\documentclass[article]{siamart1116}
\usepackage[utf8]{inputenc}
\usepackage{amssymb}
\usepackage{amsmath}
\usepackage{color}
\usepackage{tikz}
\usetikzlibrary{arrows}
\usetikzlibrary{arrows.meta}
\usetikzlibrary{patterns}
\usepackage{booktabs}
\usepackage{threeparttable}
\newsiamremark{remark}{Remark} % Remark
\newsiamremark{asmp}{Assumption} % Assumption
% Scalar products

\newcommand{\scalOmj}[1]{\left( #1 \right)_{\Omega_{m_j}}}
\newcommand{\scalOmjT}[1]{\left( #1 \right)_{\Omega_{m_j}^{T}}}
\newcommand{\scalOfT}[1]{\left( #1 \right)_{\Omega_f^T}}
\newcommand{\scalOf}[1]{\left( #1 \right)_{\Omega_f}}
% Positive and negative cut
\newcommand{\pcut}[1]{\left[#1\right]_+}
\newcommand{\ncut}[1]{\left[#1\right]_-}
% Colors %
\definecolor{color1}{rgb}{0, 0, 1}
\definecolor{color2}{rgb}{0.28235, 0.81961, 0.8}
\definecolor{color3}{rgb}{0.6, 0.19608, 0.8}
\definecolor{color4}{rgb}{1, 0.078431, 0.57647}
\definecolor{color5}{rgb}{0.64706, 0.16471, 0.16471}
\definecolor{color6}{rgb}{0.86275, 0.078431, 0.23529}
\definecolor{color7}{rgb}{1, 0.84314, 0}
\definecolor{color7star}{rgb}{1, 0.64706, 0}
\definecolor{color8}{rgb}{0, 0.50196, 0}
\definecolor{color9}{rgb}{0.18039, 0.5451, 0.34118}

% Preamble consisting of:
% Packages and macro definitions go here.
% Define title, authors, headers here.
%------- Title -----------------------------
%\title{Upscaling of fractured flow models for Richards' equation %  
\title{Rigorous upscaling of unsaturated flow in fractured porous media %  
%\thanks{ASML Inc, Veldhoven, The Netherlands} %
}
	
%-------- Authors --------------------------	
%-------- Authors --------------------------	
\author{Florian List %
\thanks {Department of Mathematics, University of Hasselt, Belgium}
%\footnotemark[4]
%$^{,}$\thanks{ASML Inc, Veldhoven, The Netherlands} %
\and
Kundan Kumar 
\thanks{Department of Mathematics, University of Bergen, Norway}
\and
Iuliu Sorin Pop 
\footnotemark[1]
$^{,}$
%\thanks {Department of Mathematics, University of Hasselt, Belgium} %
\footnotemark[2]
\and
Florin A. Radu %
\footnotemark[2]
}

%\headers{Upscaling Richards' equation in a fracture}{F. List, K. Kumar, I. S. Pop, F. A. Radu}
\headers{Upscaling of unsaturated flow in fractured porous media}{F. List, K. Kumar, I. S. Pop, F. A. Radu}

%-------- Date -----------------------------
\date{today}
%==========================================================================================
\begin{document}
\maketitle
% Other front matter goes here: abstract, keywords, AMS subjects.
%-------- Abstract -------------------------
\begin{abstract}
In this work, we consider a mathematical model for flow in a unsaturated porous medium containing a fracture. In all subdomains (the fracture and the adjacent matrix blocks) the flow is governed by Richards' equation. The submodels are coupled by physical transmission conditions expressing the continuity of the normal fluxes and of the pressures. We start by analyzing the case of a fracture having a fixed width-length ratio, called $\varepsilon > 0$. Then we take the limit $\varepsilon \to 0$ and give a rigorous proof for the convergence towards effective models. This is done in different regimes, depending on how the ratio of porosities and permeabilities in the fracture, respectively matrix scale with respect to $\varepsilon$, and leads to a variety of effective models. Numerical simulations confirm the theoretical upscaling results.
\end{abstract}
%-------------------------------------------

%-------- Keywords -------------------------
\begin{keywords}
Richards' equation, Fractured porous media, Upscaling, Unsaturated flow in porous media, Existence and uniqueness of weak solutions
\end{keywords}

%-------- Subject classification -----------
\begin{AMS}
{ 35B27, 35A35,  35J25, 35K65,}
\end{AMS}

%-------- Introduction ---------------------
\section{Introduction}	
Fractured porous media arise in a multitude of environmental and technical applications, including fragmented rocks, hydraulic fracturing, carbon dioxide sequestration, and geothermal systems. Fractures are thin formations, in which the hydraulic properties such as porosity and permeability differ significantly from those of the surrounding matrix blocks. Hence, fractures have a crucial impact on fluid flow \cite{AdlerThovert}, and fractures or entire fracture networks must be incorporated in the mathematical models for fluid flow. This is challenging from the numerical point of view, firstly due to the high geometrical complexity of fracture networks and secondly because grid cells with a high aspect ratio or a very fine grid resolution within the fractures and in the adjacent matrix region are required.  

\par In order to overcome the latter difficulty, it is appealing to embed fractures as lower-dimensional manifolds into a higher-dimensional domain (e.g. as lines in a two-dimensional domain) and thus to reduce the fracture width. Depending on the context, fractures may block or conduct fluid flow, which can be expressed for example by coupling the mathematical model for the matrix blocks with a differential equation on the lower-dimensional fractures. Herein, we prove that such models result naturally from models with positive fracture width in the limit case where the width passes to zero. The presented model in this work provides a physically-consistent fundation for discrete fracture modeling approaches (e.g. \cite{Durlofsky, Hadi}).

\par We consider a two-dimensional model for unsaturated fluid flow in a fractured porous medium. For the ease of presentation, the geometry is given by two rectangular matrix blocks, separated by a single fracture. Here we assume that, next to the matrix blocks, the fracture is a porous medium too, as encountered e.g. in the case of sediment-filled fractures \cite{HoughEtAl}, or layered porous media \cite{NeuweilerEichel}. We assume that the pore space of the porous medium is filled with a liquid (say, water) and air. Provided that the domain is interconnected and connected to the surface, the assumption that the air is infinitely mobile is justified, and the air pressure can be set to zero in the full two-phase model. In this way, the governing model in the matrix blocks and in the fracture is the Richards equation \cite{Richards}, 
\begin{equation}
\partial_t (\phi S (\psi)) - \nabla \cdot (K_a K(S(\psi)) \nabla \psi) = f. 
\label{eq:richards}
\end{equation}
Here, $\psi$ denotes the pressure head, $\phi$ the porosity of the medium, $S$ the water saturation, $K_a$ and $K$ stand for the absolute, respective relative hydraulic conductivity, and $f$ is a source or sink term. %$phi$ is a parameter that depends on the medium. The water saturation $S$ is a given function of $\psi$.  %Alternatively, one can express the  Richards equation in terms of the water content $\theta = \phi S$. 
For simplicity, the gravity is neglected, and the absolute permeability is a scalar, but all the results in this paper can be extended to include gravity, or anisotropic media.

%Richards' equation can be split up into an equation for the mass conservation and Darcy's law for unsaturated flow, which provides a suitable form for the momentum conservation at the Darcy scale {\color{red} reference}. Gravitational effects are neglected in this work. \\
\par In \eqref{eq:richards}, $\phi$ and $K_a$ are medium-dependent parameters. Similarly, the water saturation $S$ is a given, increasing function of $\psi$, whereas the relative conductivity $K$ is a given function of $S$. As for $\phi$ and $K_a$, these relationships depend on the type of the material in the porous medium. Therefore, all these material properties may be different in the fracture and in the matrix blocks, see e.g. \cite{Helmig}. Well-known are the van Genuchten--Mualem \cite{vanGenuchten} and Brooks--Corey \cite{BrooksCorey} relationships. 

The Richards equation is a non-linear parabolic partial differential equation and may degenerate wherever the flow is saturated $S'(\psi) = 0$ (the fast diffusion case) or $K(S(\psi)) \to 0$ (the slow diffusion case). However, the rigorous mathematical results in this work only cover non-degenerate cases, when the medium is strictly unsaturated. On the other hand, the effective models derived here remain formally valid also in the degenerate cases. 

In view of its practical relevance, the Richards equation has been investigated thoroughly in the mathematical literature. Without being exhaustive, we mention \cite{AltLuckhaus,AltEtAl,vanDuijnPeletier} for results concerning the existence of weak solutions including degenerate cases. Uniqueness results are obtained in e.g. \cite{Otto1, Otto2}. 
%In the saturated regime, i.e. $\psi \geq 0$, $K$ and $S$ are constant, whereas in the unsaturated regime, i.e. $\psi < 0$, $K$ and $S$ are non-linear, monotonically increasing functions. Hence,  Moreover, Richards' equation degenerates when $K(S(\psi)) \to 0$ (slow diffusion case). However,  The non-linearity and degeneracy of Richards' equation generally yield low regularity of solutions and make the analysis difficult. 
%\par 
%Without aiming to be exhaustive, but for the sake of completeness 
The numerical methods are developed in agreement with the analytical results. One remarkable feature is that when compared to the case of the heat equation, the solutions to the Richards equation lack regularity. For this reason, as well as for ensuring stability, the implicit Euler scheme is commonly used for the time discretisation. The outcome is a sequence of time discrete nonlinear elliptic equations, which are generally solved by means of linear iterative schemes like Newton, fixed-point or Picard. Such methods are discussed and compared e.g. in \cite{ListRadu}. For the spatial discretisation, we mention \cite{EymardEtAl1999, EymardEtAl2006, KlausenEtAl}  where finite volume approaches are presented,  \cite{ArbogastEtAl1996,  RaduEtAl2004, PopLin2004, RaduNumMath2008, WoodwardDawson} for mixed finite element methods, and \cite{Ebmeyer, NochettoVerdi} for finite element schemes.   

In all papers mentioned above, the parameters and nonlinearities are either fixed over the entire domain, or vary smoothly. In other words, the problems can be considered over the entire domain, without paying particular attention to the fact that there are different media involved. In the present work, the medium consists of different homogeneous blocks, connected through transmission conditions that will be given below. In this context, domain decomposition methods represent an efficient way to reduce both the problem complexity, and to deal with the occurrence of different homogeneous blocks. We refer to \cite{BerningerDiss} for a domain decomposition scheme applied to unsaturated flows in layered solis, and to \cite{seus} for a scheme combining linearization and domain decomposition techniques in each iteration. 

The present work is considering a particular situation, where the medium consists of two homogeneous blocks, separated by a thin, homogeneous structure, the fracture. We consider a two-dimensional situation, and let $\varepsilon >0$ be a dimensionless number giving the ratio between the fracture width and length. Since the fracture is assumed thin, $\varepsilon$ can be seen as a small parameter. If fractures are viewed as two-dimensional objects, their discretization becomes complex as the mesh should either contain anisotropic elements respecting the fracture shape, or should be extremely fine. To avoid such issues, one possibility is to approximate fractures as lower dimensional elements in the entire domain. This implies finding appropriate, reduced dimensional models for the fracture, and how these are connected to the models in the matrix blocks. In this sense, we mention \cite{Andersen, Jaffre}, where the reduced dimensional models for two-phase flow, respective reactive transport in fractured media are derived by formal arguments based on a transversal averaging of the model inside fractures. Similar results, but using anisotropic asymptotic expansion methods in terms of $\varepsilon$ are obtained in \cite{vanDuijnPop, Showalter1, Showalter2, PopBogersKumar2016}, where the convergence of the averaging process is proved rigorously when $\varepsilon \searrow 0$. We also refer to \cite{Brenner, Boon, Alessio, Alessio1}, where reduced dimensional models for flow in fractured media are presented with emphasis on developing appropriate numerical schemes. 

The models considered here are assuming that the pressure is continuous at the interfaces separating the matrix blocks and the fracture. In other words, entry pressure models leading to the extended pressure condition derived in \cite{vanDuijndeNeef} are disregarded. For such models we also mention that homogenization results are obtained in \cite{popvanduijntrapping, Henning, schweizer, Adam}. In particular, oil trapping effects are well explained by such models. 

Although the pressure is assumed continuous at the interfaces separating the homogeneous blocks, this does not rule out the situation where the averaged pressure across a fracture may still become discontinuous in the reduced dimensional models. Such models are discussed e.g. in \cite{Brenner}. The present analysis does not cover such cases, but we refer to \cite{ListThesis} for the formal derivation of such models in the specific context discussed here. 

Still referring to fractured media, but with a different motivation, are the works in \cite{wick} for a phase field model describing the propagation of fluid filled fractures and \cite{Girault} for iterative approaches to static fractures.

%\par The numerical treatment of Richards' equation has been extensively studied. For the time discretisation, the implicit Euler scheme is the preferred method due to its unconditional stability for large time steps and the low regularity of the solution. In contrast, a multitude of spatial discretisations can be applied to Richards' equation: finite volume schemes have been used in \cite{EymardEtAl1999, EymardEtAl2006, KlausenEtAl}, mixed finite element methods have been studied \cite{ArbogastEtAl1996, BauseKnabner, RaduEtAl2004, RaduWang, WoodwardDawson, Yotov}, and Galerkin finite element methods have been considered in \cite{Arbogast, Ebmeyer, NochettoVerdi}. For the present setting, domain decomposition methods based on mortar technique \cite{Mortar, Boon, Alessio} can be profitably used.  For comparisons of different linearisation methods, we refer to \cite{LehmannAckerer, ListRadu}.  For the dimensional reduction of the fractures to interfaces, we refer to the works of \cite{Jaffre, Alessio, Showalter1, Showalter2}.  %For comparisons of different linearisation methods, we refer to \cite{LehmannAckerer, ListRadu}. \\
%, Slodicka

\par The main goal in this work is to give a mathematically rigorous derivation of the reduced dimensional models in fractured media. Based on anisotropic asymptotic expansion, we give rigorous proofs for the convergence of averaging procedure when passing $\varepsilon$, the ratio between the fracture width and length, to zero. Depending on how the ratio of the porosities and of the absolute permeabilities in the different types of materials scale w.r.t. $\varepsilon$, five different reduced dimensional models are obtained. More precisely, if the fracture is more permeable than the adjacent blocks, it becomes a preferential flow path. On the contrary, if the fracture is less permeable than the blocks, the fluid will have a preference to flow in the blocks. In consequence, the reduced dimensional fracture equation for the fracture can be an interface condition or a differential equation. Such results are obtained by means of a formal derivation in \cite{Ahmed, Jaffre}. Our approach is in spirit of \cite{Tunc, Showalter1, Showalter2}, where the single phase flow through a highly permeable fracture is considered. This corresponds to a particular choice of the scalings in the porosity, respectively absolute permeability ratio. The key mathematical challenge in this work is to obtain estimates specifying the explicit dependence on $\varepsilon$. The starting mathematical equations have coefficients that depend on $\varepsilon$ and the major part of the work is in identifying the dependence of the estimates on  $\varepsilon$. 

\par The outline of this work is as follows. In Section \ref{sec:model}, the coupled model is formulated, and a non-dimensionalisation procedure is carried out in order to derive a dimensionless model, which is then used for the upscaling. Two scaling parameters, $\kappa$ and $\lambda$, are introduced. These account for the scaling of the porosities and absolute hydraulic conductivities with respect to $\varepsilon$. In Section \ref{sec:main_result}, we briefly state the main results of this work.  Section \ref{sec:existence} is concerned with the existence of solutions to the model for a constant but positive fracture width, i.e. $\varepsilon > 0$. This is done by applying Rothe's method (see e.g. \cite{Kacur}).  Based on compactness arguments we prove the existence of solutions  to the coupled model. Further, to reduce the dimensionality of the fracture, we investigate the limit of vanishing fracture width, that is $\varepsilon \to 0$ in Section \ref{sec:upscaling}. Section \ref{sec:simulations} presents numerical simulations that confirm our theoretical upscaling results. 

%As an example, the case ${\color{red} \kappa = \lambda = -1}$ will be presented, but notice that for other choices of $\kappa$ and $\lambda$, convergence towards different effective models can be proven along the same lines. For this particular choice of $\kappa$ and $\lambda$, the one-dimensional Richards equation describes the flow along the fracture in the effective model. \\
% which strongly differ from each other: the fracture can become impermeable or remain permeable, and the subdomains on both sides of the fracture may completely decouple or stay connected. 
% 
%{\color{red} In this work, we consider the cases where the flux within the fracture is sustained as the fracture width disappears. }
%{\color{red} A detailed numerical investigation of the upscaling for different cases will be presented in a different publication.}

%-------- Model ----------------------------
\section{Model}
\label{sec:model}
First, we formulate the model in dimensional form. Thereafter, we introduce reference quantities and make assumptions on their scaling with respect to one another. This is where the scaling parameters $\kappa$ and $\lambda$ come into play. By relating the dimensional quantities to the reference quantities, the non-dimensional model is derived, which will be considered in the subsequent sections. 
%-------- Dimensional model ----------------------------
\subsection{Dimensional model}
We resort to a simple two-dimensional geometry consisting of two square solid matrix blocks with edge length $L$ separated by a fracture of width $l$. The geometry is illustrated in Figure \ref{fig:Figure_dimensions} (left).
%%%%%%%%%%%%% Figure %%%%%%%%%%%%%%%
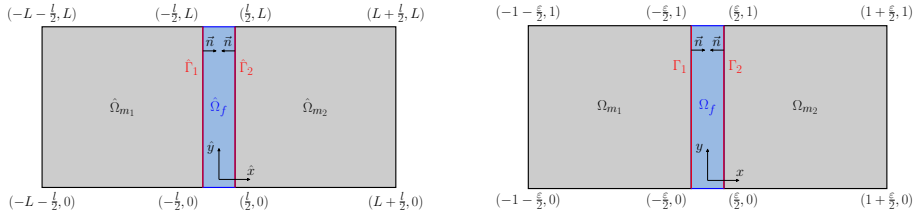
\begin{figure}[!h]
\begin{minipage}{0.49\textwidth}
\begin{center}
\resizebox{0.9\textwidth}{!}{%
\definecolor{myblue}{rgb}{0.6,0.73,0.89}
\begin{tikzpicture}[
pile/.style={thick, ->, >=stealth', shorten <=0pt, shorten
    >=2pt}]
{\Huge
\draw [line width = 2pt, fill=black!20] (-11,0) rectangle (-1,10);
\draw [line width = 2pt, fill=black!20] (1,0) rectangle (11,10);
\draw [blue, line width = 2pt, fill=myblue] (-1,0) rectangle (1,10);
\node [below](a) at (-11,0) {$(-L-\frac{l}{2},0)$};
\node [below left](b1) at (-1,0) {$(-\frac{l}{2},0)$};
\node [below](b) at (-1,0) {};
\node [below right](c1) at (1,0) {$(\frac{l}{2},0)$};
\node [below](c) at (1,0) {};
\node [below](d) at (11,0) {$(L+\frac{l}{2},0)$};
\node [above](e) at (-11,10) {$(-L-\frac{l}{2},L)$};
\node [above left](f1) at (-1,10) {$(-\frac{l}{2},L)$};
\node [above](f) at (-1,10) {};
\node [above right](g1) at (1,10) {$(\frac{l}{2},L)$};
\node [above](g) at (1,10) {};
\node [above](h) at (11,10) {$(L+\frac{l}{2},L)$};
\node(i) at (-6,5) {$\hat{\Omega}_{m_1}$};
\node(j) at (6,5) {$\hat{\Omega}_{m_2}$};
\node(k) [blue] at (0,5) {$\hat{\Omega}_f$};
\draw [red, line width = 2pt] (b) -- (f);
\draw [red, line width = 2pt] (c) -- (g);
\node(l) [red, left] at (-1,7.5) {$\hat{\Gamma}_1$};
\node(m) [red, right] at (1,7.5) {$\hat{\Gamma}_2$};
\draw[pile] (0.0,0.5) -- (0.0,2.5) node [pos = 1, left] {$\hat{y}$};
\draw[pile] (0.0,0.5) -- (2.0,0.5) node [pos = 1, above] {$\hat{x}$};
\draw[pile] (-1.0,8.5) -- (-0.1,8.5) node[pos = 0.5, above] {$\vec{n}$};
\draw[pile] (1.0,8.5) -- (0.1,8.5) node[pos = 0.5, above] {$\vec{n}$};
}
\end{tikzpicture}
}
\end{center}
\end{minipage}
\begin{minipage}{0.49\textwidth}
\begin{center}
\resizebox{0.9\textwidth}{!}{%
\definecolor{myblue}{rgb}{0.6,0.73,0.89}
\begin{tikzpicture}[
pile/.style={thick, ->, >=stealth', shorten <=0pt, shorten
    >=2pt}]
{\Huge
\draw [line width = 2pt, fill=black!20] (-11,0) rectangle (-1,10);
\draw [line width = 2pt, fill=black!20] (1,0) rectangle (11,10);
\draw [blue, line width = 2pt, fill=myblue] (-1,0) rectangle (1,10);
\node [below](a) at (-11,0) {$(-1-\frac{\varepsilon}{2},0)$};
\node [below left](b1) at (-1,0) {$(-\frac{\varepsilon}{2},0)$};
\node [below](b) at (-1,0) {};
\node [below right](c1) at (1,0) {$(\frac{\varepsilon}{2},0)$};
\node [below](c) at (1,0) {};
\node [below](d) at (11,0) {$(1+\frac{\varepsilon}{2},0)$};
\node [above](e) at (-11,10) {$(-1-\frac{\varepsilon}{2},1)$};
\node [above left](f1) at (-1,10) {$(-\frac{\varepsilon}{2},1)$};
\node [above](f) at (-1,10) {};
\node [above right](g1) at (1,10) {$(\frac{\varepsilon}{2},1)$};
\node [above](g) at (1,10) {};
\node [above](h) at (11,10) {$(1+\frac{\varepsilon}{2},1)$};
\node(i) at (-6,5) {$\Omega_{m_1}$};
\node(j) at (6,5) {$\Omega_{m_2}$};
\node(k) [blue] at (0,5) {$\Omega_f$};
\draw [red, line width = 2pt] (b) -- (f);
\draw [red, line width = 2pt] (c) -- (g);
\node(l) [red, left] at (-1,7.5) {$\Gamma_1$};
\node(m) [red, right] at (1,7.5) {$\Gamma_2$};
\draw[pile] (0.0,0.5) -- (0.0,2.5) node [pos = 1, left] {$y$};
\draw[pile] (0.0,0.5) -- (2.0,0.5) node [pos = 1, above] {$x$};
\draw[pile] (-1.0,8.5) -- (-0.1,8.5) node[pos = 0.5, above] {$\vec{n}$};
\draw[pile] (1.0,8.5) -- (0.1,8.5) node[pos = 0.5, above] {$\vec{n}$};
}
\end{tikzpicture}
}
\end{center}
\label{fig:Figure}
\end{minipage}
\caption{Dimensional (left) and dimensionless (right) geometry of the fracture and the surrounding matrix blocks}
\label{fig:Figure_dimensions}
\end{figure} \\
The subscripts $m$ and $f$ indicate the matrix blocks and the fracture, respectively. They are defined as
\begin{equation}
\begin{aligned}
\hat{\Omega}_{m_1} &:= \left(-\frac L 2 - \frac l 2, - \frac l 2\right) \times (0,L), &\qquad
\hat{\Gamma}_1 &:= \left\lbrace -\frac l 2 \right\rbrace \times (0,L), &\\
\hat{\Omega}_{m_2} &:= \left(\frac l 2, \frac L 2 + \frac l 2\right) \times (0,L), &\qquad
\hat{\Gamma}_2 &:= \left\lbrace \frac l 2 \right\rbrace \times (0,L), &\\
\hat{\Omega}_f &:= \left(-\frac l 2, \frac l 2\right) \times (0,L). &
\end{aligned}
\end{equation}
We use superscript hats for denoting quantities associated with the dimensional model in order to distinguish them from the dimensionless quantities which will be introduced subsequently. \\
The model defined on the dimensional geometry is given by\\[1ex]
%\begin{equation*}
\text{Problem } $\mathcal{P}_D$:
\begin{equation*}
\left\lbrace
\begin{aligned}
\partial_{\hat{t}} (\phi_m \hat{S}_m(\hat{\psi}_{m_j})) + \hat{\nabla} \cdot \hat{v}_{m_j} &= \hat{f}_{m_j}& \qquad &\text{in } \hat{\Omega}_{m_j}^{\hat{T}}, \\
\hat{v}_{m_j} &= - \hat{K}_{a, m}\hat{K}_m(\hat{S}_m(\hat{\psi}_{m_j})) \hat{\nabla} \hat{\psi}_{m_j}& \qquad &\text{in } \hat{\Omega}_{m_j}^{\hat{T}}, \\
\partial_{\hat{t}} (\phi_f \hat{S}_f(\hat{\psi}_f)) + \hat{\nabla} \cdot \hat{v}_f &= \hat{f}_f& \qquad &\text{in } \hat{\Omega}_f^{\hat{T}}, \\
\hat{v}_f &= - \hat{K}_{a, f} \hat{K}_f(\hat{S}_f(\hat{\psi}_f)) \hat{\nabla} \hat{\psi}_f& \qquad &\text{in } \hat{\Omega}_f^{\hat{T}}, \\
%\hat{\psi}_\rho &= 0& \qquad &\text{on } (\partial \hat{\Omega}_\rho \setminus \hat{\Gamma})^{\hat{T}}, \\
\hat{v}_{m_j} \cdot \vec{n} &= \hat{v}_f \cdot \vec{n}& \qquad &\text{on } \hat{\Gamma}_j^{\hat{T}}, \\
\hat{\psi}_{m_j} &= \hat{\psi}_f& \qquad &\text{on } \hat{\Gamma}_j^{\hat{T}}, \\
\hat{\psi}_\rho(0) &= \hat{\psi}_{\rho,I}& \qquad &\text{in } \hat{\Omega}_\rho,
\end{aligned}
\right.
\end{equation*}
for $\rho \in \lbrace m_1, m_2, f \rbrace$, $j \in \lbrace 1, 2 \rbrace $, where $\hat{\Omega}_m := \hat{\Omega}_{m_1} \cup \hat{\Omega}_{m_2}$, and where we set $\Omega^{\hat{T}} := \Omega \times (0,\hat{T}]$ for all spatial domains $\Omega$ and a given final time $\hat{T} > 0$. Furthermore, $\vec{n}$ is a normal vector pointing from $\hat{\Omega}_{m_j}$ into $\hat{\Omega}_f$. $\hat{S}_\rho$, $\hat{v}_\rho$, $\hat{\psi}_\rho$, $\hat{K}_{a, \rho}$, $\hat{K}_\rho$ are the saturation, flux, pressure height,  absolute and relative hydraulic conductivities in the subdomain $\Omega_\rho$, respectively.  $\hat{\psi}_{\rho,I}$ is a given initial condition and $\hat{f}_\rho$ is a source/sink term. 
\\
In words, the Richards equation is modeling the flow in the fracture and in the matrix blocks, supplemented with the continuity of the normal flux and of the pressure as transmission conditions, and initial conditions. %and homogeneous Dirichlet boundary conditions, which are chosen for simplicity. 
%{\color{red} Note that Problem $\mathcal{P}_\varepsilon$ must be complemented with boundary conditions on $(\partial \hat{\Omega}_\rho \setminus (\hat{\Gamma}_1 \cup \hat{\Gamma}_2))^{\hat{T}}$.} The continuity of the flux across the interfaces accounts for the conservation of mass, and the continuity of the pressures can be derived from the conservation of momentum (see e.g. \cite{HassanizadehGray}).
%-------- Non-dimensionalisation ----------------------------
\subsection{Non-dimensionalisation}
We define $\varepsilon := \frac{l}{L},$ that is, the ratio of the fracture width to its length.   We take $L$ as the reference length scale. The dimensionless geometry is as shown in Figure \ref{fig:Figure} (right):
\begin{equation}
\begin{aligned}
\Omega_{m_1} &= \left(-\frac 1 2 - \frac \varepsilon 2, - \frac \varepsilon 2\right) \times (0,1), &\qquad
\Gamma_1 &= \left\lbrace -\frac \varepsilon 2 \right\rbrace \times (0,1), &\\
\Omega_{m_2} &= \left(\frac \varepsilon 2, \frac 1 2 + \frac \varepsilon 2\right) \times (0,1), &\qquad
\Gamma_2 &= \left\lbrace \frac \varepsilon 2 \right\rbrace \times (0,1), &\\
\hat{\Omega}_f &= \left(-\frac \varepsilon 2, \frac \varepsilon 2\right) \times (0,1). &
\end{aligned}
\end{equation}
%%%%%%%%%%%%% Figure %%%%%%%%%%%%%%%
%\begin{figure}[!h]
%\begin{center}
%\resizebox{0.6\textwidth}{!}{%
%\input{Figures/Geometry.tex}
%}
%\end{center}
%\caption{Dimensionless geometry of the fracture and the surrounding matrix blocks}
%\label{fig:Figure}
%\end{figure} \\
Since the pressure is continuous at the interfaces, we define a single reference pressure head for the entire domain, $\bar{\psi} = L$. We further assume that the matrix blocks have the same properties. Consequently, only two absolute hydraulic conductivities are encountered, $\bar{K}_m$ and $\bar{K}_f$, respectively. As reference time scale we set
\begin{equation}
\bar{T} := \frac{\phi_m L^2}{\hat{K}_m \bar{\psi}} = \frac{\phi_m L}{\hat{K}_m}.
\label{eq:bar_T}
\end{equation}
The dimensionless pressure heads are then given as $\psi_{m_j} = \hat{\psi}_{m_j}/L$ and $\psi_f = \hat{\psi}_f/L$, the dimensionless time as $t = \hat{t}/\bar{T}$, and the final time as $T = \hat{T}/\bar{T}$.  As regards the source terms, we set $f_{m_j} = \hat{f}_{m_j} \bar{T}/\phi_m$ and $f_{f} = \hat{f}_{f} \bar{T}/\phi_m$. 

The functions $\hat{S}_\rho$ and $\hat{K}_\rho$ (where $\rho \in \lbrace m_1, m_2, f \rbrace$) are dimensionless. Expressed in terms of dimensionless arguments, they become ${S}_\rho$ and ${K}_\rho$. 
%We define dimensionless hydraulic conductivities $K_m = \hat{K}_m/\bar{K}_m$ and $K_f = \hat{K}_f/\bar{K}_f$. 
\\
Using the Darcy law in the mass balance equation in Problem $\mathcal{P}_D$ and using \eqref{eq:bar_T} one gets the dimensionless equations for the matrix blocks ($j \in \{1, 2\}$)
\begin{equation}
 \partial_t S_m(\psi_{m_j}) - \nabla \cdot \left(K_m(S_m(\psi_{m_j})) \nabla \psi_{m_j} \right) = f_{m_j} \quad \text{in } \Omega_{m_j}^T.
\end{equation}

As announced in the introduction, the results depend on the types of materials in the blocks and in the fractures. More exactly, important is how the ratio of the porosities and of the absolute hydraulic conductivities in the fracture and the matrix blocks are scaling w.r.t. $\varepsilon$, 
\begin{equation}
\frac{\phi_f}{\phi_m} \varpropto \varepsilon^\kappa, \quad \text{and} \quad \frac{\hat{K}_{a, f}}{\hat{K}_{a, m}} \varpropto \varepsilon^\lambda. 
\label{eq:scaling}
\end{equation}
Here $\kappa, \lambda \in \mathbb{R}$ are scaling parameters. For the ease of notation, we take the constants of proportionality to be one for the analysis.
%Note that the porosity affects the water content, which is defined as the product of porosity and saturation $S$, to wit $S = \phi S$. \\
Using this and applying the same ideas as above, the model in the fracture becomes 
\begin{equation}
\partial_t (\varepsilon^\kappa S_f(\psi_{f})) - \nabla \cdot \left(\varepsilon^\lambda K_f(S_f(\psi_{f})) \nabla \psi_{f} \right) = f_{f} \quad \text{in } \Omega_{f}^T.
\end{equation}
The transmission condition for the normal flux transforms into
\begin{equation}
K_m(S_m(\psi_{m_j})) \nabla \psi_{m_j} \cdot \vec{n} = \varepsilon^\lambda \, K_f(S_f(\psi_{f})) \nabla \psi_{f} \cdot \vec{n} \qquad \text{on } \Gamma_j^T.
\end{equation}
%For the sake of simplicity, we slightly abuse notation and write $S_m = \phi_m S_m$ and $S_f = \phi_m S_f$ for the analysis. \\
In what follows, the dimensionless fracture width $\varepsilon > 0$ is a model parameter. Given $\varepsilon > 0$, the dimensionless model becomes
\begin{equation*}
\text{Problem } \mathcal{P}_\varepsilon:
\left\lbrace
\begin{aligned}
\partial_t S_m(\psi^\varepsilon_{m_j}) + \nabla \cdot v^\varepsilon_{m_j} &= f_{m_j}& \qquad &\text{in } \Omega_{m_j}^T, \\
v^\varepsilon_{m_j} &= - K_m(S_m(\psi^\varepsilon_{m_j})) \nabla \psi^\varepsilon_{m_j}& \qquad &\text{in } \Omega_{m_j}^T, \\
\partial_t(\varepsilon^\kappa S_f(\psi^\varepsilon_f)) + \nabla \cdot v^\varepsilon_f &= f_f^\varepsilon& \qquad &\text{in } \Omega_f^T, \\
v^\varepsilon_f &= - \varepsilon^\lambda K_f(S_f(\psi^\varepsilon_f)) \nabla \psi^\varepsilon_f& \qquad &\text{in } \Omega_f^T, \\
%\psi^\varepsilon_\rho &= 0& \qquad &\text{on } (\partial \Omega_\rho \setminus \Gamma)^T, \\
v^\varepsilon_{m_j} \cdot \vec{n} &= v^\varepsilon_f \cdot \vec{n}& \qquad &\text{on } \Gamma_j^T, \\
\psi^\varepsilon_{m_j} &= \psi^\varepsilon_f& \qquad &\text{on } \Gamma_j^T, \\
\psi^\varepsilon_\rho(0) &= \psi_{\rho,I}& \qquad &\text{in } \Omega_\rho.
\end{aligned}
\right.
\end{equation*}
\begin{remark}[Scaling parameters]
The scaling parameters $\kappa, \lambda \in \mathbb{R}$ will be crucial in determining the effective models in the limit $\varepsilon \to 0$. $\kappa$ is related to the storage capacity of the fracture: for $\kappa < 0$, the reference porosity of the fracture increases for decreasing $\varepsilon$ as compared to the reference porosity of the matrix blocks. For $\kappa \leq -1$, the fracture maintains its ability to store water as $\varepsilon$ approaches zero. For $\kappa = 0$, no scaling occurs, and for $\kappa > 0$, the storage ability of the fracture decreases for $\varepsilon \to 0$ due to the decline of both the fracture volume (assuming fixed $L$) and of the fracture porosity. 

\par The parameter $\lambda$ instead gives the scaling of the conductivities. Here we consider the case $\lambda <1$. $\lambda < 0$ corresponds to the case of a highly conductive fracture when compared to the matrix, which means that the flow through the fracture is more rapid. Whenever $\lambda > 0$ the fractures are less permeable than the blocks. The case $\lambda = 0$ means comparable conductivities. The case   $\lambda \geq 1$ corresponds to impermeable fractures, leading in the limit $\varepsilon \rightarrow 0$ to models where the pressures at the matrix block at each side of the fractures are discontinuous (see \cite{ListThesis}).  To analyze such cases rigorously, one can employ techniques that are similar to ones used in \cite{Maria}, which are different from those used here. Accordingly, we only restrict to the case when $\lambda < 1$. 

%The parameter $\lambda$ regulates the scaling of the flux term: for $\lambda < 0$, the reference hydraulic conductivity of the fracture grows for decreasing $\varepsilon$, assuming a fixed reference hydraulic conductivity of the matrix blocks. Thus, the flow along the fracture can be sustained when $\varepsilon$ goes to zero for $\lambda$ being sufficiently small, namely $\lambda \leq 1$. The choice $\lambda = 0$ implies no scaling of the flux term in the fracture, and for $\lambda \geq 1$, the fracture becomes so impervious that a discontinuity in pressure arises at the fracture in the effective models. \\
%{\color{red} We don't prove these statements for $\lambda > -1$ -- shall I remove them?}
\end{remark}
% Main results
\section{Main result}
\label{sec:main_result}
%\subsection{Effective models}
Our main result is the rigorous derivation of effective models replacing the fracture by an interface. Table \ref{table:model_summary} provides a brief summary of the effective models for the entire range $(\kappa, \lambda) \in [-1,\infty) \times (-\infty, 1) $ except for the case when $\kappa = -1, \lambda \in (-1,1)$. Due to the non-linearity of the time derivative term involved, the identification of the limit requires stronger estimates than we have. Therefore, this case is left unresolved. %{\color{red} The strong and weak formulations of the effective models for the range $(\kappa, \lambda) \in [-1,\infty) \times (-\infty, -1]$ can be found in the appendix.} \\ % Figure \ref{fig:scaling_parameters} links the effective models to the respective values of $\kappa$ and $\lambda$. 
% We state the effective models for the cases $\kappa = \lambda = -1$ and $\kappa = \lambda = 0$ in the strong formulation in physical variables and the corresponding weak formulation in Kirchhoff transformed variables. 
%Figure \ref{fig:scaling_parameters} links the resulting effective models to the respective values of $\kappa$ and $\lambda$, and Table \ref{table:model_summary} provides a brief summary of each model. 
%{\color{red} The strong and weak formulations of the effective models for the entire range $(\kappa, \lambda) \in [-1,\infty) \times (-\infty, \infty)$ can be found in the appendix.} 
%------ Table -------%
\scriptsize{
\begin{center}
\begin{threeparttable}
\begin{tabular}{@{}lllc@{}}
\toprule
                    & Fracture equation                            						 & Parameter range for $(\kappa, \lambda)$ \\
\midrule 
{\color{color1} Effective model I}  			 & Richards' equation                                 & $\lbrace -1 \rbrace \times \lbrace -1 \rbrace$ \\
{\color{color2} Effective model II} 			 & Elliptic equation                           		  & $(-1, \infty) \times \lbrace -1 \rbrace$ \\
{\color{color3} Effective model III } & ODE for spatially constant pressure                & $\lbrace -1 \rbrace \times (-\infty, -1)$ \\
{\color{color4} Effective model IV \tnote{\color{black} }}  & Spatially constant pressure                        & $(-1, \infty) \times (-\infty, -1)$             				         \\
{\color{color5}Effective model V}			& Pressure and flux continuity between matrix blocks         					 &  $(-1, \infty) \times ( -1, 1)$ \\

\bottomrule
\end{tabular}

%\begin{tablenotes}  \tnote{\color{black} 1}
%    \item[\tnote{1}] no flow boundary conditions required in the fracture
%\end{tablenotes}
\vspace{0.1cm}
\caption{Summary of the effective models}
\label{table:model_summary}
\end{threeparttable}
\end{center}
}
\normalsize
We state the strong formulation of the effective models for $(\kappa,\lambda) \in [-1,\infty) \times (-\infty, 1)$ except $\kappa = -1, \lambda \in (-1,1)$.
\subsection{Effective models: strong formulation}
Effective model I consists of Richards' equation in the matrix block subdomains and the one-dimensional Richards' equation in the fracture. It occurs for $\kappa = \lambda = -1$. \\

\par {\color{color1}\text{Effective model I}}:
\begin{equation}
\left\lbrace
\begin{aligned}
\partial_t S_m(\Psi_{m_j}) - \nabla \cdot \left(K_m(S_m(\Psi_{m_j})) \nabla \psi_{m_j}\right) &= f_{m_j},& \qquad &\text{in } \Omega_{m_j}^T, \\
\partial_t S_f(\Psi_f) - \partial_y \left(K_f(S_f(\Psi_f)) \partial_y \Psi_f\right) &= \left[\vec{q}_m\right]_\Gamma, &\qquad &\text{on } \Gamma^T, \\
\Psi_{m_j} &= \Psi_f, &\qquad &\text{on } \Gamma^T, \\
%\Psi_{m_j} &= 0, \qquad &\text{on } (\partial\Omega_{m_j} \setminus \Gamma)^T, \\
%\Psi_f &= 0, \qquad &\text{on } (\partial\Gamma)^T, \\
\Psi_{m_j}(0) &= \Psi_{m_j,I}, & \qquad &\text{on } \Omega_{m_j}, \\
\Psi_{f}(0)   &= \Psi_{f,I},  & \qquad  &\text{on } \Gamma,
\end{aligned}
\right.
\end{equation}
where 
\begin{equation}	
\left[\vec{q}_m\right]_\Gamma := \left(K_m(S_m(\Psi_{m_1}))\nabla \Psi_{m_1} \cdot \vec{n}_{m_1} + K_m(S_m(\Psi_{m_2}))\nabla \Psi_{m_2} \cdot \vec{n}_{m_2}\right)\big{|}_\Gamma
\end{equation} 
is the flux difference between the two solid matrix subdomains acting as a source term for Richards' equation in the fracture (note that $\vec{n}_{m_1} = -\vec{n}_{m_2})$. \\
If the porosity ratio does not change with vanishing fracture width and the permeability ratio is taken to be reciprocally proportional to the fracture width, i.e. $\kappa > -1$ and $\lambda = -1$, one ends up with an effective model consisting of  Richards' equation in the matrix blocks and a stationary elliptic equation in the fracture.\\

\par{\color{color2}\text{Effective model II}}:
\begin{equation}
\left\lbrace
\begin{aligned}
\partial_t S_m(\Psi_{m_j}) - \nabla \cdot \left(K_m(S_m(\Psi_{m_j})) \nabla \Psi_{m_j}\right) &= f_{m_j}, \qquad &\text{in } \Omega_{m_j}^T, \\
-\partial_y \left(K_f(S_f(\Psi_f)) \partial_y \Psi_f\right) &= \left[\vec{q}_m\right]_\Gamma, \qquad &\text{on } \Gamma^T, \\
\Psi_{m_j} &= \Psi_f, \qquad &\text{on } \Gamma^{T}, \\
%\Psi_{m_j} &= 0, \qquad &\text{on } (\partial\Omega_{m_j} \setminus \Gamma)^T, \\
%\Psi_f &= 0, \qquad &\text{on } \partial\Gamma^T, \\
\Psi_{m_j}(0) &= \Psi_{m_j,I}, &\text{on } \Omega_{m_j}.
\end{aligned}
\right.
\end{equation}
For $\kappa = -1$ and $\lambda < -1$, the pressure in the fracture becomes spatially constant in the effective model, and due to the pressure continuity, acts as the boundary condition for  the pressure in the matrices.\\

\par{\color{color3}\text{Effective model III}}:
\begin{equation}
\left\lbrace
\begin{aligned}
\partial_t S_m(\Psi_{m_j}) - \nabla \cdot \left(K_m(S_m(\Psi_{m_j})) \nabla \Psi_{m_j}\right) &= f_{m_j}, \hspace{2.5cm} \text{in } \Omega_{m_j}^T, \\
\Psi_f(t,y) &= \Psi_f(t), \hspace{2.3cm} \text{on } \Gamma^T, \\
\partial_t S_f(\Psi_f)(t) &= \int_0^1 \left[\vec{q}_m\right]_\Gamma \ dy, \hspace{1.3cm} \text{on } \Gamma^T, \\
\Psi_{m_j} &= \Psi_f, \hspace{2.75cm} \text{on } \Gamma^{T}, \\
%\Psi_{m_j} &= 0, \hspace{1.6cm} \text{on } (\partial\Omega_{m_j} \setminus \Gamma)^T, \\
\Psi_{m_j}(0) &= \Psi_{m_j,I}, \hspace{2.2cm} \text{on } \Omega_{m_j}, \\
\Psi_{f}(0)   &= \Psi_{f,I}, \hspace{2.7cm} \text{on } \Gamma.
\end{aligned}
\right.
\end{equation}
For $\kappa > -1$ and $\lambda < -1$, the pressure in the fracture takes a constant value at each time, in such a way that the total flux across the fracture is conserved:\\

\par{\color{color4}\text{Effective model IV}}:
\begin{equation}
\left\lbrace
\begin{aligned}
\partial_t S_m(\Psi_{m_j}) - \nabla \cdot \left(K_m(S_m(\Psi_{m_j})) \nabla \Psi_{m_j}\right) &= f_{m_j}, \qquad &\text{in } \Omega_{m_j}^T, \\
\Psi_f(t,y) &= \Psi_f(t), \qquad &\text{on } \Gamma^T, \\
\int_0^1 \left[\vec{q}_m\right]_\Gamma \ dy &= 0, \qquad &\text{on } \Gamma^T, \\
\Psi_{m_j} &= \Psi_f, \qquad &\text{on } \Gamma^{T}, \\
%\Psi_{m_j} &= 0, \qquad &\text{on } (\partial\Omega_{m_j} \setminus \Gamma)^T, \\
\Psi_{m_j}(0) &= \Psi_{m_j,I}, &\text{on } \Omega_{m_j}.
\end{aligned}
\right.
\end{equation}

For $\kappa > -1$ and $\lambda \in (-1,1)$, an effective model results in which the fracture as a physical entity has disappeared. In this case, both the pressure and the flux are continuous on $\Gamma$.\\

\par{\color{color5}\text{Effective model V}}:
\begin{equation}
\left\lbrace
\begin{aligned}
\partial_t S_m(\psi_{m_j}) - \nabla \cdot \left(K_m(S_m(\psi_{m_j})) \nabla \psi_{m_j}\right) &= f_{m_j}, \qquad &\text{in } \Omega_{m_j}^T, \\
\left[\vec{q}_m\right]_\Gamma &= 0, \qquad &\text{on } \Gamma^T, \\
\psi_{m_j} &= \psi_f, \qquad &\text{on } \Gamma^{T}, \\
\psi_{m_j}(0) &= \psi_{m_j,I}, &\text{on } \Omega_{m_j}.
\end{aligned}
\right.
\end{equation}

%-------- Existence ------------------------
\section{Existence}
\label{sec:existence}
This section is concerned with the existence of a (weak) solution to Problem $\mathcal{P}_\varepsilon$ for a fixed fracture width $\varepsilon > 0$. We proceed in the spirit of \cite{PopBogersKumar2016}, where a linear model for reactive flow  with nonlinear transmission conditions at the interfaces is considered. For the sake of readability, we drop the superscript $\varepsilon$ since it is fixed throughout this section.
%-------- Notation -------------------------
\subsection{Notation}
In this work, we use common notation from functional analysis. The space $L^2(\Omega)$ contains all real valued square integrable functions on a domain $\Omega \subset \mathbb{R}^d$, and $W^{1,2}(\Omega) \subset L^2(\Omega)$ stands for the subset of functions whose weak first order derivatives lie in $L^2(\Omega)$ as well. Furthermore, Bochner spaces $L^2(0,T;X)$ will be used, where $X$ stands for a Banach space. For all domains $\Omega \subset \mathbb{R}^d$ and time intervals $[0,T]$, we introduce the following abbreviations for the norm and the inner product:
\begin{equation}
\begin{aligned}
\|\cdot\|_\Omega &:= \|\cdot\|_{L^2(\Omega)},& \quad &\text{ and }& \quad \|\cdot\|_{\Omega^T} &:= \|\cdot\|_{L^2(0,T;L^2(\Omega))}, \\
\left(\cdot,\cdot\right)_\Omega &:= \left(\cdot,\cdot\right)_{L^2(\Omega)},&  \quad &\text{ and }& \qquad \left(\cdot,\cdot\right)_{\Omega^T} &:=  \left(\cdot,\cdot\right)_{L^2(0,T;L^2(\Omega))}.
\end{aligned}
\end{equation}
In view of the particular Problem $\mathcal{P}_\varepsilon$, we use the following conventions:
\begin{itemize}
\item $\rho$ is the index for the subdomain and takes values in $\lbrace m_1, m_2, f \rbrace$,
\item $j$ is the index for specifying the matrix block subdomain and takes values in $\lbrace 1, 2 \rbrace$,
\item for functions $g$ which are the same in both matrix block subdomains (such as $S$, $K$, $\ldots$), we define $g_{m_1} = g_{m_2} =: g_m$, which allows to write e.g. $S_\rho(\psi_\rho)$.
\end{itemize}
Moreover, $C \geq 0$ is a generic constant. \\
%-------- Assumptions ----------------------
\subsection{Assumptions}
For the analysis, we assume that the following conditions are satisfied:
\begin{itemize}
\item[$(A_f)$] $f_\rho \in C(0,T;L^2(\Omega_\rho))$ and there exists $M_{f} > 0$ such that $|f_\rho| \leq M_f$ a.e. in $\Omega_\rho^T$.
\item[$(A_{D_S})$] $S_\rho \in C^1(\mathbb{R})$.
\item[$(A_S)$] There exist $m_S, M_{S} > 0$ such that $0 < m_S \leq S_\rho'(\psi_\rho) \leq M_{S}$ for all $\psi_\rho \in \mathbb{R}$.
\item[$(A_{D_K})$] $K_\rho \in C^1(\mathbb{R})$ and $K_\rho'(S_\rho) > 0$ for all $S_\rho \in \mathbb{R}$.
\item[$(A_K)$] There exist $m_{K}, M_{K} > 0$ such that $0 < m_K \leq K_\rho(S_\rho) < M_K $ for all $S_\rho \in \mathbb{R}$.
\item[$(A_\rho)$] There exists $M_\rho > 0$ such that $|\psi_{\rho,I}| \leq M_\rho$ a.e. in $\Omega_\rho$.
\end{itemize}
\begin{remark}[Assumptions]
%\begin{itemize}
% \item The time continuity of the source term in Assumption $(A_f)$ allows us to evaluate it at all times $t \in (0,T]$.
%\item
 Note that due to Assumption $(A_S)$, we only consider the regular parabolic case here.
%\item When applying the Kirchhoff transformation to Problem $\mathcal{P}_\varepsilon$, the flux term becomes linear at the cost of a non-linear Dirichlet transmission condition at the interface. The latter non-linearity is required to be a $C^3$-function for the existence result in \cite{JaegerKutev}. This regularity is provided by Assumption $(A_{D_K})$. However, the smoothness assumptions on the boundary of the domain in \cite{JaegerKutev} are not satisfied in our setting, for which reason we show the existence of a weak solution to the time-discrete problem.
%for which reason we show the existence of a weak solution to Problem $\mathcal{P}_\varepsilon$ provided that the existence of a weak solution to the time-discrete problem is assumed.
Assumption $(A_K)$ excludes the slow diffusion case and guarantees the existence of a minimum positive permeability everywhere.
%\end{itemize}
Moreover, for the sake of presentation, we make the assumption $S_\rho(0) = 0$, which can easily be achieved by redefining ${S}_\rho(\psi) = \tilde{S}_\rho(\psi) - \tilde{S}_\rho(0)$. Assumption $(A_S)$ immediately yields the estimate $\|S(\psi)\|_{\Omega} \leq M_S \|\psi\|_{\Omega}$.
\end{remark}
%-------- Weak solution --------------------
\subsection{Weak solution}
We establish a suitable notion of a solution to Problem $\mathcal{P}_\varepsilon$. For this purpose, we define the function spaces
{
\begin{equation}
\begin{aligned}
\mathcal{V}_{m_j} &\subset  W^{1,2}(\Omega_{m_j}), \\
\mathcal{V}_f &\subset W^{1,2}(\Omega_f),
%\mathcal{V}_{m_j} &:= \lbrace u \in W^{1,2}(\Omega_{m_j}): u = 0 \text{ on } \partial \Omega_{m_j} \setminus \Gamma_j \rbrace, \\
%\mathcal{V}_f &:= \lbrace u \in W^{1,2}(\Omega_f): u = 0 \text{ on } \partial \Omega_{f} \setminus \Gamma \rbrace,
%\mathcal{W}_{m_j} &:= \lbrace u \in L^2(0,T;\mathcal{V}_{m_j}): \partial_t u \in L^2(0,T;\mathcal{V}_{m_j}^*) \rbrace, \\
%\mathcal{W}_f &:= \lbrace u \in L^2(0,T;\mathcal{V}_f): \partial_t u \in L^2(0,T;\mathcal{V}_f^{*}) \rbrace,
\end{aligned}
\end{equation} 
}
where the desired boundary conditions are implicitly imposed by the choice of the subspaces $\mathcal{V}_{m_j}$ and $\mathcal{V}_f$. We choose homogeneous Dirichlet conditions for the external boundaries in this section, that is, 
$\mathcal{V}_{m_j} = \lbrace u \in W^{1,2}(\Omega_{m_j}): u = 0 \text{ on } \partial \Omega_{m_j} \setminus \Gamma_j \rbrace$. This choice of boundary condition simplifies the presentation and extensions to other boundary conditions such as no flow Neumann conditions can be made without additional difficulties.
%where the statement $u = 0$ is meant in the sense of traces. 
Note that these spaces depend on the fixed fracture width $\varepsilon$. 

The weak formulation of Problem $\mathcal{P}_\varepsilon$ reads as follows:
\begin{definition}[Weak solution]
\label{def:weak_solution}
%\newline 
A triple $(\psi_{m_1}, \psi_{m_2}, \psi_f)$ belonging to  product space $ L^2(0,T;\mathcal{V}_{m_1}) \times L^2(0,T;\mathcal{V}_{m_2}) \times L^2(0,T;\mathcal{V}_f)$ is called a weak solution to Problem $\mathcal{P}_\varepsilon$ if 
\begin{equation}
\begin{aligned}
\psi_{m_1} = \psi_f \ \text{ on } \Gamma_1 \quad \text{and} \quad \psi_{m_2} = \psi_f \ \text{ on } \Gamma_2 \quad \text{for a.e. } t \in [0,T],
\end{aligned}
\end{equation}
in the sense of traces, and
\begin{equation}
\begin{aligned}
&\hspace{.3cm} - \sum_{j=1}^2 \scalOmjT{S_m(\psi_{m_j}),\partial_t \phi_{m_j}} - \varepsilon^\kappa \scalOfT{S_f(\psi_f), \partial_t \phi_f} \\
&\hspace{.3cm} + \sum_{j=1}^2 \scalOmjT{K_m(S_m(\psi_{m_j}))\nabla \psi_{m_j}, \nabla \phi_{m_j}} + \varepsilon^\lambda \scalOfT{K_f(S_f(\psi_f))\nabla \psi_f, \nabla \phi_f} \\
&= \sum_{j=1}^2 \scalOmjT{f_{m_j}, \phi_{m_j}} + \scalOfT{f_f, \phi_f} \\
&\hspace{.3cm} + \sum_{j=1}^2 \left(S_m(\psi_{m_j,I}), \phi_{m_j}(0)\right)_{\Omega_{m_j}} + \varepsilon^\kappa \left(S_f(\psi_{f,I}), \phi_{f}(0)\right)_{\Omega_{f}},
\label{eq:weak_solution}
\end{aligned}
\end{equation}
for all $(\phi_{m_1}, \phi_{m_2}, \phi_f) \in W^{1,2}(0,T;\mathcal{V}_{m_1}) \times W^{1,2}(0,T;\mathcal{V}_{m_2}) \times W^{1,2}(0,T;\mathcal{V}_f)$ satisfying 
\begin{equation}
\phi_{m_1} = \phi_f \ \text{ on } \Gamma_1 \quad \text{and} \quad \phi_{m_2} = \phi_f \ \text{ on } \Gamma_2 \quad \text{for a.e. } t \in [0,T],
\end{equation}
and 
\begin{equation}
\phi_\rho(T) = 0, \qquad \text{for } \rho \in \lbrace m_1, m_2, f \rbrace.
\end{equation}
\end{definition}
Note that it makes sense to evaluate the test functions $\phi_\rho$ at the times $t = 0$ and $t = T$ in the above definition since the space $W^{1,2}(0,T;\mathcal{V}_\rho)$ is embedded in $C(0,T;\mathcal{V}_\rho)$.
%\begin{remark}[Kirchhoff transform]
%It is possible to apply the Kirchhoff transform as suggested by \cite{AltLuckhaus}, \cite[sec. 1.5.4]{BerningerDiss}, in our setting in each subdomain. For this, one introduces a function
%\begin{equation}
%\mathcal{K}_\rho: \mathbb{R} \to \mathbb{R}, \qquad u_\rho := \mathcal{K}_\rho(\psi_\rho) = \int_0^{\psi_\rho} K_\rho(S_\rho(\varphi)) \ d\varphi.
%\end{equation}
%%Due to the assumptions on $K_\rho$ and $S_\rho$, the Kirchhoff transformation is invertible and one can define the property
%%\begin{equation}
%%b_\rho(u_\rho) := S_\rho \circ \mathcal{K}_\rho^{-1}(u_\rho).
%%\end{equation}
%By the chain rule, one obtains $\nabla u_\rho = K_\rho(S_\rho(\psi_\rho)) \nabla \psi_\rho$, which transforms Problem $\mathcal{P}$ into a semi-linear problem. Since $\mathcal{K}_\rho$ is Lipschitz continuous, the Kirchhoff transformed problem is equivalent to the original problem \cite{MarcusMizel}. 
%% Note in particular that $\psi_\rho = 0$ if and only if $u_\rho = 0$. \\ 
%The advantage of the Kirchhoff transformed formulation is the linear flux term. However, this comes at the cost of a non-linear transmission condition for the Kirchhoff transformed pressure variable. Therefore, it is not possible to use the triple $(u_{m_1}, u_{m_2}, u_f)$ as test functions, for which reason we consider Problem $\mathcal{P}$ in physical variables without applying the Kirchhoff transform herein.
%\end{remark}
%-------- Time discretisation --------------------
\subsection{Time discretisation}
In what follows, we discretise the problem in time using an implicit Euler approach, which gives elliptic equations at every discrete time $t_k = k \Delta t$, for $k \in \lbrace 0, \ldots, N \rbrace$, where $N \in \mathbb{N}$. We assume without loss of generality that $N \Delta t = T$. Here, $\Delta t > 0$ denotes the fixed time step size. Choose $\psi_\rho^0 = \psi_{\rho,I}$ and let the sequence of solutions in domain $\Omega_\rho$ of the time-discrete problems be given as $\lbrace \psi_\rho^k \rbrace$. Moreover, let $f_\rho^k := f_\rho(t_k)$. The definition of a weak solution to the time-discrete problem is given by
\begin{definition}
\label{def:sol_discrete_problem}
Let $k > 0$ and let $(\psi_{m_1}^{k-1}, \psi_{m_2}^{k-1}, \psi_f^{k-1}) \in \mathcal{V}_{m_1} \times \mathcal{V}_{m_1} \times \mathcal{V}_{f}$ be given. We call $(\psi_{m_1}^k, \psi_{m_2}^k, \psi_{f}^k) \in \mathcal{V}_{m_1} \times \mathcal{V}_{m_1} \times \mathcal{V}_{f}$ a weak solution to the time-discrete problem at time $t_k$ if it satisfies 
\begin{equation}
\psi_{m_1}^k = \psi_f^k \ \text{ on } \Gamma_1 \quad \text{and} \quad \psi_{m_2}^k = \psi_f^k \ \text{ on } \Gamma_2
\end{equation}
in the sense of traces, and
\begin{equation}
\begin{aligned}
&\hspace{0.1cm} \sum_{j=1}^2 \scalOmj{S_m(\psi_{m_j}^k), \phi_{m_j}} + \varepsilon^\kappa \scalOf{S_f(\psi_{f}^k), \phi_{f}} \\
&\hspace{0.1cm} + \Delta t \sum_{j=1}^2 \scalOmj{K_m(S_m(\psi_{m_j}^k)) \nabla \psi_{m_j}^k, \nabla \phi_{m_j}} + \varepsilon^\lambda \Delta t \scalOf{K_f(S_f(\psi_{f}^k)) \nabla \psi_{f}^k, \nabla \phi_{f}} \\
&= \Delta t \sum_{j=1}^2 \scalOmj{f_{m_j}^k, \phi_{m_j}} + \Delta t \scalOf{f_{f}^k, \phi_{f}} \\
&\hspace{0.1cm}+ \sum_{j=1}^2 \scalOmj{S_m(\psi_{m_j}^{k-1}), \phi_{m_j}} + \varepsilon^\kappa \scalOf{S_f(\psi_{f}^{k-1}), \phi_{f}},
\end{aligned}
\label{eq:discrete_solution}
\end{equation}
for all $(\phi_{m_1}, \phi_{m_2}, \phi_f) \in \mathcal{V}_{m_1} \times \mathcal{V}_{m_2} \times \mathcal{V}_f$ satisfying $\phi_{m_j} = \phi_f$ on $\Gamma_j$ for $j \in \lbrace 1, 2 \rbrace$. 
\end{definition}
%% Existence of solution for the time-discrete problem
\subsection{Existence of solution for the time-discrete problem}
We begin with the existence of solution for the time-discrete problem as given in Definition \ref{def:sol_discrete_problem}. We show that the solution triple $(\psi_{m_1}^k, \psi_{m_2}^k, \psi_{f}^k) \in \mathcal{V}_{m_1} \times \mathcal{V}_{m_2} \times \mathcal{V}_{f}$ satisfying Definition \ref{def:sol_discrete_problem} can be interpreted as a solution to an elliptic problem having coefficients with possibly jump discontinuities.  The existence of solution is thus tantamount to showing that of an elliptic problem defined in the whole domain having possibly discontinuous coefficients. The latter follows from standard elliptic theory. We start by introducing the space $\mathcal{V}$
\begin{align*}
\mathcal{V} := \left \{  (\psi_{m_1}, \psi_{m_2}, \psi_{f})    \in \mathcal{V}_{m_1} \times \mathcal{V}_{m_2} \times \mathcal{V}_{f}, \text{ s.t. } \psi_{m_1} =  \psi_f  \text{ at } \Gamma_1,\;  \psi_{m_2} =  \psi_f \text{ at } \Gamma_2\right \},
\end{align*} 
equipped with the norm
\[ \left \| \psi \right \|_{\mathcal{V}} := \sqrt {\left ( \| \psi_{m_1}\|_{W^{1,2}(\Omega_{m_1})}^2 + \|\psi_{m_2}\|_{W^{1,2}(\Omega_{m_2})}^2 + \| \psi_{f}\|_{W^{1,2}(\Omega_f)}^2 \right )}.\] As before, the equalities on the interfaces $\Gamma_1, \Gamma_2$ are in the sense of traces. 
Below we will use the characteristic function  $\chi_\rho$ of $\Omega_\rho, \rho \in \lbrace m_1, m_2, f \rbrace$ in defining a function over $\Omega$ given a triple in $\mathcal{V}$. We have the following proposition showing that $\mathcal{V}$ is isomorphic to $W^{1,2}(\Omega)$.   
\begin{proposition} \label{prop:H1}
Given $\psi \in W^{1,2}(\Omega)$, its restriction to $\Omega_\rho$, $\rho \in \lbrace m_1, m_2, f \rbrace$, defines a triple $\left (\psi_{m_1}, \psi_{m_2}, \psi_{f} \right ) \in \mathcal{V}$. Conversely, given $\left (\psi_{m_1}, \psi_{m_2}, \psi_{f} \right ) \in \mathcal{V}$, $\psi = \sum_\rho \psi_\rho \chi_\rho, \rho \in \lbrace m_1, m_2, f\rbrace$ lies in $W^{1,2}(\Omega)$.
\end{proposition}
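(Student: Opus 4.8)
The plan is to exhibit the two linear maps implicit in the statement — the restriction $\psi \mapsto (\psi|_{\Omega_{m_1}}, \psi|_{\Omega_{m_2}}, \psi|_{\Omega_f})$ and the gluing $(\psi_{m_1},\psi_{m_2},\psi_f)\mapsto \sum_\rho \psi_\rho \chi_\rho$ — and to verify that each lands in the claimed space, that they are mutually inverse, and that they are norm preserving. The mathematical heart is the classical fact that a function which is piecewise $W^{1,2}$ with respect to a partition of $\Omega$ by Lipschitz interfaces belongs to $W^{1,2}(\Omega)$ exactly when its one-sided traces agree across every interface; the continuity constraints built into $\mathcal{V}$ are precisely these matching conditions on $\Gamma_1$ and $\Gamma_2$. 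Since the interfaces $\Gamma_1,\Gamma_2$ are flat segments and the subdomains are rectangles, all trace operators and Green's formulas below are available on Lipschitz domains.

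For the forward direction, if $\psi \in W^{1,2}(\Omega)$ then each restriction $\psi|_{\Omega_\rho}$ lies in $W^{1,2}(\Omega_\rho)$, as restriction cannot increase the Sobolev norm. Moreover, the trace of $\psi|_{\Omega_{m_1}}$ on $\Gamma_1$ and the trace of $\psi|_{\Omega_f}$ on $\Gamma_1$ both coincide with the single trace of $\psi$ regarded as an interior curve, by continuity of the trace operator under restriction; hence $\psi_{m_1}=\psi_f$ on $\Gamma_1$, and likewise $\psi_{m_2}=\psi_f$ on $\Gamma_2$. The external Dirichlet conditions defining $\mathcal{V}_{m_j}$ and $\mathcal{V}_f$ are inherited from those of $\psi$, so the triple lies in $\mathcal{V}$; in both directions these external conditions are preserved, so the correspondence identifies $\mathcal{V}$ with the subspace of $W^{1,2}(\Omega)$ carrying them.

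The substance is the converse (gluing) direction. Given $(\psi_{m_1},\psi_{m_2},\psi_f)\in\mathcal{V}$, set $\psi := \sum_\rho \psi_\rho \chi_\rho \in L^2(\Omega)$ and $g := \sum_\rho (\nabla\psi_\rho)\chi_\rho \in (L^2(\Omega))^2$. I would show that $g$ is the weak gradient of $\psi$ by testing against an arbitrary $\phi\in C_c^\infty(\Omega)$: splitting the integral over the three subdomains and applying Green's formula on each gives, for each component $i$,
\[
\int_\Omega \psi\,\partial_i\phi\,dx = \sum_\rho\Big(-\int_{\Omega_\rho}(\partial_i\psi_\rho)\,\phi\,dx + \int_{\partial\Omega_\rho}\psi_\rho\,\phi\,n_i\,ds\Big).
\]
Because $\phi$ has compact support in $\Omega$, the boundary integrals over the external parts of each $\partial\Omega_\rho$ vanish, leaving only the contributions along $\Gamma_1$ and $\Gamma_2$. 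On $\Gamma_j$ the two adjacent subdomains contribute with opposite outward normals, and since the traces of $\psi_{m_j}$ and $\psi_f$ agree there while $\phi$ is continuous, these interface integrals cancel. Hence $\int_\Omega \psi\,\partial_i\phi\,dx = -\int_\Omega g_i\,\phi\,dx$, so $\nabla\psi = g \in (L^2(\Omega))^2$ and $\psi\in W^{1,2}(\Omega)$. The two maps are then evidently linear and mutually inverse, and the definition of $\|\cdot\|_{\mathcal{V}}$ makes it equal to $\|\psi\|_{W^{1,2}(\Omega)}$, yielding the isomorphism.

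The main obstacle is exactly this interface cancellation. Care is needed because $\psi_\rho\in W^{1,2}(\Omega_\rho)$ possesses only an $H^{1/2}(\Gamma_j)$ trace rather than a pointwise one, so the boundary terms in Green's formula must be read via the $H^{1/2}$–$H^{-1/2}$ duality (equivalently, as the integral of the $L^2(\Gamma_j)$ trace against the smooth datum $\phi\,n_i$). The flatness of $\Gamma_1,\Gamma_2$ and the rectangular geometry supply the Lipschitz regularity required both for this formula and for the trace operator to be well defined, which is what makes the cancellation rigorous rather than merely formal.
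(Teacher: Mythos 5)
Your proof is correct and follows essentially the same route as the paper's: restriction plus trace matching for the forward direction, and for the converse the gluing argument in which one integrates by parts on each subdomain against $\phi \in C_c^\infty(\Omega)$ and cancels the interface terms on $\Gamma_1,\Gamma_2$ using the equality of traces and opposite normals. Your explicit treatment of the boundary terms via the $H^{1/2}$--$H^{-1/2}$ duality and your remark on how the external Dirichlet conditions are carried along are welcome refinements of details the paper leaves implicit, but they do not change the argument.
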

\begin{proof}
We start with the first part. For smooth functions, the assertion is obvious. Using a density argument and trace inequalities on $\Gamma_1$ and $\Gamma_2$, the extension to $W^{1,2}$ functions is straightforward. For the converse, the boundedness of the $L^2$ norm is clear. Further, it is sufficient to prove that the weak derivatives of the triple $(\psi_{m_1}, \psi_{m_2}, \psi_{f}) \in \mathcal{V}$ are equal to those of $\psi$ restricted to $\Omega_\rho$. For a given triple $(\psi_{m_1}, \psi_{m_2}, \psi_{f}) \in \mathcal{V}$, let $\psi = \sum_\rho \psi_\rho \chi_\rho, \rho \in \lbrace m_1, m_2, f \rbrace$. Let $\phi$ be weak derivative of $\psi$ in the $i-$th direction. Using partial integration, for any smooth function $w$ with compact support in $\Omega$, 
\[ \int_{\Omega} \phi w dx =  -  \int_{\Omega}  \psi \partial_i w dx = -  \sum_\rho \int_{\Omega_\rho}  \psi_\rho \partial_i w dx. \]
Using partial integration on each subdomain  
\[ -  \sum_\rho \int_{\Omega_\rho}  \psi_\rho \partial_i w dx =  \sum_\rho \int_{\Omega_\rho} \partial_i  \psi_\rho w dx, \]
 where the terms on the boundaries $\Gamma_1, \Gamma_2$ get cancelled due to traces being equal. The last equality shows that $\phi$ restricted to $\Omega_\rho, \rho \in \lbrace m_1, m_2, f \rbrace$ is equal to the weak derivative of $\psi$ in the $i$-th  direction. This proves the proposition. 
\end{proof}
\begin{remark}
With respect to the norm $ \| \psi\|_{W^{1,2}(\Omega)} = \sqrt { \|\psi\|_\Omega^2 + \| \nabla \psi \|_\Omega^2}$, and the same  for the $W^{1,2}$ norm on $\Omega_\rho, \rho \in \lbrace m_1, m_2, f \rbrace$, the isomorphism of $\mathcal{V}$ to $W^{1,2}(\Omega)$ is an isometry. 
\end{remark}

Next, we consider an elliptic problem defined in the entire domain $\Omega$. For a given triple $(\psi_{m_1}^{k-1}, \psi_{m_2}^{k-1}, \psi_f^{k-1}) \in \mathcal{V}$,  define $\psi^{k-1} = \sum_\rho \psi_\rho^{k-1} \chi_\rho, \rho \in \lbrace m_1, m_2, f \rbrace,$ and the coefficients $K = K_{m_1} \chi_{m_1} +\varepsilon^\lambda K_{f} \chi_{f} + K_{m_2} \chi_{m_2} ,$ and $S = S_{m_1} \chi_{m_1} +\varepsilon^\kappa S_{f} \chi_{f} + S_{m_2} \chi_{m_2}$. Definition of a solution for Problem P$_\Omega$ is as follows:  

\begin{definition}[Weak solution of Problem $\mathcal{P}_\Omega$] \label{def:fulldomain}
Given $\psi^{k-1}$, a weak solution $\psi^k \in W^{1,2}_0 (\Omega)$ is such that for all $\phi \in W^{1,2}_0 (\Omega)$ it holds that 
\begin{equation}
\begin{aligned}
 \left ({S(\psi^k), \phi} \right )_\Omega + \Delta t \left ( K(S(\psi^k)) \nabla \psi^k, \nabla \phi \right )_\Omega = \Delta t  \left ( f^k, \phi \right )_\Omega + \left ( S(\psi^{k-1}), \phi \right )_\Omega.
\end{aligned}
\label{eq:discrete_solution_fulldomain}
\end{equation}
\end{definition}
The above problem therefore is a non-linear elliptic problem with positive elliptic coefficient and a lower order reaction term that is monotone with respect to unknown and piecewise smooth functions with respect to space. The existence of solution in the Hilbert space $W^{1,2}_0(\Omega)$ is standard and can be read from \cite{BoccardoMurat2, BoccardoMurat}.  This is stated in the next lemma. 

\begin{lemma} \label{lemma:pomega}
There exists a weak solution of problem P$_\Omega$ in the sense of Definition \ref{def:fulldomain}.
\end{lemma}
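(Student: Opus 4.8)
The plan is to recast Definition~\ref{def:fulldomain} as a single nonlinear operator equation on $W^{1,2}_0(\Omega)$ and to invoke the surjectivity theory for bounded, coercive, pseudo-monotone operators of Leray--Lions type, which is precisely the framework of \cite{BoccardoMurat2, BoccardoMurat}. Concretely, define $A : W^{1,2}_0(\Omega) \to (W^{1,2}_0(\Omega))^*$ by
\begin{equation*}
\langle A(\psi), \phi \rangle := \left(S(\psi), \phi\right)_\Omega + \Delta t \left(K(S(\psi)) \nabla \psi, \nabla \phi\right)_\Omega ,
\end{equation*}
and the functional $F$ by $\langle F, \phi\rangle := \Delta t\,(f^k,\phi)_\Omega + (S(\psi^{k-1}),\phi)_\Omega$. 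Solving Problem $\mathcal{P}_\Omega$ is then equivalent to finding $\psi^k$ with $A(\psi^k) = F$, and $F \in (W^{1,2}_0(\Omega))^*$ because $f^k \in L^2(\Omega)$ and, by $(A_S)$ together with $S(0)=0$, also $S(\psi^{k-1}) \in L^2(\Omega)$.

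Second, I would verify boundedness and coercivity, both of which follow directly from the assumptions. For boundedness, $(A_S)$ gives $\|S(\psi)\|_\Omega \le M_S\|\psi\|_\Omega$ and $(A_K)$ bounds the factor $K(S(\psi))$ by $M_K$, so Cauchy--Schwarz yields $|\langle A(\psi),\phi\rangle| \le C\|\psi\|_{W^{1,2}(\Omega)}\|\phi\|_{W^{1,2}(\Omega)}$. For coercivity, since $S$ is increasing with $S(0)=0$ one has $S(\psi)\psi \ge 0$ pointwise, while $(A_K)$ gives $K(S(\psi)) \ge m_K$ (the fixed positive factor $\varepsilon^\lambda$ in the fracture only changes the constant), whence
\begin{equation*}
\langle A(\psi), \psi\rangle \ge \Delta t\, m_K^\varepsilon \,\|\nabla \psi\|_\Omega^2 \ge c\,\|\psi\|_{W^{1,2}(\Omega)}^2
\end{equation*}
for some $c = c(\varepsilon,\Delta t) > 0$, by Poincaré's inequality on $W^{1,2}_0(\Omega)$; hence $\langle A(\psi),\psi\rangle/\|\psi\|_{W^{1,2}(\Omega)} \to \infty$.

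The main obstacle, and the only delicate point, is that $A$ is \emph{not} monotone in the naive sense: because $K(S(\cdot))$ depends on the unknown $\psi$, the difference $\langle A(\psi_1)-A(\psi_2),\psi_1-\psi_2\rangle$ cannot be signed, so plain Browder--Minty does not apply. Instead I would exploit the Leray--Lions structure of the principal part $a(x,\psi,\xi) := K(S(\psi))\,\xi$: by $(A_{D_S})$ and $(A_{D_K})$ it is measurable in $x$ and continuous in $(\psi,\xi)$, by $(A_K)$ it satisfies the growth bound $|a(x,\psi,\xi)| \le M_K|\xi|$ and the ellipticity $a(x,\psi,\xi)\cdot\xi \ge m_K|\xi|^2$, and it obeys the monotonicity-in-the-gradient inequality $(a(x,\psi,\xi)-a(x,\psi,\xi'))\cdot(\xi-\xi') = K(S(\psi))|\xi-\xi'|^2 \ge 0$, all uniformly in $x$. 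These are the only properties needed, and they persist despite the piecewise jumps in $K = K_{m_1}\chi_{m_1} + \varepsilon^\lambda K_f \chi_f + K_{m_2}\chi_{m_2}$, since $K$ is merely required to be bounded, measurable and uniformly positive. The principal part is therefore pseudo-monotone, and the lower-order term $\psi \mapsto S(\psi)$, regarded as a map $W^{1,2}_0(\Omega) \to L^2(\Omega) \hookrightarrow (W^{1,2}_0(\Omega))^*$, is strongly continuous by the compact embedding $W^{1,2}_0(\Omega) \hookrightarrow\hookrightarrow L^2(\Omega)$, hence a pseudo-monotone compact perturbation; thus $A$ is pseudo-monotone.

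With boundedness, coercivity and pseudo-monotonicity in hand, the surjectivity theorem for coercive pseudo-monotone operators (Brezis) produces $\psi^k \in W^{1,2}_0(\Omega)$ with $A(\psi^k)=F$, i.e. the asserted weak solution. The technical heart is the standard Minty-type step inside the pseudo-monotonicity check: for $\psi_n \rightharpoonup \psi$ with $\limsup_n \langle A(\psi_n),\psi_n-\psi\rangle \le 0$, the compact embedding gives $S(\psi_n)\to S(\psi)$ in $L^2$, which transfers the $\limsup$ condition to the flux term; combined with the gradient-monotonicity inequality and $K(S(\psi_n))\nabla\psi \to K(S(\psi))\nabla\psi$ strongly in $L^2$, this forces $\nabla\psi_n \to \nabla\psi$ strongly and allows passage to the limit in $K(S(\psi_n))\nabla\psi_n$. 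As an alternative I could freeze the coefficient, solve the genuinely monotone problem $(S(\psi),\phi)_\Omega + \Delta t\,(K(S(\bar\psi))\nabla\psi,\nabla\phi)_\Omega = \langle F,\phi\rangle$ by Browder--Minty for each fixed $\bar\psi$, and close with a Schauder fixed point using the same compact embedding; uniqueness is neither needed nor claimed here.
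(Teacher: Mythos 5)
Your proposal is correct and follows essentially the same route as the paper: the paper recasts the time-discrete problem as a single nonlinear elliptic equation on $W^{1,2}_0(\Omega)$ with bounded, uniformly elliptic, piecewise-smooth coefficients and a monotone lower-order term, and then simply cites \cite{BoccardoMurat2, BoccardoMurat} for existence --- precisely the Leray--Lions/pseudo-monotone framework you develop. Your write-up merely makes explicit the boundedness, coercivity, and pseudo-monotonicity (Minty-type) verifications that the paper delegates to those references, and these verifications are sound (in particular, the uniform positivity and linearity in the gradient do force strong $L^2$ convergence of $\nabla\psi_n$ in the limsup step, as you claim).
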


The summary of the above discussion results in the existence of a solution for time discrete problem as per Definition \ref{def:sol_discrete_problem} and is given below. 
\begin{lemma} \label{lemma:existence_timediscrete}
Given $(\psi_{m_1}^{k-1}, \psi_{m_2}^{k-1}, \psi_f^{k-1}) \in \mathcal{V}_{m_1} \times \mathcal{V}_{m_1} \times \mathcal{V}_{f}$,  $k > 0$ , there exists a solution triple $(\psi_{m_1}^k, \psi_{m_2}^k, \psi_{f}^k) \in \mathcal{V}_{m_1} \times \mathcal{V}_{m_1} \times \mathcal{V}_{f}$ and 
\begin{equation}
\psi_{m_1}^k = \psi_f^k \ \text{ on } \Gamma_1 \quad \text{and} \quad \psi_{m_2}^k = \psi_f^k \ \text{ on } \Gamma_2.
\end{equation}
\end{lemma}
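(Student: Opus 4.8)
The plan is to prove Lemma \ref{lemma:existence_timediscrete} entirely by transporting the already-established full-domain existence result, Lemma \ref{lemma:pomega}, through the isometric isomorphism of Proposition \ref{prop:H1}. The guiding observation is that the coefficients $S = S_{m_1}\chi_{m_1} + \varepsilon^\kappa S_f \chi_f + S_{m_2}\chi_{m_2}$ and $K = K_{m_1}\chi_{m_1} + \varepsilon^\lambda K_f \chi_f + K_{m_2}\chi_{m_2}$ were defined precisely so that the full-domain weak formulation \eqref{eq:discrete_solution_fulldomain}, once its $\Omega$-integrals are split across $\Omega_{m_1}, \Omega_f, \Omega_{m_2}$, becomes term-by-term identical to the triple weak formulation \eqref{eq:discrete_solution}, with the $\varepsilon$-powers simply absorbed into the fracture part of the coefficients.

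First I would, given $(\psi_{m_1}^{k-1}, \psi_{m_2}^{k-1}, \psi_f^{k-1}) \in \mathcal{V}$, invoke the second half of Proposition \ref{prop:H1} to assemble $\psi^{k-1} = \sum_\rho \psi_\rho^{k-1}\chi_\rho \in W^{1,2}_0(\Omega)$; here membership in $W^{1,2}_0$ rather than merely $W^{1,2}$ follows because the homogeneous Dirichlet conditions on $\partial\Omega \setminus (\Gamma_1\cup\Gamma_2)$ are already built into the subspaces $\mathcal{V}_{m_j}$ and $\mathcal{V}_f$. Next I would apply Lemma \ref{lemma:pomega} with the source $f^k = f_{m_1}^k\chi_{m_1} + f_f^k\chi_f + f_{m_2}^k\chi_{m_2}$ to obtain a weak solution $\psi^k \in W^{1,2}_0(\Omega)$ of Problem $\mathcal{P}_\Omega$ in the sense of Definition \ref{def:fulldomain}. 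Finally I would use the first half of Proposition \ref{prop:H1} to restrict $\psi^k$ to the three subdomains, producing a triple $(\psi_{m_1}^k, \psi_{m_2}^k, \psi_f^k) \in \mathcal{V}$; the trace equalities $\psi_{m_1}^k = \psi_f^k$ on $\Gamma_1$ and $\psi_{m_2}^k = \psi_f^k$ on $\Gamma_2$ then hold automatically, since they are exactly the defining constraints of $\mathcal{V}$.

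It then remains to verify that this triple solves \eqref{eq:discrete_solution}. For an arbitrary admissible triple of test functions $(\phi_{m_1}, \phi_{m_2}, \phi_f)$, that is, an element of $\mathcal{V}$, Proposition \ref{prop:H1} yields a single $\phi = \sum_\rho \phi_\rho \chi_\rho \in W^{1,2}_0(\Omega)$, and conversely every full-domain test function arises this way, so the test spaces correspond exactly under the isomorphism. Inserting $\phi$ into \eqref{eq:discrete_solution_fulldomain} and splitting each $\Omega$-integral into its restrictions to $\Omega_{m_1}, \Omega_f, \Omega_{m_2}$, using that $\nabla\psi^k$ and $\nabla\phi$ restrict to the subdomain gradients by Proposition \ref{prop:H1}, reproduces \eqref{eq:discrete_solution} line by line, with the fracture contributions carrying the factors $\varepsilon^\kappa$ and $\varepsilon^\lambda$ exactly as prescribed by the definitions of $S$ and $K$.

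The bulk of the genuine work has in fact already been discharged: the preservation of interface continuity under the gluing and restriction operations is the content of Proposition \ref{prop:H1}, and the solvability of the nonlinear elliptic problem with coefficients that are discontinuous across $\Gamma_1, \Gamma_2$ is Lemma \ref{lemma:pomega}. Consequently the only points demanding care here are the exact bookkeeping of the $\varepsilon$-weights in the fracture and the identification $\mathcal{V} \cong W^{1,2}_0(\Omega)$ of the admissible test spaces. I do not anticipate any analytical obstacle beyond this matching, so the proof is essentially an assembly of the two preceding results.
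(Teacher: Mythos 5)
Your proposal is correct and takes essentially the same route as the paper's own proof: obtain $\psi^k \in W^{1,2}_0(\Omega)$ from Lemma \ref{lemma:pomega}, then transfer it back to a triple in $\mathcal{V}$ via Proposition \ref{prop:H1}, the interface conditions holding by construction and the weak formulation \eqref{eq:discrete_solution} following by splitting the $\Omega$-integrals in \eqref{eq:discrete_solution_fulldomain}. Your write-up is in fact more explicit than the paper's terse proof (notably on the correspondence of test spaces and the $\varepsilon$-weight bookkeeping), but there is no difference in the underlying argument.
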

\begin{proof}
The existence result in Lemma \ref{lemma:pomega} provides $\psi^k \in W^{1,2}_0(\Omega)$. Proposition \ref{prop:H1} gives a triple $(\psi_{m_1}^k, \psi_{m_2}^k, \psi_{f}^k) \in \mathcal{V}_{m_1} \times \mathcal{V}_{m_1} \times \mathcal{V}_{f}$ satisfying $\psi_{m_1}^k = \psi_f^k \ \text{ on } \Gamma_1 $ and  $\psi_{m_2}^k = \psi_f^k \ \text{ on } \Gamma_2$. Moreover, Proposition \ref{prop:H1} states the equality of weak derivatives of $\psi^k$ restricted to $\Omega_\rho$ with those of $\psi_\rho^k$. Starting from \eqref{eq:discrete_solution_fulldomain}, this yields the existence result in Lemma \ref{lemma:existence_timediscrete}. 
\end{proof}

Our interface conditions on $\Gamma_1, \Gamma_2$ are natural: the continuity of flux and the pressures. In case when the interface conditions are nonlinear, we refer to the work of \cite{cances,JaegerKutev,JaegerSimon, PopBogersKumar2016}. 

%-------- A priori estimates ---------------
\subsection{{\it A priori} estimates}
We define in each domain the energy functional
\begin{equation}
\mathcal{W}_\rho(\psi_\rho) = \int_0^{\psi_\rho} S_\rho'(\varphi) \, \varphi \ d\varphi,
\end{equation}
which we will require in the proof of the following {\it a priori} estimate. First, we gather some properties of $\mathcal{W}_\rho$ in a simple lemma, which is based on Assumption $(A_S)$:
\begin{lemma}
\label{lemma:W_properties}
The functional $\mathcal{W}_\rho$ satisfies the following inequalities:
\begin{equation}
\begin{aligned}
\mathcal{W}_\rho(\psi_\rho) &\geq 0, \\
\mathcal{W}_\rho(\psi_\rho)-\mathcal{W}_\rho(\xi_\rho) &\leq \psi_\rho (S_\rho(\psi_\rho)-S_\rho(\xi_\rho)), \\
m_S \frac{\psi_\rho^2}{2} \leq \mathcal{W}_\rho(\psi_\rho) &\leq M_S \frac{\psi_\rho^2}{2},
\end{aligned}
\label{eq:W_properties}
\end{equation}
for all $\psi_\rho, \xi_\rho \in \mathbb{R}$.
\end{lemma}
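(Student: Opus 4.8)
The plan is to derive all three inequalities directly from the definition $\mathcal{W}_\rho(\psi_\rho) = \int_0^{\psi_\rho} S_\rho'(\varphi)\,\varphi\, d\varphi$ together with the pointwise bounds $0 < m_S \le S_\rho'(\varphi) \le M_S$ supplied by Assumption $(A_S)$. The only tools required are the fundamental theorem of calculus, which gives $\mathcal{W}_\rho'(\psi_\rho) = S_\rho'(\psi_\rho)\,\psi_\rho$, and a careful tracking of signs under the integral.

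First I would establish the two-sided bound (the third line). Writing
\[
\mathcal{W}_\rho(\psi_\rho) - m_S\frac{\psi_\rho^2}{2} = \int_0^{\psi_\rho} \big(S_\rho'(\varphi) - m_S\big)\,\varphi\, d\varphi,
\]
I observe that the factor $S_\rho'(\varphi) - m_S$ is nonnegative by $(A_S)$, so the integrand has the same sign as $\varphi$ throughout the interval of integration. Hence the integral is nonnegative whether $\psi_\rho \ge 0$ (integrand $\ge 0$, positive orientation) or $\psi_\rho < 0$ (integrand $\le 0$, reversed orientation), which yields the lower bound. The upper bound follows identically from $M_S - S_\rho'(\varphi) \ge 0$. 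The first inequality, nonnegativity of $\mathcal{W}_\rho$, is then immediate since $m_S \psi_\rho^2/2 \ge 0$.

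For the convexity-type estimate (the second line), I would rewrite the left-hand side as $\mathcal{W}_\rho(\psi_\rho)-\mathcal{W}_\rho(\xi_\rho) = \int_{\xi_\rho}^{\psi_\rho} S_\rho'(\varphi)\,\varphi\, d\varphi$ and the right-hand side as $\psi_\rho\big(S_\rho(\psi_\rho)-S_\rho(\xi_\rho)\big) = \psi_\rho\int_{\xi_\rho}^{\psi_\rho} S_\rho'(\varphi)\, d\varphi$, so that the claimed inequality becomes equivalent to
\[
\int_{\xi_\rho}^{\psi_\rho} S_\rho'(\varphi)\,(\varphi - \psi_\rho)\, d\varphi \le 0.
\]
Since $S_\rho' > 0$, the sign of the integrand is controlled by $\varphi - \psi_\rho$: on the interval between $\xi_\rho$ and $\psi_\rho$ this factor is $\le 0$ when $\xi_\rho < \psi_\rho$ and $\ge 0$ when $\xi_\rho > \psi_\rho$, and in each case the sign combines with the orientation of the integral to give a nonpositive value (the case $\xi_\rho = \psi_\rho$ being trivial).

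As for difficulty, there is no genuine obstacle here; the lemma is elementary. The only point demanding care is the bookkeeping of signs, since $\psi_\rho$ and $\xi_\rho$ may be negative and the limits of integration may appear in decreasing order. Each inequality therefore has to be verified in the cases $\psi_\rho \ge 0$ versus $\psi_\rho < 0$ (respectively $\xi_\rho \lessgtr \psi_\rho$) to be sure the inequalities are not inadvertently reversed. Once the integrands are written in the sign-definite forms above, every case reduces to the monotonicity of the integral with respect to its orientation.
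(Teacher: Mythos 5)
Your proof is correct, and it is precisely the elementary argument the authors have in mind: the paper states this as ``a simple lemma'' based on Assumption $(A_S)$ and omits the proof entirely, so there is no alternative route to compare against. Your sign-definite rewritings --- $\int_0^{\psi_\rho}\bigl(S_\rho'(\varphi)-m_S\bigr)\varphi\,d\varphi \ge 0$ for the quadratic bounds, and $\int_{\xi_\rho}^{\psi_\rho} S_\rho'(\varphi)\,(\varphi-\psi_\rho)\,d\varphi \le 0$ for the convexity-type estimate --- handle the orientation issues cleanly in all cases, which is the only real content of the lemma.
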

We obtain the following estimate for the time-discrete solution:
\begin{lemma}[A priori estimate I]
\label{lemma:a_priori_estimate_discrete}
The solution $(\psi_{m_1}^k, \psi_{m_2}^k, \psi_f^k)$ to the time-discrete problem in Definition \ref{def:sol_discrete_problem} satisfies
\begin{equation}
\begin{aligned}
&\hspace{0.3cm} \sum_{j=1}^2 \left( \max_{l \in \lbrace 1, \ldots, N \rbrace} \int_{\Omega_{m_j}} \mathcal{W}_m(\psi_{m_j}^l) \ d\vec{x} \right) + \varepsilon^\kappa \max_{l \in \lbrace 1, \ldots, N \rbrace} \int_{\Omega_f} \mathcal{W}_f(\psi_f^l) \ d\vec{x} \\
&\hspace{0.3cm} + \frac{\Delta t \ m_K}{2} \sum_{j=1}^2 \sum_{k=1}^N \|\nabla \psi_{m_j}^k\|^2_{\Omega_{m_j}} + \varepsilon^\lambda \frac{\Delta t \ m_K}{2} \sum_{k=1}^N \|\nabla \psi_{f}^k\|^2_{\Omega_{f}} \\
&\leq \sum_{j=1}^2 \int_{\Omega_{m_j}} \mathcal{W}_m(\psi_{{m_j},I}) \ d\vec{x} + \varepsilon^\kappa \int_{\Omega_{f}} \mathcal{W}_f(\psi_{f,I}) \ d\vec{x} \\
&\hspace{0.5cm} + \frac{\Delta t \ C_{p_m}}{2 m_K} \sum_{j=1}^2 \sum_{k=1}^N \|f_{m_j}^k\|^2_{\Omega_{m_j}} + \varepsilon^{-\lambda} \frac{\Delta t \ C_{p_f}}{2 m_K} \sum_{k=1}^N \|f_{f}^k\|^2_{\Omega_{f}}.
\end{aligned}
\end{equation}
\begin{proof}
We test in \eqref{eq:discrete_solution} with the triple $(\phi_{m_1}, \phi_{m_2}, \phi_f) = (\psi_{m_1}^k, \psi_{m_2}^k, \psi_f^k)$, which yields
\begin{equation}
\begin{aligned}
&\hspace{0.3cm} \sum_{j=1}^2 (S_m(\psi_{m_j}^k)-S_m(\psi_{m_j}^{k-1}), \psi_{m_j}^k)_{\Omega_{m_j}} + \varepsilon^\kappa (S_f(\psi_{f}^k)-S_f(\psi_{f}^{k-1}), \psi_{f}^k)_{\Omega_{f}} \\
&\hspace{0.3cm}+ \Delta t \sum_{j=1}^2 (K_m(S_m(\psi_{m_j}^k))\nabla \psi_{m_j}^k, \nabla \psi_{m_j}^k)_{\Omega_{m_j}} + \varepsilon^\lambda \Delta t (K_f(S_f(\psi_{f}^k))\nabla \psi_{f}^k, \nabla \psi_{f}^k)_{\Omega_{f}} \\
&= \sum_{j=1}^2 \Delta t (f_{m_j}^k, \psi_{m_j}^k)_{\Omega_{m_j}} + \Delta t (f_{f}^k, \psi_{f}^k)_{\Omega_{f}}.
\end{aligned}
\end{equation}
Poincar\'{e}'s inequality gives
\begin{equation}
\|\nabla \psi_\rho^k\|_{\Omega_\rho}^2 \geq \frac{\|\nabla \psi_\rho^k\|_{\Omega_\rho}^2}{2} + \frac{\|\psi_\rho^k\|_{\Omega_\rho}^2}{2 C_{p_\rho}},
\end{equation}
for $\rho \in \lbrace m_1, m_2, f \rbrace$, where $C_{p_\rho} > 0$ denotes the Poincar\'{e} constant of the respective subdomain. %Note that the geometries of $\Omega_{m_1}$ and $\Omega_{m_2}$ are the same, and so are the Poincar\'{e} constants hence, for which reason we set $C_{p_m} := C_{p_{m_1}} = C_{p_{m_2}}$. \\
The geometries of $\Omega_{m_1}$ and $\Omega_{m_2}$ are the same, and so are the Poincar\'{e} constants hence, for which reason we set $C_{p_m} := C_{p_{m_1}} = C_{p_{m_2}}$, but note that for more general geometries, one can simply set $C_{p_m} := \max \lbrace C_{p_{m_1}}, C_{p_{m_2}} \rbrace$ in the following estimates. \\
Making use of this together with Assumption $(A_K)$ and equation \eqref{eq:W_properties}$_2$ in Lemma \ref{lemma:W_properties}, we estimate
\begin{equation}
\begin{aligned}
&\hspace{0.0cm} \sum_{j=1}^2 \int_{\Omega_{m_j}} \mathcal{W}_m(\psi_{m_j}^k) \ d\vec{x} + \varepsilon^\kappa \int_{\Omega_{f}} \mathcal{W}_f(\psi_{f}^k) \ d\vec{x} + \frac{\Delta t \ m_K}{2} \sum_{j=1}^2 \|\nabla \psi_{m_j}^k\|^2_{\Omega_{m_j}} \\
&+ \frac{\Delta t \ m_K}{2} \varepsilon^\lambda \|\nabla \psi_{f}^k\|^2_{\Omega_{f}}  
+ \frac{\Delta t \ m_K}{2 C_{p_m}} \sum_{j=1}^2 \|\psi_{m_j}^k\|^2_{\Omega_{m_j}} + \varepsilon^\lambda \frac{\Delta t \ m_K \|\psi_{f}^k\|^2_{\Omega_{f}}}{2 C_{p_f}} \\
&\leq \sum_{j=1}^2 \int_{\Omega_{m_j}} \mathcal{W}_m(\psi_{m_j}^{k-1}) \ d\vec{x} + \varepsilon^\kappa \int_{\Omega_{f}} \mathcal{W}_f(\psi_{f}^{k-1}) \ d\vec{x} + \frac{\Delta t \ C_{p_m}}{2 m_K} \sum_{j=1}^2 \|f_{m_j}^k\|^2_{\Omega_{m_j}} +\\ &\hspace{0.1cm}  \varepsilon^{-\lambda} \frac{\Delta t \ C_{p_f}}{2 m_K} \|f_{f}^k\|^2_{\Omega_{f}} 
+ \frac{\Delta t \ m_K}{2 C_{p_m}} \sum_{j=1}^2  \|\psi_{m_j}^k\|^2_{\Omega_{m_j}} + \varepsilon^\lambda \frac{\Delta t \ m_K \|\psi_{f}^k\|^2_{\Omega_{f}}}{2 C_{p_f}},
\end{aligned}
\end{equation}
where we applied the Cauchy--Schwarz inequality and Young's inequality. Summing over $k$ from $1$ to $l$ for an arbitrary $1 \leq l \leq N$ leaves us with
\begin{equation}
\begin{aligned}
&\hspace{0.0cm} \sum_{j=1}^2 \int_{\Omega_{m_j}} \mathcal{W}_m(\psi_{m_1}^l) \ d\vec{x} + \varepsilon^\kappa \int_{\Omega_f} \mathcal{W}_f(\psi_f^l) \ d\vec{x} + \frac{\Delta t \ m_K}{2} \sum_{j=1}^2 \sum_{k=1}^l \|\nabla \psi_{m_j}^k\|^2_{\Omega_{m_j}}  \\
& + \varepsilon^\lambda \frac{\Delta t \ m_K}{2} \sum_{k=1}^l \|\nabla \psi_{f}^k\|^2_{\Omega_{f}} 
\leq \sum_{j=1}^2 \int_{\Omega_{m_j}} \mathcal{W}_m(\psi_{{m_j},I}) \ d\vec{x} + \varepsilon^\kappa \int_{\Omega_{f}} \mathcal{W}_f(\psi_{f,I}) \ d\vec{x}  \\
& + \frac{\Delta t \ C_{p_m}}{2 m_K} \sum_{j=1}^2 \sum_{k=1}^l \|f_{m_j}^k\|^2_{\Omega_{m_j}} + \varepsilon^{-\lambda} \frac{\Delta t \ C_{p_f}}{2 m_K}\sum_{k=1}^l \|f_{f}^k\|^2_{\Omega_{f}},
\end{aligned}
\end{equation}
which finishes the proof.
\end{proof}
\begin{remark}[Non-degenerate case]
\label{rmrk:non-degenerate}
In the strictly parabolic case as considered in this work, where an $m_S > 0$ exists such that $0 < m_S \leq S_\rho'(\psi_\rho)$ for all $\psi_\rho \in \mathbb{R}$, we immediately obtain an $L^2$ bound for $\psi_\rho^k$ from the first two terms in Lemma \ref{lemma:a_priori_estimate_discrete} by Lemma \ref{lemma:W_properties}:
\begin{equation}
\begin{aligned}
m_S \frac{\|\psi_{m_j}^k\|_{\Omega_{m_j}}^2}{2} \leq \int_{\Omega_{m_j}} \mathcal{W}_m(\psi_{m_j}^k) \ d\vec{x}, \quad \text{and} \quad m_S \frac{\|\psi_{f}^k\|_{\Omega_{f}}^2}{2} \leq \int_{\Omega_{f}} \mathcal{W}_f(\psi_{f}^k) \ d\vec{x}.
\end{aligned}
\end{equation}
\end{remark}
\end{lemma}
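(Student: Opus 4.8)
The plan is to derive the estimate by the classical energy method for the implicit Euler scheme, testing the discrete weak formulation against its own solution. Since the triple $(\psi_{m_1}^k, \psi_{m_2}^k, \psi_f^k)$ satisfies the interface continuity $\psi_{m_j}^k = \psi_f^k$ on $\Gamma_j$, it is itself an admissible choice in Definition \ref{def:sol_discrete_problem}. Taking $(\phi_{m_1}, \phi_{m_2}, \phi_f) = (\psi_{m_1}^k, \psi_{m_2}^k, \psi_f^k)$ in \eqref{eq:discrete_solution} produces, on the left, the storage increments $(S_\rho(\psi_\rho^k) - S_\rho(\psi_\rho^{k-1}), \psi_\rho^k)_{\Omega_\rho}$ together with the weighted diffusion terms $\Delta t\,(K_\rho(S_\rho(\psi_\rho^k))\nabla\psi_\rho^k, \nabla\psi_\rho^k)_{\Omega_\rho}$, and on the right only the source contributions $\Delta t\,(f_\rho^k, \psi_\rho^k)_{\Omega_\rho}$. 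Crucially, no interface integrals appear, because the flux-continuity transmission condition is already encoded in the weak form through the constrained test space $\mathcal{V}$; this is what makes testing with the solution legitimate and clean.

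Next I would treat each group of terms separately. For the storage increments I invoke the second inequality of Lemma \ref{lemma:W_properties} with $\psi_\rho = \psi_\rho^k$ and $\xi_\rho = \psi_\rho^{k-1}$, which, integrated over $\Omega_\rho$, gives $\int_{\Omega_\rho}[\mathcal{W}_\rho(\psi_\rho^k) - \mathcal{W}_\rho(\psi_\rho^{k-1})]\,d\vec{x} \leq (S_\rho(\psi_\rho^k)-S_\rho(\psi_\rho^{k-1}),\psi_\rho^k)_{\Omega_\rho}$; this is the discrete analogue of a chain rule and is the device that lets the nonlinear time derivative be controlled by a genuine energy. For the diffusion terms I use the lower bound $K_\rho \geq m_K$ from Assumption $(A_K)$ to replace them by $m_K\,\|\nabla\psi_\rho^k\|_{\Omega_\rho}^2$. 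For the source terms I apply Cauchy--Schwarz followed by Young's inequality, splitting each $(f_\rho^k,\psi_\rho^k)_{\Omega_\rho}$ into a piece proportional to $\|f_\rho^k\|_{\Omega_\rho}^2$ and one proportional to $\|\psi_\rho^k\|_{\Omega_\rho}^2$.

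To absorb the $\|\psi_\rho^k\|_{\Omega_\rho}^2$ remainder I would feed half of the diffusion term through the Poincar\'e inequality in the stated form $\|\nabla\psi_\rho^k\|_{\Omega_\rho}^2 \geq \tfrac12\|\nabla\psi_\rho^k\|_{\Omega_\rho}^2 + \tfrac{1}{2C_{p_\rho}}\|\psi_\rho^k\|_{\Omega_\rho}^2$, so that the coercive $\|\psi_\rho^k\|_{\Omega_\rho}^2$ contribution exactly cancels the one generated by Young's inequality. Balancing the Young constant against $m_K$ and $C_{p_\rho}$ fixes the factors $\tfrac{\Delta t\,C_{p_m}}{2m_K}$ and $\varepsilon^{-\lambda}\tfrac{\Delta t\,C_{p_f}}{2m_K}$ on the source terms; the power $\varepsilon^{-\lambda}$ arises precisely because the fracture diffusion carries the weight $\varepsilon^\lambda$, against which the fracture source must be balanced. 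I would keep the weights $\varepsilon^\kappa$ on the storage and $\varepsilon^\lambda$ on the diffusion attached throughout, since their bookkeeping is what later drives the upscaling analysis.

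Finally, summing the resulting per-step inequality over $k = 1,\dots,l$ telescopes the energy differences into $\int_{\Omega_\rho}\mathcal{W}_\rho(\psi_\rho^l)\,d\vec{x} - \int_{\Omega_\rho}\mathcal{W}_\rho(\psi_{\rho,I})\,d\vec{x}$, and taking the maximum over $l \in \{1,\dots,N\}$ of the storage terms yields the claimed bound. The only genuinely delicate point is the handling of the discrete storage term: everything else is a standard coercivity-plus-Young argument, but bounding $(S_\rho(\psi_\rho^k)-S_\rho(\psi_\rho^{k-1}),\psi_\rho^k)_{\Omega_\rho}$ from below by an energy difference hinges entirely on the convexity-type estimate of Lemma \ref{lemma:W_properties}, which in turn rests on the non-degeneracy assumption $(A_S)$; without it the nonlinear time derivative could not be absorbed into a dissipative quantity.
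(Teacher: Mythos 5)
Your proof is correct and follows essentially the same route as the paper's: testing \eqref{eq:discrete_solution} with the solution triple itself, invoking the convexity inequality \eqref{eq:W_properties}$_2$ of Lemma \ref{lemma:W_properties} for the storage increments, the bound $K_\rho \geq m_K$ for the diffusion terms, Cauchy--Schwarz/Young balanced against the split Poincar\'{e} inequality to absorb the $\|\psi_\rho^k\|_{\Omega_\rho}^2$ remainders (with the $\varepsilon^{-\lambda}$ weight on the fracture source), and a telescoping sum over $k$ followed by the maximum over $l$. The only minor imprecision is your closing claim that the convexity estimate rests on the non-degeneracy $m_S > 0$: that inequality needs only the monotonicity $S_\rho' \geq 0$, while the strict lower bound $m_S > 0$ is used afterwards (Remark \ref{rmrk:non-degenerate}) to convert the energy bound into an $L^2$ bound.
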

In what follows, we prove the $L^\infty$ stability of the time-discrete solution. %For this purpose, we borrow techniques from \cite{DevigneEtAl,PopBogersKumar2016}. 
We define the non-negative and non-positive cut of a function $u \in W^{1,2}(\Omega)$ by
\begin{equation}
\pcut{u} := \max\lbrace u, 0 \rbrace, \qquad \ncut{u} := \min\lbrace u, 0 \rbrace.
\end{equation}
Note that $[u]_+, [u]_- \in W^{1,2}(\Omega)$, see e.g. \cite[Lemma 7.6]{GilbargTrudinger}. 
\begin{lemma}[{\it A priori} estimate II]
\label{lemma:l_infty_estimate_discrete}
For each $\Delta t > 0$, $\rho \in \lbrace m_1, m_2, f \rbrace$, and $k \in \lbrace 1, \ldots, N \rbrace$, it holds
\begin{equation}
\|\psi_{\rho}^k\|_{L^\infty(\Omega_{\rho})} \leq M_\psi \left(k \Delta t + 1\right),
\end{equation}
where
\begin{equation}
M_\psi := \max \left\lbrace M_\rho, \frac{M_f}{m_S} \right\rbrace.
\end{equation}
\begin{proof}
The proof is done by induction. For $k = 0$, the statement holds due to Assumption $(A_\rho)$. Assume now that $\|\psi_\rho^{k-1}\|_{L^\infty(\Omega_\rho)} < M_\psi \left((k-1) \Delta t + 1\right).$ First, we show that $\psi_\rho^{k} \leq M_\psi \left(k \Delta t + 1\right)$ almost everywhere in $\Omega_\rho$. \\
We test equation \eqref{eq:discrete_solution} with $\phi_{\rho} = \pcut{\psi_\rho^k - M_\psi(k \Delta t + 1)}$. These test functions satisfy the required transmission condition because $\psi_{m_j}^k = \psi_f^k$ on $\Gamma_j$. Adding some terms on both sides of the equation, we obtain
\begin{equation}
\begin{aligned}
&\hspace{0.0cm}\sum_{j=1}^2 \left(S_m(\psi_{m_j}^k)-S_m\left(M_\psi(k \Delta t + 1)\right), \pcut{\psi_{m_j}^k - M_\psi(k \Delta t + 1)}\right)_{\Omega_{m_j}} \\
&\hspace{0.0cm} + \varepsilon^\kappa \left(S_f(\psi_{f}^k)-S_f\left(M_\psi(k \Delta t + 1)\right), \pcut{\psi_f^k - M_\psi(k \Delta t + 1)}\right)_{\Omega_{f}} \\
&\hspace{0.0cm} + \Delta t \sum_{j=1}^2 \left(K_m(S_m(\psi_{m_j}^k)) \nabla \left(\psi_{m_j}^k - M_\psi(k \Delta t + 1)\right), \nabla \pcut{\psi_{m_j}^k - M_\psi(k \Delta t + 1)}\right)_{\Omega_{m_j}} \\
&\hspace{0.0cm} + \varepsilon^\lambda \Delta t \left(K_f(S_f(\psi_f^k)) \nabla \left(\psi_{f}^k - M_\psi(k \Delta t + 1)\right), \nabla \pcut{\psi_f^k - M_\psi(k \Delta t + 1)}\right)_{\Omega_{f}} \\
&=\sum_{j=1}^2 \left(S_m(\psi_{m_j}^{k-1})-S_m\left(M_\psi(k \Delta t + 1)\right), \pcut{\psi_{m_j}^k - M_\psi(k \Delta t + 1)}\right)_{\Omega_{m_j}}\\
&\hspace{0.0cm} + \varepsilon^\kappa \left(S_f(\psi_{f}^{k-1})-S_f\left(M_\psi(k \Delta t + 1)\right), \pcut{\psi_f^k - M_\psi(k \Delta t + 1)}\right)_{\Omega_{f}} \\
&\hspace{0.0cm}+ \Delta t \sum_{j=1}^2 \left(f_{m_j}^k, \pcut{\psi_{m_j}^k - M_\psi(k \Delta t + 1)}\right)_{\Omega_{m_j}} + \Delta t \left(f_{f}^k, \pcut{\psi_f^k - M_\psi(k \Delta t + 1)}\right)_{\Omega_{f}}.
\end{aligned}
\end{equation} 	
%\newpage
From Assumptions $(A_S)$ and $(A_K)$, and in particular the monotonicity of $S_\rho$, we deduce
\begin{equation}
\begin{aligned}
&\hspace{0.0cm} m_S \sum_{j=1}^2 \Big{\|}\pcut{\psi_{m_j}^k - M_\psi(k \Delta t + 1)}\Big{\|}^2_{\Omega_{m_j}} + \varepsilon^\kappa m_S \Big{\|}\pcut{\psi_{f}^k - M_\psi(k \Delta t + 1)}\Big{\|}^2_{\Omega_{f}} \\
&\hspace{0.2cm} + \Delta t \ m_K \sum_{j=1}^2 \Big{\|}\nabla \pcut{\psi_{m_j}^k - M_\psi(k \Delta t + 1)}\Big{\|}^2_{\Omega_{m_j}}  \\
&\hspace{0.2cm} + \varepsilon^\lambda \Delta t \ m_K \Big{\|}\nabla \pcut{\psi_{f}^k - M_\psi(k \Delta t + 1)}\Big{\|}^2_{\Omega_{f}} \\
&\leq \sum_{j=1}^2 \left(S_m(\psi_{m_j}^{k-1})-S_m\left(M_\psi((k - 1) \Delta t + 1)\right), \pcut{\psi_{m_j}^k - M_\psi(k \Delta t + 1)}\right)_{\Omega_{m_j}}\\
&\hspace{0.2cm} + \varepsilon^\kappa \left(S_f(\psi_{f}^{k-1})-S_f\left(M_\psi((k - 1) \Delta t + 1)\right), \pcut{\psi_f^k - M_\psi(k \Delta t + 1)}\right)_{\Omega_{f}} \\
&\hspace{0.2cm}+ \Delta t \sum_{j=1}^2 \left(\left(f_{m_j}^k - m_S M_\psi\right), \pcut{\psi_{m_j}^k - M_\psi(k \Delta t + 1)}\right)_{\Omega_{m_j}} \!\! \\
&\hspace{0.2cm} + \Delta t \left(\left(f_{f}^k - m_S M_\psi\right), \pcut{\psi_f^k - M_\psi(k \Delta t + 1)}\right)_{\Omega_{f}},
\end{aligned}
\label{eq:L_infty_proof_estimate}
\end{equation}
where we used $S_\rho(\psi_\rho) \geq S_\rho(\xi_\rho) + m_S (\psi_\rho-\xi_\rho)$ on the right hand side in order to get 
\begin{equation}
S_\rho\left(M_\psi((k \Delta t + 1)\right) \geq S_\rho\left(M_\psi((k - 1) \Delta t + 1)\right) + m_S M_\psi \Delta t.
\end{equation}
Note that the first two terms on the right hand side in equation \eqref{eq:L_infty_proof_estimate} are non-positive due to the induction assumption and the monotonicity of $S_\rho$. Since $M_\psi \geq \frac{M_f}{m_S}$, the last two terms are non-positive as well, from which we infer that $\psi_\rho^k \leq M_S (k \Delta t + 1)$ almost everywhere in $\Omega_\rho$. Similarly, one tests equation \eqref{eq:discrete_solution} with $\phi_\rho = \ncut{\psi_\rho^k + M_\psi(k \Delta t + 1)}$ in order to show that $\psi_\rho^{k} \geq -M_\psi \left(k \Delta t + 1\right)$ almost everywhere in $\Omega_\rho$. This concludes the proof.
\end{proof}
\end{lemma}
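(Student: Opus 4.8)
The plan is to prove the bound by induction on the time level $k$, using truncation (cut-off) test functions, which is the standard device for turning a weak formulation into a pointwise a.e.\ bound. The base case $k=0$ is immediate from Assumption $(A_\rho)$, since $|\psi_{\rho,I}| \le M_\rho \le M_\psi$. For the inductive step I assume $\|\psi_\rho^{k-1}\|_{L^\infty(\Omega_\rho)} \le M_\psi((k-1)\Delta t + 1)$ and establish the two one-sided bounds $\psi_\rho^k \le M_\psi(k\Delta t+1)$ and $\psi_\rho^k \ge -M_\psi(k\Delta t+1)$ a.e.\ separately, each by a single well-chosen test function.

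For the upper bound I would test the time-discrete weak formulation \eqref{eq:discrete_solution} with the triple $\phi_\rho = \pcut{\psi_\rho^k - M_\psi(k\Delta t+1)}$. This is admissible because $\psi_{m_j}^k = \psi_f^k$ on $\Gamma_j$ forces the positive cuts to coincide on the interfaces, so the transmission constraint on the test functions holds, and $\pcut{\cdot}\in W^{1,2}$. The idea is then to subtract the constant state $S_\rho\!\left(M_\psi(k\Delta t+1)\right)$ inside each accumulation term, so that monotonicity of $S_\rho$ together with $S_\rho' \ge m_S$ (Assumption $(A_S)$) bounds the left-hand accumulation contributions below by $m_S\|\pcut{\cdot}\|_{\Omega_\rho}^2$, while the diffusion terms are non-negative: by Assumption $(A_K)$ one has $K_\rho \ge m_K > 0$, and since the gradient of the cut equals that of its argument on the set where the cut is active, each such term takes the form $\int_{\Omega_\rho} K_\rho\,|\nabla \pcut{\cdot}|^2 \ge 0$.

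The crux is to arrange \emph{every} right-hand side term to be non-positive. Here I would invoke the lower Lipschitz bound $S_\rho' \ge m_S$ to compare the constant states at two consecutive levels, $S_\rho\!\left(M_\psi(k\Delta t+1)\right) \ge S_\rho\!\left(M_\psi((k-1)\Delta t+1)\right) + m_S M_\psi \Delta t$; the extra drift $m_S M_\psi\Delta t$ is exactly what lets me recombine the source contribution as $(f_\rho^k - m_S M_\psi)$ tested against the non-negative cut. The accumulation remainder $S_\rho(\psi_\rho^{k-1}) - S_\rho\!\left(M_\psi((k-1)\Delta t+1)\right)$ is non-positive by the induction hypothesis and monotonicity, and the source term is non-positive since $M_\psi \ge M_f/m_S$ gives $f_\rho^k \le M_f \le m_S M_\psi$ via Assumption $(A_f)$. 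As the left-hand side is a sum of non-negative quantities and the right-hand side is non-positive, all cuts must vanish, yielding the upper bound; the lower bound follows symmetrically on testing with $\ncut{\psi_\rho^k + M_\psi(k\Delta t+1)}$. I expect the main obstacle to be the bookkeeping across the coupled subdomains—verifying admissibility of the cut-off test functions on $\Gamma_1,\Gamma_2$ so that the interface coupling contributes no stray boundary terms—together with the precise matching of the $m_S M_\psi \Delta t$ drift against the source bound, so that the sign of every right-hand term is controlled simultaneously.
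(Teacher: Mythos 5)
Your proposal is correct and follows essentially the same route as the paper's proof: induction on $k$, testing \eqref{eq:discrete_solution} with the admissible cut $\pcut{\psi_\rho^k - M_\psi(k\Delta t+1)}$, using $S_\rho' \ge m_S$ to absorb the drift $m_S M_\psi \Delta t$ into the source so that $M_\psi \ge M_f/m_S$ makes every right-hand term non-positive, and concluding symmetrically with the negative cut. No gaps; this matches the paper's argument step for step.
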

\subsection{Interpolation in time}
Now, we define functions on a continuous time domain by interpolating the solutions of the time-discrete problem in time. %Since Richards' equation contains the time derivative of the water content, we bound the time derivative of the water content and carry over the results later to the pressure head. In order to obtain time-continuous functions, 
We use piecewise linearly interpolated functions in addition to piecewise constant functions: for almost every $t \in (t_{k-1}, t_k]$ set
\begin{equation}
\begin{aligned}
\bar{\Psi}_{\Delta t}^\rho(t) &:= \psi_\rho^k, \\
\bar{S}_{\Delta t}^\rho(t) &:= S(\psi_\rho^k), \\
\hat{S}_{\Delta t}^\rho(t) &:= S(\psi_\rho^{k-1}) + \frac{t-t_{k-1}}{\Delta t} (S(\psi_\rho^k) - S(\psi_\rho^{k-1})).
\end{aligned}
\end{equation}
Moreover, we need the piecewise constant interpolation of the source term $\bar{f}_{\Delta t}^\rho(t) = f_\rho^k$. 
In view of the {\it a priori} estimates in Lemmas \ref{lemma:a_priori_estimate_discrete} and \ref{lemma:l_infty_estimate_discrete}, we obtain the following result for the interpolated functions:
\begin{lemma}
\label{lemma:interpolation}
The functions $\bar{\Psi}_{\Delta t}^\rho$, $\bar{S}_{\Delta t}^\rho$, and $\hat{S}_{\Delta t}^\rho$ are bounded uniformly with respect to $\Delta t$ in $L^\infty(0,T; L^2(\Omega_\rho)) \cap L^2(0,T;\mathcal{V}_\rho) \cap L^\infty(\Omega_\rho^T)$ for $\rho \in \lbrace m_1, m_2, f \rbrace$.
\end{lemma}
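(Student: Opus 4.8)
The plan is to read all three bounds directly off the two \emph{a priori} estimates, using only the Lipschitz character of the saturation, namely $|S_\rho(\psi)| \le M_S|\psi|$ and $0 < S_\rho' \le M_S$ from $(A_S)$ together with $S_\rho(0)=0$. Throughout, $\varepsilon$ is fixed, so the powers $\varepsilon^\kappa$ and $\varepsilon^\lambda$ appearing in Lemma \ref{lemma:a_priori_estimate_discrete} are harmless positive constants. I would first dispose of the $L^\infty(\Omega_\rho^T)$ bound: Lemma \ref{lemma:l_infty_estimate_discrete} gives $\|\bar\Psi_{\Delta t}^\rho(t)\|_{L^\infty(\Omega_\rho)} = \|\psi_\rho^k\|_{L^\infty(\Omega_\rho)} \le M_\psi(T+1)$ uniformly in $\Delta t$; since $\bar S_{\Delta t}^\rho = S_\rho(\bar\Psi_{\Delta t}^\rho)$ and $\hat S_{\Delta t}^\rho$ is, for each $t$, a convex combination of $S_\rho(\psi_\rho^{k-1})$ and $S_\rho(\psi_\rho^k)$, the Lipschitz bound yields $\|\bar S_{\Delta t}^\rho\|_{L^\infty(\Omega_\rho^T)}, \|\hat S_{\Delta t}^\rho\|_{L^\infty(\Omega_\rho^T)} \le M_S M_\psi(T+1)$.

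For the $L^\infty(0,T;L^2(\Omega_\rho))$ bound I would use the first line of Lemma \ref{lemma:a_priori_estimate_discrete}: combined with the lower bound $m_S\psi^2/2 \le \mathcal{W}_\rho(\psi)$ from Lemma \ref{lemma:W_properties}, the term $\max_l \int_{\Omega_\rho}\mathcal{W}_\rho(\psi_\rho^l)\,d\vec x$ controls $\max_{1\le l\le N}\|\psi_\rho^l\|_{\Omega_\rho}^2$, which is precisely $\operatorname{ess\,sup}_t \|\bar\Psi_{\Delta t}^\rho(t)\|_{\Omega_\rho}^2$; the $L^\infty(0,T;L^2)$ bounds for $\bar S_{\Delta t}^\rho$ and $\hat S_{\Delta t}^\rho$ then follow again from $|S_\rho(\psi)|\le M_S|\psi|$ and convexity. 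It remains to check that the right-hand side of Lemma \ref{lemma:a_priori_estimate_discrete} is itself bounded independently of $\Delta t$: the initial-data terms involve only $\int_{\Omega_\rho}\mathcal{W}_\rho(\psi_{\rho,I}) \le \tfrac{M_S}{2}\|\psi_{\rho,I}\|_{\Omega_\rho}^2$, finite by $(A_\rho)$, while the source terms $\Delta t\sum_{k=1}^N\|f_\rho^k\|_{\Omega_\rho}^2$ form a Riemann sum of $\int_0^T\|f_\rho(t)\|_{\Omega_\rho}^2\,dt$ and are bounded by $T M_f^2|\Omega_\rho|$ using $(A_f)$.

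The $L^2(0,T;\mathcal{V}_\rho)$ bound is the only place where the gradient enters. The $L^2$ part of the $W^{1,2}$ norm is already controlled by $\int_0^T\|\bar\Psi_{\Delta t}^\rho\|_{\Omega_\rho}^2\,dt \le T\,\|\bar\Psi_{\Delta t}^\rho\|_{L^\infty(0,T;L^2)}^2$ (and likewise for $\bar S$ and $\hat S$). For the gradient part of $\bar\Psi_{\Delta t}^\rho$ I would observe that $\int_0^T\|\nabla\bar\Psi_{\Delta t}^\rho\|_{\Omega_\rho}^2\,dt = \Delta t\sum_{k=1}^N\|\nabla\psi_\rho^k\|_{\Omega_\rho}^2$, which is exactly the dissipation term in Lemma \ref{lemma:a_priori_estimate_discrete} (up to the factor $m_K/2$ and the constant $\varepsilon^\lambda$), hence uniformly bounded. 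For $\bar S_{\Delta t}^\rho$ the chain rule gives $\nabla\bar S_{\Delta t}^\rho = S_\rho'(\psi_\rho^k)\nabla\psi_\rho^k$ with $|S_\rho'|\le M_S$, so its gradient is bounded by $M_S\|\nabla\psi_\rho^k\|_{\Omega_\rho}$; for $\hat S_{\Delta t}^\rho$ the spatial gradient is the same convex combination $(1-\theta)\nabla S_\rho(\psi_\rho^{k-1})+\theta\nabla S_\rho(\psi_\rho^k)$ with $\theta = (t-t_{k-1})/\Delta t$, so by Jensen's inequality $\int_0^T\|\nabla\hat S_{\Delta t}^\rho\|_{\Omega_\rho}^2\,dt \le M_S^2\,\Delta t\sum_{k=0}^N\|\nabla\psi_\rho^k\|_{\Omega_\rho}^2$.

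The single obstacle is the $k=0$ contribution $\Delta t\,\|\nabla\psi_\rho^0\|_{\Omega_\rho}^2 = \Delta t\,\|\nabla\psi_{\rho,I}\|_{\Omega_\rho}^2$ arising in the $\hat S$ estimate, which is not covered by Lemma \ref{lemma:a_priori_estimate_discrete}. I expect to resolve it by invoking the natural compatibility requirement $\psi_{\rho,I}\in\mathcal{V}_\rho$, under which this term is of order $\Delta t$ and hence both uniformly bounded and vanishing as $\Delta t\to 0$; the remaining sum $\Delta t\sum_{k=1}^N\|\nabla\psi_\rho^k\|_{\Omega_\rho}^2$ is once more the dissipation term of Lemma \ref{lemma:a_priori_estimate_discrete}. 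Collecting the three estimates for each of the three families $\bar\Psi_{\Delta t}^\rho$, $\bar S_{\Delta t}^\rho$, $\hat S_{\Delta t}^\rho$ then completes the proof.
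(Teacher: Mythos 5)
Your proof is correct and takes essentially the same route as the paper, which states this lemma as an immediate consequence of the \emph{a priori} estimates in Lemmas \ref{lemma:a_priori_estimate_discrete} and \ref{lemma:l_infty_estimate_discrete} without spelling out the details you provide. Your careful handling of the $k=0$ gradient term via the requirement $\psi_{\rho,I}\in\mathcal{V}_\rho$ is legitimate and consistent with the paper, since Definition \ref{def:sol_discrete_problem} already demands that the triple $(\psi_{m_1}^{k-1},\psi_{m_2}^{k-1},\psi_f^{k-1})$, and hence the initial data $\psi_\rho^0=\psi_{\rho,I}$, belong to $\mathcal{V}_{m_1}\times\mathcal{V}_{m_2}\times\mathcal{V}_f$.
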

In order to get strong convergence in $L^2(0,T;L^2(\Omega_\rho))$, we need the following estimate for the time derivative of the saturation:
\begin{lemma}
\label{lemma:partial_t_estimate}
The functions $\hat{S}_{\Delta t}^\rho$ are uniformly bounded with respect to $\Delta t$ in 
\newline $W^{1,2}(0,T;W^{-1,2}(\Omega_\rho))$ for $\rho \in \lbrace m_1, m_2, f \rbrace$.
%There exists a constant $C \geq 0$ independent of $\Delta t$ such that
%\begin{equation}
%\|\partial_t \hat{S}_{\Delta t}^\rho\|_{L^2(0,T;W^{-1,2}(\Omega_\rho))} \leq C.
%\end{equation}
\begin{proof}
Since the function $\hat{S}_{\Delta t}^\rho(t)$ is piecewise linear, its weak time derivative exists, is piecewise constant, and for almost every $t \in (t_{k-1}, t_k]$ given by
\begin{equation}
\partial_t \hat{S}_{\Delta t}^\rho(t) = \frac{S_\rho(\psi_\rho^k) - S_\rho(\psi_\rho^{k-1})}{\Delta t}.
\end{equation}
We view $\partial_t \hat{S}_{\Delta t}^\rho$ as an element of $L^2(0,T;W^{-1,2}(\Omega_{\rho}))$, where $W^{-1,2}(\Omega_{\rho})$ is the dual of $W^{1,2}_0(\Omega_\rho)$ (the latter space containing the $W^{1,2}$ functions on $\Omega_\rho$ with vanishing trace on the entire boundary $\partial \Omega_\rho$). % This is clearly possible due to the natural inclusion $L^2(\Omega_\rho) \hookrightarrow W^{-1,2}(\Omega_\rho)$ given by $u \mapsto (u,\cdot)_{\Omega_\rho}$. \\
Testing equation \eqref{eq:discrete_solution} with arbitrary $\phi_\rho \in W^{1,2}_0(\Omega_\rho)$ and $\phi_{\sigma} \equiv 0$ for $\sigma \neq \rho$ yields the estimate
\begin{equation}
\begin{aligned}
\Big{|}\langle \partial_t \hat{S}_{\Delta t}^\rho(t), \phi_\rho \rangle_{W^{-1,2}(\Omega_\rho),W^{1,2}_0(\Omega_\rho)}\Big{|} &= \Bigg{|}\left(\frac{S_\rho(\psi_\rho^k)-S_\rho(\psi_\rho^{k-1})}{\Delta t}, \phi_\rho\right)_{\Omega_\rho}\Bigg{|} \\
&\leq \Big{|}\left(K_\rho(S_\rho(\psi_\rho^k)) \nabla \psi_\rho^k, \nabla \phi_\rho\right)_{\Omega_\rho}\Big{|} + \Big{|}\left(f_\rho^k, \phi_\rho\right)_{\Omega_\rho}\Big{|} \\
&\leq \|\phi_\rho\|_{W^{1,2}(\Omega_\rho)} \left(M_K \|\nabla \psi_\rho^k\|_{\Omega^\rho} + \|f_\rho^k\|_{\Omega_\rho}\right).
\end{aligned}
\end{equation}
Using the {\it a priori} estimate in Lemma \ref{lemma:a_priori_estimate_discrete}, we obtain
\begin{equation}
\|\partial_t \hat{S}_{\Delta t}^\rho \|_{L^2(0,T;W^{-1,2}(\Omega_\rho))} \leq C,
\label{eq:partial_t_estimate}
\end{equation}
which finishes the proof.
\end{proof}
\end{lemma}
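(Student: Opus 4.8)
The plan is to exploit that the $W^{1,2}(0,T;W^{-1,2}(\Omega_\rho))$ norm of $\hat{S}^\rho_{\Delta t}$ controls the function together with its distributional time derivative, both measured in $L^2(0,T;W^{-1,2}(\Omega_\rho))$. The function part is essentially free: Lemma \ref{lemma:interpolation} already gives a uniform bound for $\hat{S}^\rho_{\Delta t}$ in $L^\infty(0,T;L^2(\Omega_\rho))$, and the continuous embeddings $L^\infty(0,T;L^2) \hookrightarrow L^2(0,T;L^2) \hookrightarrow L^2(0,T;W^{-1,2})$ transfer this to a $\Delta t$-independent bound in $L^2(0,T;W^{-1,2}(\Omega_\rho))$. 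Hence the whole content of the lemma lies in estimating $\partial_t \hat{S}^\rho_{\Delta t}$ in $L^2(0,T;W^{-1,2}(\Omega_\rho))$.

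First I would record that, since $\hat{S}^\rho_{\Delta t}$ is piecewise linear in time, its weak time derivative is the piecewise constant function equal to $(S_\rho(\psi_\rho^k)-S_\rho(\psi_\rho^{k-1}))/\Delta t$ on each subinterval $(t_{k-1},t_k]$. To identify its $W^{-1,2}(\Omega_\rho)$ norm I would pair it against an arbitrary $\phi_\rho \in W^{1,2}_0(\Omega_\rho)$, extended by setting $\phi_\sigma \equiv 0$ for $\sigma \neq \rho$. The crucial observation is that such a test triple is admissible in the discrete weak formulation \eqref{eq:discrete_solution}: because $\phi_\rho$ has vanishing trace on all of $\partial\Omega_\rho$ (in particular on $\Gamma_j$) and the remaining components vanish identically, the transmission constraints $\phi_{m_j}=\phi_f$ on $\Gamma_j$ hold trivially. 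Substituting this test triple into \eqref{eq:discrete_solution} decouples the subdomains: all interface coupling drops out, and one is left with the single-subdomain identity relating the difference quotient to the diffusion and source contributions on $\Omega_\rho$ alone (the fixed factors $\varepsilon^\kappa,\varepsilon^\lambda$ being mere constants here).

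From that identity I would estimate the right-hand side using the uniform bound $M_K$ on $K_\rho$ from Assumption $(A_K)$ together with the Cauchy--Schwarz inequality, obtaining $|\langle \partial_t \hat{S}^\rho_{\Delta t}(t),\phi_\rho\rangle| \leq C\,\|\phi_\rho\|_{W^{1,2}(\Omega_\rho)}\big(\|\nabla\psi_\rho^k\|_{\Omega_\rho}+\|f_\rho^k\|_{\Omega_\rho}\big)$. Taking the supremum over $\phi_\rho$ in the unit ball of $W^{1,2}_0(\Omega_\rho)$ gives a pointwise-in-time bound on the dual norm, which I would then square and integrate, writing $\|\partial_t\hat{S}^\rho_{\Delta t}\|^2_{L^2(0,T;W^{-1,2})} = \Delta t\sum_{k=1}^N \|\cdot\|_{W^{-1,2}}^2$ and so reducing to controlling $\Delta t\sum_k \|\nabla\psi_\rho^k\|^2_{\Omega_\rho}$ and $\Delta t\sum_k\|f_\rho^k\|^2_{\Omega_\rho}$. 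The first sum is bounded uniformly in $\Delta t$ by the {\it a priori} estimate of Lemma \ref{lemma:a_priori_estimate_discrete}, and the second by the boundedness of the sources in Assumption $(A_f)$.

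The only real obstacle is ensuring the resulting constant is genuinely independent of $\Delta t$: this hinges entirely on the $\Delta t$-uniform control of the discrete gradient energy $\Delta t\sum_k\|\nabla\psi_\rho^k\|^2_{\Omega_\rho}$ provided by Lemma \ref{lemma:a_priori_estimate_discrete}, so the estimate is clean precisely because that energy estimate was established first. A minor point to handle carefully is the admissibility and decoupling of the test function across the interface, but this is automatic once one insists on zero trace on the whole boundary of $\Omega_\rho$.
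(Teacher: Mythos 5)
Your proposal is correct and follows essentially the same route as the paper: identify the piecewise-constant weak time derivative, test the discrete formulation \eqref{eq:discrete_solution} with $\phi_\rho \in W^{1,2}_0(\Omega_\rho)$ extended by zero to the other subdomains (which trivially satisfies the transmission constraints), bound via $(A_K)$ and Cauchy--Schwarz, and conclude with the energy estimate of Lemma \ref{lemma:a_priori_estimate_discrete}. Your added remarks---that the $L^2(0,T;W^{-1,2})$ bound on the function itself follows from Lemma \ref{lemma:interpolation}, and that the factors $\varepsilon^\kappa,\varepsilon^\lambda$ are harmless constants at fixed $\varepsilon$ (a point the paper defers to Remark \ref{remark:time_derivative})---only make the argument slightly more explicit, not different.
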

\begin{remark}\label{remark:time_derivative}
Note that the above estimate is independent of $\varepsilon$ for $\Omega_{m_1}, \Omega_{m_2}$. However, for $\Omega_f$, $K$ depends on $\varepsilon$ and  later we make precise the dependence of the above estimate on $\varepsilon$ and show that indeed the above estimate is independent of $\varepsilon$. 
\end{remark}

Compactness arguments give rise to the following convergent subsequences:
\begin{lemma}
There exists a $\Psi_\rho \in L^2(0,T;\mathcal{V}_\rho)$ and a subsequence $\Delta t \to 0$ along which we obtain for $\rho \in \lbrace m_1, m_2, f \rbrace$
\begin{equation}
\begin{aligned}
\lbrace \hat{S}_{\Delta t}^\rho \rbrace_{\Delta t} &\to S_\rho(\Psi_\rho) \qquad &\text{strongly in } &L^2(0,T;L^2(\Omega_\rho)), \\
\lbrace \bar{S}_{\Delta t}^\rho \rbrace_{\Delta t} &\to S_\rho(\Psi_\rho) \qquad &\text{strongly in } &L^2(0,T;L^2(\Omega_\rho)), \\
\lbrace \bar{\Psi}_{\Delta t}^\rho \rbrace_{\Delta t} &\to \Psi_\rho \qquad &\text{strongly in } &L^2(0,T;L^2(\Omega_\rho)), \\
\lbrace \bar{\Psi}_{\Delta t}^\rho \rbrace_{\Delta t} &\rightharpoonup \Psi_\rho \qquad &\text{weakly in } &L^2(0,T;\mathcal{V}_\rho).
\end{aligned}
\label{eq:convergence}
\end{equation}
\begin{proof}
The first convergence follows from the Aubin--Lions--Simon theorem \cite{Aubin,Simon} by the estimates in Lemmas \ref{lemma:a_priori_estimate_discrete} and \ref{lemma:partial_t_estimate}. The convergence of the piecewise linearly interpolated functions implies the convergence of the piecewise constantly interpolated functions towards the same limit function (see e.g. \cite[Lemma 3.2]{LenzingerSchweizer}). The third convergence is a consequence of Assumption $(A_S)$ by virtue of which the inverse function $S_\rho^{-1}$ exists and is Lipschitz continuous. Finally, the weak convergence in $L^2(0,T;\mathcal{V}_\rho)$ is provided by the Eberlein--\v{S}mulian theorem in view of the bounds in Lemma \ref{lemma:a_priori_estimate_discrete}.
\end{proof}
\end{lemma}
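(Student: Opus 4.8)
The plan is to obtain the strong convergences from the Aubin--Lions--Simon compactness theorem applied to the nonlinear quantity $\hat{S}_{\Delta t}^\rho$, and then to transfer them to the pressures $\bar{\Psi}_{\Delta t}^\rho$ by inverting the saturation. By Lemma~\ref{lemma:interpolation} the interpolants $\hat{S}_{\Delta t}^\rho$ are bounded uniformly in $L^2(0,T;\mathcal{V}_\rho)$, while Lemma~\ref{lemma:partial_t_estimate} bounds $\partial_t \hat{S}_{\Delta t}^\rho$ uniformly in $L^2(0,T;W^{-1,2}(\Omega_\rho))$. Along the Gelfand triple $\mathcal{V}_\rho \hookrightarrow\hookrightarrow L^2(\Omega_\rho) \hookrightarrow W^{-1,2}(\Omega_\rho)$, whose first embedding is compact by Rellich--Kondrachov on the bounded subdomain $\Omega_\rho$, the Aubin--Lions--Simon theorem then yields a subsequence $\Delta t \to 0$ along which $\hat{S}_{\Delta t}^\rho$ converges strongly in $L^2(0,T;L^2(\Omega_\rho))$ to some limit $\sigma_\rho$. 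Since there are only the three indices $\rho \in \{m_1,m_2,f\}$, I would extract once for each and retain a single common subsequence.

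Next I would pass the same limit to the piecewise-constant interpolant $\bar{S}_{\Delta t}^\rho$. On each subinterval $(t_{k-1},t_k]$ the two interpolants differ by $\tfrac{t-t_k}{\Delta t}\big(S_\rho(\psi_\rho^k)-S_\rho(\psi_\rho^{k-1})\big)$, and the standard equivalence between piecewise-linear and piecewise-constant Rothe interpolants, cited from \cite[Lemma~3.2]{LenzingerSchweizer}, ensures that $\bar{S}_{\Delta t}^\rho \to \sigma_\rho$ strongly in $L^2(0,T;L^2(\Omega_\rho))$ along the same subsequence.

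The crucial step is to upgrade this to strong convergence of the pressures. Assumption $(A_S)$ makes each $S_\rho$ strictly increasing with $m_S \le S_\rho' \le M_S$, so $S_\rho$ is a bi-Lipschitz bijection of $\mathbb{R}$ and its inverse $S_\rho^{-1}$ is Lipschitz with constant $1/m_S$. Writing $\bar{\Psi}_{\Delta t}^\rho = S_\rho^{-1}(\bar{S}_{\Delta t}^\rho)$ and defining $\Psi_\rho := S_\rho^{-1}(\sigma_\rho)$, Lipschitz continuity gives
\[ \|\bar{\Psi}_{\Delta t}^\rho - \Psi_\rho\|_{L^2(0,T;L^2(\Omega_\rho))} \le \tfrac{1}{m_S}\,\|\bar{S}_{\Delta t}^\rho - \sigma_\rho\|_{L^2(0,T;L^2(\Omega_\rho))} \longrightarrow 0, \]
which is the third convergence, and simultaneously identifies $\sigma_\rho = S_\rho(\Psi_\rho)$, turning the first two convergences into the asserted limits. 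Separately, the uniform $L^2(0,T;\mathcal{V}_\rho)$-bound from Lemma~\ref{lemma:a_priori_estimate_discrete} together with reflexivity (Eberlein--\v{S}mulian) furnishes a weakly convergent subsequence $\bar{\Psi}_{\Delta t}^\rho \rightharpoonup \tilde{\Psi}_\rho$ in $L^2(0,T;\mathcal{V}_\rho)$; since weak $\mathcal{V}_\rho$-convergence implies weak, hence distributional, $L^2$-convergence, uniqueness of limits forces $\tilde{\Psi}_\rho = \Psi_\rho$, so the weak limit lies in $L^2(0,T;\mathcal{V}_\rho)$ and coincides with the strong $L^2$-limit.

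I expect the main obstacle to be exactly this last transfer: the a priori and time-derivative estimates control only the \emph{saturation} $S_\rho(\psi_\rho)$ in a form amenable to Aubin--Lions, not the pressure itself, because the evolution term in Problem~$\mathcal{P}_\varepsilon$ is $\partial_t S_\rho(\psi_\rho)$. Compactness therefore lands on $\hat{S}_{\Delta t}^\rho$, and strong convergence of $\bar{\Psi}_{\Delta t}^\rho$ can be recovered only because the strict lower bound $m_S>0$ in $(A_S)$ renders $S_\rho^{-1}$ Lipschitz; in the degenerate case $m_S=0$ this argument breaks down, which is precisely why the analysis is confined to the non-degenerate regime.
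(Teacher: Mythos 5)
Your proposal is correct and follows essentially the same route as the paper: Aubin--Lions--Simon (via the a priori and dual-norm time-derivative estimates) for the strong convergence of $\hat{S}_{\Delta t}^\rho$, the standard Rothe-interpolant equivalence for $\bar{S}_{\Delta t}^\rho$, the Lipschitz inverse $S_\rho^{-1}$ guaranteed by $(A_S)$ to transfer strong convergence to the pressures, and reflexivity/Eberlein--\v{S}mulian for the weak $L^2(0,T;\mathcal{V}_\rho)$ limit. You merely spell out details the paper leaves implicit (the Gelfand triple, the identification $\sigma_\rho = S_\rho(\Psi_\rho)$, and the matching of the weak and strong limits), which is a faithful expansion rather than a different argument.
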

It remains to show that the triple of limit functions is a weak solution:
\begin{theorem}
\label{thrm:existence}
The limit $(\Psi_{m_1}, \Psi_{m_2}, \Psi_f)$ is a weak solution to Problem $\mathcal{P}_\varepsilon$ in the sense of Definition \ref{def:weak_solution}.
\begin{proof}
Let $(\phi_{m_1},\phi_{m_2},\phi_f) \in \mathcal{V}_{m_1} \times \mathcal{V}_{m_2} \times \mathcal{V}_{f}$. Summing \eqref{eq:discrete_solution} from $1$ to $k$ yields for almost every $t \in (t_{k-1},t_k)$
\begin{equation}
\begin{aligned}
&\hspace{0.3cm} \sum_{j=1}^2 \left(S_m(\bar{\Psi}_{\Delta t}^{m_j}(t)), \phi_{m_j} \right)_{\Omega_{m_j}} + \varepsilon^\kappa \left(S_f(\bar{\Psi}_{\Delta t}^{f}(t)), \phi_f \right)_{\Omega_f} - \sum_{j=1}^2 \left(S_m(\psi_{m_j,I}), \phi_{m_j} \right)_{\Omega_{m_j}}  \\
&\hspace{0.2cm} - \varepsilon^\kappa \left(S_f(\psi_{f,I}), \phi_f \right)_{\Omega_{f}} 
 + \sum_{j=1}^2 \int_0^t \left(K_m(S_m(\bar{\Psi}_{\Delta t}^{m_j}(\tau)))\nabla \bar{\Psi}_{\Delta t}^{m_j}(\tau), \nabla \phi_{m_j} \right)_{\Omega_{m_j}} \ d\tau  \\
&\hspace{0.2cm} + \varepsilon^\lambda \int_0^t \left(K_f(S_f(\bar{\Psi}_{\Delta t}^{f}(\tau)))\nabla \bar{\Psi}_{\Delta t}^{f}(\tau), \nabla \phi_{f} \right)_{\Omega_{f}} d\tau  - \sum_{j=1}^2 \int_0^t \left(\bar{f}_{m_j}(\tau), \phi_{m_j}\right)_{\Omega_{m_j}} d\tau \\
& - \int_0^t \left(\bar{f}_{f}(\tau), \phi_{f}\right)_{\Omega_{f}} d\tau  = \sum_{j=1}^2 \int_t^{t_k} \left(\bar{f}_{m_j}(\tau), \phi_{m_j}\right)_{\Omega_{m_j}} d\tau + \int_t^{t_k} \left(\bar{f}_{f}(\tau), \phi_{f}\right)_{\Omega_{f}} d\tau \\
&\hspace{3.5cm} - \sum_{j=1}^2 \int_t^{t_k} \left(K_m(S_m(\bar{\Psi}_{\Delta t}^{m_j}(\tau)))\nabla \bar{\Psi}_{\Delta t}^{m_j}(\tau), \nabla \phi_{m_j} \right)_{\Omega_{m_j}} d\tau \\
&\hspace{3.5cm}   - \varepsilon^\lambda \int_t^{t_k} \left(K_f(S_f(\bar{\Psi}_{\Delta t}^{f}(\tau)))\nabla \bar{\Psi}_{\Delta t}^{f}(\tau), \nabla \phi_{f} \right)_{\Omega_{f}} d\tau.
\end{aligned}
\label{eq:existence_proof_1}
\end{equation}

The terms on the right hand side correct the error made on the left hand side by integrating to $t$ instead of $t_k$. Now, we choose test functions $(\phi_{m_1}, \phi_{m_2}, \phi_f) \in L^2(0,T;\mathcal{V}_{m_1}) \times L^2(0,T;\mathcal{V}_{m_2}) \times L^2(0,T;\mathcal{V}_f)$ fulfilling $\phi_{m_j} = \phi_f$ on $\Gamma_j$ and integrate in time from $0$ to $T$ to get
\begin{equation}
\begin{aligned}
&\hspace{0.3cm} \sum_{j=1}^2 \int_0^T \left(S_m(\bar{\Psi}_{\Delta t}^{m_j}(t)), \phi_{m_j}(t) \right)_{\Omega_{m_j}} dt + \varepsilon^\kappa \int_0^T\left(S_f(\bar{\Psi}_{\Delta t}^{f}(t)), \phi_f(t) \right)_{\Omega_f} dt \\
&\hspace{0.2cm} - \sum_{j=1}^2 \int_0^T \left(S_m(\psi_{m_j,I}), \phi_{m_j}(t) \right)_{\Omega_{m_j}} dt - \varepsilon^\kappa \int_0^T \left(S_f(\psi_{f,I}), \phi_f(t) \right)_{\Omega_{f}} dt \\
&\hspace{0.2cm} + \sum_{j=1}^2 \int_0^T \int_0^t \left(K_m(S_m(\bar{\Psi}_{\Delta t}^{m_j}(\tau)))\nabla \bar{\Psi}_{\Delta t}^{m_j}(\tau), \nabla \phi_{m_j}(t) \right)_{\Omega_{m_j}} d\tau \ dt \\
&\hspace{0.2cm} + \varepsilon^\lambda \int_0^T \int_0^t \left(K_f(S_f(\bar{\Psi}_{\Delta t}^{f}(\tau)))\nabla \bar{\Psi}_{\Delta t}^{f}(\tau), \nabla \phi_{f}(t) \right)_{\Omega_{f}} d\tau \ dt \\
&\hspace{0.2cm} - \sum_{j=1}^2 \int_0^T \int_0^t \left(\bar{f}_{m_j}(\tau), \phi_{m_j}(t) \right)_{\Omega_{m_j}} d\tau \ dt - \int_0^T \int_0^t \left(\bar{f}_{f}(\tau), \phi_{f}(t) \right)_{\Omega_{f}} d\tau \ dt \\
&= \sum_{j=1}^2 \sum_{k=1}^N \int_{t_{k-1}}^{t_k} \int_t^{t_k} \left(\bar{f}_{m_j}(\tau), \phi_{m_j}(t) \right)_{\Omega_{m_j}} d\tau \ dt + \sum_{k=1}^N \int_{t_{k-1}}^{t_k} \int_t^{t_k} \left(\bar{f}_{f}(\tau), \phi_{f}(t) \right)_{\Omega_{f}} d\tau \ dt \\
&\hspace{0.2cm} - \sum_{j=1}^2 \sum_{k=1}^N \int_{t_{k-1}}^{t_k} \int_t^{t_k} \left(K_m(S_m(\bar{\Psi}_{\Delta t}^{m_j}(\tau)))\nabla \bar{\Psi}_{\Delta t}^{m_j}(\tau), \nabla \phi_{m_j}(t) \right)_{\Omega_{m_j}} d\tau \ dt \\
&\hspace{0.2cm} - \varepsilon^\lambda \sum_{k=1}^N \int_{t_{k-1}}^{t_k} \int_t^{t_k} \left(K_f(S_f(\bar{\Psi}_{\Delta t}^{f}(\tau)))\nabla \bar{\Psi}_{\Delta t}^{f}(\tau), \nabla \phi_{f}(t) \right)_{\Omega_{f}} d\tau \ dt.
\end{aligned}
\label{eq:existence_proof_2}
\end{equation}
From the strong $L^2$ convergence in \eqref{eq:convergence}$_2$, we infer that
\begin{equation}
\begin{aligned}
&\int_0^T \left(S_\rho(\bar{\Psi}_{\Delta t}^{\rho}(t)), \phi_{\rho}(t) \right)_{\Omega_{\rho}} dt \to \int_0^T \left(S_\rho(\Psi_{\rho}(t)), \phi_{\rho}(t) \right)_{\Omega_{\rho}} dt.
\end{aligned}
\end{equation}
Furthermore, the strong convergence of $\bar{\Psi}_{\Delta t}^{\rho}$ in $L^2(0,T;L^2(\Omega_\rho))$ in equation \eqref{eq:convergence}$_3$ and the weak convergence of the gradients in $L^2(0,T;L^2(\Omega_\rho))$ in equation \eqref{eq:convergence}$_4$ together with the Lipschitz continuity of $S_\rho$ and $K_\rho$ yield
\begin{equation}
\begin{aligned}
\int_0^T \int_0^t &\left(K_\rho(S_\rho(\bar{\Psi}_{\Delta t}^{\rho}(\tau)))\nabla \bar{\Psi}_{\Delta t}^{\rho}(\tau), \nabla \phi_{\rho}(t) \right)_{\Omega_{\rho}} d\tau \ dt  \\
\to &\int_0^T \int_0^t \left(K_\rho(S_\rho(\Psi_{\rho}(\tau)))\nabla \Psi_{\rho}(\tau), \nabla \phi_{\rho}(t) \right)_{\Omega_{\rho}} d\tau \ dt.
\end{aligned}
%\tag{\ensuremath{\ast}}
\end{equation}
%%% I am letting it remain
This follows from the following considerations: due to the ellipticity  of $K_\rho$, $\bar{\Psi}_{\Delta t}^{\rho}$ strongly converges in $L^2(0,T;L^2(\Omega_\rho))$ and because of the Lipschitz continuity of $K_\rho$ and $S$, $K_\rho(S_\rho(\bar{\Psi}_{\Delta t}^{\rho}(\tau)))$ converges to $K_\rho(S_\rho(\Psi_\rho(\tau)))$ strongly in $L^2(0,T;L^2(\Omega_\rho))$. Due to boundedness of $K_\rho$ we know that there exists a $\vec{\xi}_{\rho} \in \left(L^2(\Omega_{\rho})\right)^d$ such that
\begin{equation}
K_\rho(S_\rho(\bar{\Psi}_{\Delta t}^{\rho}(\tau)))\nabla \bar{\Psi}_{\Delta t}^{\rho}(\tau) \rightharpoonup \vec{\xi}_{\rho}, \qquad \text{weakly in } \left(L^2(\Omega_{\rho})\right)^d.
\end{equation}
The identification of $\vec{\xi}_{\rho}$ to $K_\rho(S_\rho(\Psi_\rho))\nabla \Psi_\rho$ then follows by taking smoother test functions and passing to the limit. 

%Due to the finite measure of $\Omega_{m_j}$, this also implies
%\begin{equation}
%K_m(S_m(\bar{\Psi}_{\Delta t}^{m_j}(\tau)))\nabla \bar{\Psi}_{\Delta t}^{m_j}(\tau) \rightharpoonup \vec{\xi}_{m_j}, \qquad \text{weakly in } \left(L^1(\Omega_{m_j})\right)^d.
%\end{equation}
%From the Strong-weak convergence theorem \ref{thrm:strongweak}, we infer that
%\begin{equation}
%K_m(S_m(\bar{\Psi}_{\Delta t}^{m_j}(\tau)))\nabla \bar{\Psi}_{\Delta t}^{m_j}(\tau) \rightharpoonup K_m(S_m(\Psi_{m_j}(\tau)))\nabla \Psi_{m_j}(\tau), \qquad \text{weakly in } \left(L^1(\Omega_{m_j})\right)^d,
%\end{equation}
%in view of the strong convergence from equation \eqref{eq:strong_convergence}, the Lipschitz continuity of $S_m$ and $K_m$, and the weak convergence from equation \eqref{eq:weak_convergence}. From the uniqueness of the weak limit in $\left(L^1(\Omega_{m_j})\right)^d$, we deduce that 
%\begin{equation}
%\vec{\xi}_{m_j} = K_m(S_m(\Psi_{m_j}(\tau)))\nabla \Psi_{m_j}(\tau),
%\end{equation}
%which proves equation ($\ast$). \\
%%% Till this part
Moreover, the time-continuity of $f$ gives
\begin{equation}
\int_0^T \int_0^t \left(\bar{f}_{\rho}(\tau), \phi_{\rho}(t) \right)_{\Omega_{\rho}} d\tau \ dt \to \int_0^T \int_0^t \left(f_{\rho}(\tau), \phi_{\rho}(t) \right)_{\Omega_{\rho}} d\tau \ dt.
\end{equation}
% For the fracture solution, we obtain analogous results with the same arguments. \\
In what follows, we show that the terms on the right hand side in equation \eqref{eq:existence_proof_2} vanish as $\Delta t$ approaches zero. For the terms involving a source term $f_{\rho}$, we obtain
\begin{equation}
\Bigg{|} \sum_{k=1}^N \int_{t_{k-1}}^{t_k} \int_t^{t_k} \left(\bar{f}_{\rho}(\tau), \phi_{\rho}(t) \right)_{\Omega_{\rho}} d\tau \ dt \Bigg{|} \leq \frac{(\Delta t)^2}{2} \sum_{k=1}^N \|f^k_{\rho}\|_{\Omega_{\rho}}^2 + \frac{\Delta t}{2} \int_0^T \|\phi_{\rho}\|_{\Omega_{\rho}}^2 \leq C \Delta t,
\end{equation}
where we used the Cauchy--Schwarz inequality and Young's inequality. \\
Furthermore, we get
\begin{equation}
\begin{aligned}
&\hspace{.2cm} \Bigg{|} \sum_{k=1}^N \int_{t_{k-1}}^{t_k} \int_t^{t_k} \left(K_\rho(S_\rho(\bar{\Psi}_{\Delta t}^{\rho}(\tau)))\nabla \bar{\Psi}_{\Delta t}^{\rho}(\tau), \nabla \phi_{\rho}(t) \right)_{\Omega_{\rho}} d\tau \ dt \Bigg{|} \\
&\leq \frac{(\Delta t)^2 M_K}{2} \sum_{k=1}^N \|\nabla \psi^k_{\rho}\|_{\Omega_{\rho}}^2 + \frac{\Delta t}{2} \int_0^T \|\nabla \phi_{\rho}\|_{\Omega_{\rho}}^2 \\
&\leq C \Delta t,
\end{aligned}
\end{equation}
using the {\it a priori} estimate in Lemma \ref{lemma:a_priori_estimate_discrete}. 
% Estimates for the correspondent terms for the fracture solution follow similarly. 
Therefore, in the limit $\Delta t \to 0$, we are left with
\begin{equation}
\begin{aligned}
&\hspace{0.3cm} \sum_{j=1}^2 \int_0^T \left(S_m(\Psi_{m_j}(t)), \phi_{m_j}(t) \right)_{\Omega_{m_j}} dt + \varepsilon^\kappa \int_0^T\left(S_f(\Psi_{f}(t)), \phi_f(t) \right)_{\Omega_f} dt \\
&\hspace{0.2cm} + \sum_{j=1}^2 \int_0^T \int_0^t \left(K_m(S_m(\Psi_{m_j}(\tau)))\nabla \Psi_{m_j}(\tau), \nabla \phi_{m_j}(t) \right)_{\Omega_{m_j}} d\tau \ dt \\
&\hspace{0.2cm} + \varepsilon^\lambda \int_0^T \int_0^t \left(K_f(S_f(\Psi_{f}(\tau)))\nabla \Psi_{m_j}(\tau), \nabla \phi_{f}(t) \right)_{\Omega_{f}} d\tau \ dt \\
&= \sum_{j=1}^2 \int_0^T \int_0^t \left(f_{m_j}(\tau), \phi_{m_j}(t) \right)_{\Omega_{m_j}} d\tau \ dt + \int_0^T \int_0^t \left(f_{f}(\tau), \phi_{f}(t) \right)_{\Omega_{f}} d\tau \ dt \\
&\hspace{0.2cm} + \sum_{j=1}^2 \int_0^T \left(S_m(\psi_{m_j,I}), \phi_{m_j}(t) \right)_{\Omega_{m_j}} dt + \varepsilon^\kappa \int_0^T \left(S_f(\psi_{f,I}), \phi_f(t) \right)_{\Omega_{f}} dt,
\end{aligned}
\label{eq:existence_proof_3}
\end{equation}
for all $(\phi_{m_1}, \phi_{m_2}, \phi_f) \in L^2(0,T;\mathcal{V}_{m_1}) \times L^2(0,T;\mathcal{V}_{m_2}) \times L^2(0,T;\mathcal{V}_{f})$ such that $\phi_{m_j} = \phi_f$ on $\Gamma_j$ for $j \in \lbrace 1, 2 \rbrace$. \\
Note that when choosing test functions $(\tilde{\phi}_{m_1}, \tilde{\phi}_{m_2}, \tilde{\phi}_f) \in W^{1,2}(0,T;\mathcal{V}_{m_1})$ \newline $\times W^{1,2}(0,T;\mathcal{V}_{m_2})$ $\times W^{1,2}(0,T;\mathcal{V}_{f})$ satisfying $\tilde{\phi}_{m_1}(T) = \tilde{\phi}_{m_2}(T) = \tilde{\phi}_f(T) = 0$, integration by parts yields
\begin{equation}
\begin{aligned}
& \int_0^T \int_0^t \left(f_{\rho}(\tau), \partial_t \tilde{\phi}_{\rho}(t)\right)_{\Omega_{\rho}} d\tau \ dt = - \int_0^T \left(f_{\rho}(t), \tilde{\phi}_{\rho}(t)\right)_{\Omega_{\rho}} dt, \\
 & \int_0^T \int_0^t \left(K_\rho(S_\rho(\Psi_{\rho}(\tau))) \nabla \Psi_{\rho}(\tau), \nabla \partial_t \tilde{\phi}_{\rho}(t)\right)_{\Omega_{\rho}} d\tau \ dt \\
&  = - \int_0^T \left(K_\rho(S_\rho(\Psi_{\rho}(t))) \nabla \Psi_{\rho}(t), \nabla \tilde{\phi}_{\rho}(t)\right)_{\Omega_{\rho}} dt.&
\end{aligned}
\end{equation}
%and similarly for the terms associated with the fracture. \\
Thus, selecting $\phi_\rho = \partial_t \tilde{\phi}_\rho$ in \eqref{eq:existence_proof_3} gives
\begin{equation}
\begin{aligned}
&\hspace{0.3cm} \sum_{j=1}^2 \int_0^T \left(S_m(\Psi_{m_j}(t)), \partial_t \tilde{\phi}_{m_j}(t) \right)_{\Omega_{m_j}} dt + \varepsilon^\kappa \int_0^T\left(S_f(\Psi_{f}(t)), \partial_t \tilde{\phi}_f(t) \right)_{\Omega_f} dt \\
&\hspace{0.2cm} - \sum_{j=1}^2 \int_0^T \left(K_m(S_m(\Psi_{m_j}(t)))\nabla \Psi_{m_j}(t), \nabla \tilde{\phi}_{m_j}(t) \right)_{\Omega_{m_j}} dt \\
&\hspace{0.2cm} - \varepsilon^\lambda \int_0^T \left(K_f(S_f(\Psi_{f}(t)))\nabla \Psi_{m_j}(t), \nabla \tilde{\phi}_{f}(t) \right)_{\Omega_{f}} dt \\
&= - \sum_{j=1}^2 \int_0^T \left(f_{m_j}(t), \tilde{\phi}_{m_j}(t) \right)_{\Omega_{m_j}} dt - \int_0^T \left(f_{f}(t), \tilde{\phi}_{f}(t) \right)_{\Omega_{f}} dt \\
&\hspace{0.2cm} - \sum_{j=1}^2  \left(S_m(\psi_{m_j,I}), \tilde{\phi}_{m_j}(0) \right)_{\Omega_{m_j}} - \varepsilon^\kappa \left(S_f({\psi}_{f,I}), \tilde{\phi}_f(0) \right)_{\Omega_{f}}.
\end{aligned}
\end{equation}
Therefore, equation \eqref{eq:weak_solution} holds true for all appropriate test functions. \\
In order to show that the interface conditions are satisfied, we estimate
\begin{equation}
\|\Psi_{m_j} - \Psi_f\|_{\Gamma_j^T} \leq \|\Psi_{m_j} - \bar{\Psi}_{\Delta t}^{m_j}\|_{\Gamma_j^T} + \|\bar{\Psi}_{\Delta t}^{m_j} - \bar{\Psi}_{\Delta t}^{f}\|_{\Gamma_j^T} + \|\bar{\Psi}_{\Delta t}^f - \Psi_f\|_{\Gamma_j^T}
\label{eq:existence_proof_4}
\end{equation}
and consider the terms on the right hand side individually. The second term is zero by definition of the discrete weak solution. For the first term, we obtain by the trace inequality %{\color{red} reference}
\begin{equation}
\begin{aligned}
&\|\Psi_{m_j} - \bar{\Psi}_{\Delta t}^{m_j}\|^2_{\Gamma_j^T} \\
&\leq C(\Omega_{m_j}) \|\Psi_{m_j} - \bar{\Psi}_{\Delta t}^{m_j}\|_{\Omega_{m_j}^T} \left(\|\nabla \Psi_{m_j} - \nabla \bar{\Psi}_{\Delta t}^{m_j}\|_{\Omega_{m_j}^T} +  \|\Psi_{m_j} - \bar{\Psi}_{\Delta t}^{m_j}\|_{\Omega_{m_j}^T}\right).
\end{aligned}
\end{equation}
By the weak convergence in equation \eqref{eq:convergence}$_4$, the term in brackets is bounded, and from the strong convergence in equation \eqref{eq:convergence}$_3$, we get that $\|\Psi_{m_j} - \bar{\Psi}_{\Delta t}^{m_j}\|_{\Omega_{m_j}^T} \to 0$ for $\Delta t \to 0$. The third term on the right hand side of equation \eqref{eq:existence_proof_4} vanishes with an analogous argument, which finishes the proof.
\end{proof}
\end{theorem}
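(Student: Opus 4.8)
The plan is to pass to the limit $\Delta t \to 0$ in a telescoped version of the time-discrete weak formulation \eqref{eq:discrete_solution} and then recover the weak formulation \eqref{eq:weak_solution} of Definition \ref{def:weak_solution} by a time integration by parts. Throughout, $\varepsilon > 0$ is fixed, so the prefactors $\varepsilon^\kappa$ and $\varepsilon^\lambda$ are harmless constants. First I would sum \eqref{eq:discrete_solution} from the first time step up to the index $k$ with $t \in (t_{k-1}, t_k]$; this produces an identity in which the accumulated saturation, diffusion, and source contributions are expressed through the interpolants $\bar{\Psi}_{\Delta t}^\rho$ and $\bar{f}_{\Delta t}^\rho$, plus correction terms accounting for the mismatch between integrating up to $t$ and up to $t_k$. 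I would then test against time-dependent functions in $L^2(0,T;\mathcal{V}_{m_1}) \times L^2(0,T;\mathcal{V}_{m_2}) \times L^2(0,T;\mathcal{V}_f)$ respecting $\phi_{m_j} = \phi_f$ on $\Gamma_j$, and integrate over $(0,T)$.

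Next I would pass to the limit term by term. The saturation terms converge by the strong $L^2$ convergence $\bar{S}_{\Delta t}^\rho \to S_\rho(\Psi_\rho)$ recorded in \eqref{eq:convergence}, and the source terms converge by the time-continuity of $f_\rho$ from $(A_f)$. The correction terms are controlled by the Cauchy--Schwarz and Young inequalities together with the bounds of Lemma \ref{lemma:a_priori_estimate_discrete}, giving a quantity of order $\Delta t$ that vanishes in the limit. The diffusive terms are the delicate ones: here I would use that $\bar{\Psi}_{\Delta t}^\rho \to \Psi_\rho$ strongly in $L^2(0,T;L^2(\Omega_\rho))$, combined with the Lipschitz continuity and boundedness of $K_\rho \circ S_\rho$ (Assumptions $(A_{D_K})$, $(A_K)$, $(A_S)$), to conclude that $K_\rho(S_\rho(\bar{\Psi}_{\Delta t}^\rho)) \to K_\rho(S_\rho(\Psi_\rho))$ strongly in $L^2$, while $\nabla \bar{\Psi}_{\Delta t}^\rho \rightharpoonup \nabla \Psi_\rho$ weakly in $L^2$ by \eqref{eq:convergence}.

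The main obstacle is the identification of the weak limit of the nonlinear flux $K_\rho(S_\rho(\bar{\Psi}_{\Delta t}^\rho)) \nabla \bar{\Psi}_{\Delta t}^\rho$. Since $K_\rho$ is bounded, this product is bounded in $L^2$ and hence converges weakly along a subsequence to some $\vec{\xi}_\rho$; to identify $\vec{\xi}_\rho = K_\rho(S_\rho(\Psi_\rho)) \nabla \Psi_\rho$ I would pair the product with a smooth test function, transfer the strongly convergent factor $K_\rho(S_\rho(\bar{\Psi}_{\Delta t}^\rho))$ onto it (exploiting its uniform $L^\infty$ bound and strong $L^2$ convergence), and then invoke the weak convergence of the gradient. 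This is the classical strong-times-weak argument, which works cleanly here only because $(A_K)$ furnishes the $L^\infty$ bound on the coefficient.

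Once the integrated-in-time identity \eqref{eq:existence_proof_3} is established, I would choose $\phi_\rho = \partial_t \tilde{\phi}_\rho$ with $\tilde{\phi}_\rho \in W^{1,2}(0,T;\mathcal{V}_\rho)$ satisfying $\tilde{\phi}_\rho(T) = 0$, and integrate by parts in time; the boundary contributions at $t = 0$ reproduce exactly the initial-data terms of \eqref{eq:weak_solution}, yielding the weak formulation of Definition \ref{def:weak_solution}. Finally, the interface conditions $\Psi_{m_j} = \Psi_f$ on $\Gamma_j$ follow from a triangle inequality: the discrete interpolants $\bar{\Psi}_{\Delta t}^{m_j}$ and $\bar{\Psi}_{\Delta t}^f$ match on $\Gamma_j$ by construction, while the two remaining differences are bounded via a trace inequality whose right-hand side pairs the uniformly bounded gradients (weak convergence in $\mathcal{V}_\rho$) with the vanishing $L^2$ difference (strong convergence in \eqref{eq:convergence}), and thus tends to zero as $\Delta t \to 0$.
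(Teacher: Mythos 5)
Your proposal is correct and follows essentially the same route as the paper's proof: the telescoped discrete formulation with correction terms of order $\Delta t$, the term-by-term limit passage using the strong convergence of the saturations, the strong-times-weak identification of the nonlinear flux limit, the time integration by parts to recover the initial-data terms, and the triangle-inequality-plus-trace argument for the interface conditions. No gaps to report.
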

\begin{remark} \label{rem:partial_time_derivative}
For fixed $\varepsilon$, the estimate in the fracture can be improved by isolating the equation in the fracture. By carrying out the same procedure as above but with $\phi_f \in L^2(0,T; W_0^{1,2}(\Omega_f))$, that is, having zero boundary values on $\Omega_f$, we obtain 
\begin{equation}
\begin{aligned}
\partial_t S_f (\bar{\Psi}_{\Delta t}^f) &\rightharpoonup  \partial_t S_f(\psi_f) &\text{ weakly in } L^2(0,T; W^{-1,2}(\Omega_f)), \\
K_{f} (S_f (\bar{\Psi}_{\Delta t}^f)) \nabla \bar{\Psi}_{\Delta t}^f  &\rightharpoonup  K_{f}(S_f(\psi_f)) \nabla \psi_f  &\text{ weakly in } L^2(0,T; W_0^{1,2}(\Omega_f)),
\end{aligned}
\end{equation}
and $\psi_f$ satisfies the equation, 
\begin{equation}
\begin{aligned}
&\hspace{.3cm}  \varepsilon^\kappa \scalOfT{\partial_t S_f(\psi_f), \phi_f} + \varepsilon^\lambda \scalOfT{K_f(S_f(\psi_f))\nabla \psi_f, \nabla \phi_f} 
=  \scalOfT{f_f, \phi_f},
\label{eq:weak_solution_improved}
\end{aligned}
\end{equation}
for all $ \phi_f \in L^{2}(0,T;W^{1,2}_0(\Omega_f))$.

\end{remark}
%-------- Rigorous upscaling ---------------
\section{Rigorous upscaling}
\label{sec:upscaling}
In this section, we prove the convergence of Problem $\mathcal{P}_\varepsilon$ towards effective models in the limit $\varepsilon \to 0$ by means of rigorous upscaling. 
%{\color{red} We demonstrate the upscaling exemplarily for the cases $\kappa = \lambda = -1$ and $\kappa = \lambda = 0$, for which numerical simulations are presented in the following section, but other choices of the scaling parameters can be treated similarly (see appendix). } \\
% We present the upscaling for the case $\kappa = \lambda = -1$, but other choices of the scaling parameters can be treated similarly. This case corresponds to a scenario in which the ratios of the porosities and hydraulic conductivities between the fracture and the matrix blocks scale with $\varepsilon^{-1}$. Then, the flow in the resulting effective model is governed by the one-dimensional Richards' equation, as we will prove in what follows. \\
We present the upscaling for the parameter range $(\kappa, \lambda) \in [-1, \infty) \times (-\infty, 1)$ except for the case when $\kappa = -1, \lambda \in (-1,1)$. This corresponds to scenarios in which the inverse fracture width is an upper bound for the ratio of the fracture porosity to the matrix  and the fracture width is a lower bound for the ratio of the fracture hydraulic conductivity to the matrix. \\
We employ techniques from \cite{vanDuijnPop}, where upscaling was considered in the context of crystal dissolution and precipitation, and \cite{PopBogersKumar2016}, which is concerned with the upscaling of a reactive transport model. For a more detailed presentation of the results, we refer the reader to \cite{ListThesis}.
\subsection{Kirchhoff transformation and rescaling of the geometry}
%In order to avoid averaging the non-linear flux term for the upscaling, 
We apply the Kirchhoff transform (see \cite{AltLuckhaus}) in each subdomain. For this, we introduce a function
\begin{equation}
\mathcal{K}_\rho: \mathbb{R} \to \mathbb{R}, \qquad u_\rho := \mathcal{K}_\rho(\psi_\rho) = \int_0^{\psi_\rho} K_\rho(S_\rho(\varphi)) \ d\varphi.
\end{equation}
Due to the assumptions on $K_\rho$ and $S_\rho$, the Kirchhoff transformation is invertible and one can define the property
\begin{equation}
b_\rho(u_\rho) := S_\rho \circ \mathcal{K}_\rho^{-1}(u_\rho).
\end{equation}
Note that $b_\rho$ is Lipschitz continuous due to the Lipschitz continuity of $S_\rho$ and $\mathcal{K}_\rho^{-1}$. \\
By the chain rule, one obtains $\nabla u_\rho = K_\rho(S_\rho(\psi_\rho)) \nabla \psi_\rho$, which transforms Problem $\mathcal{P}_\varepsilon$ into a semi-linear problem. Since $\mathcal{K}_\rho$ is Lipschitz continuous, the Kirchhoff transformed problem is equivalent to the original problem \cite{MarcusMizel},  and all {\it a priori} estimates from the previous section are satisfied for the Kirchhoff transformed variables, too. \\
% Note in particular that $\psi_\rho = 0$ if and only if $u_\rho = 0$. \\ 
The advantage of the Kirchhoff transformed formulation is the linear flux term. However, this comes at the cost of a non-linear transmission condition for the Kirchhoff transformed pressure variable. \\
We rescale the fracture in horizontal direction by defining $z = x/\varepsilon$, and introduce the following notations:
\begin{equation}
\begin{aligned}
\tilde{u}_f(t,z,y) &= u_f(t,z \varepsilon, y), \\
\tilde{u}_{f,I}(z,y) &= u_{f,I}(z \varepsilon, y) := \mathcal{K}_f(\psi_{f,I})(z \varepsilon, y), \\
\tilde{f}_f(t,z,y) &= f_f(t,z \varepsilon, y). \\
\end{aligned}
\end{equation}
To unify the notation, we set $z = x$ in the matrix blocks. For the notation of the domains, we use the following conventions: since the solid matrix subdomains are merely translated when $\varepsilon$ varies, we omit the $\varepsilon$ in the notation and write $\Omega_{m_j}$ without a superscript. The fracture domain will be denoted as $\Omega_f^\varepsilon := (-\frac{\varepsilon}{2},\frac{\varepsilon}{2}) \times (0,1)$, and the shorthand notation $\Omega_f := \Omega_f^1$ will be used. The solution in each domain $u_\rho^\varepsilon$ will be endowed with a superscript $\varepsilon$ to emphasise the $\varepsilon$-dependence. 
% For positive values of the parameter $\lambda$, we require an assumption on the decline of the source term in addition to the assumptions at the beginning of the previous section: \\[0.3cm]
%\begin{minipage}{0.1\textwidth}
%$(A_\lambda)$
%\end{minipage}
%\hspace{-0.2\textwidth}
%\begin{minipage}{0.89\textwidth}
%\[ \left\lbrace
%\begin{aligned}
%\|f_f^\varepsilon\|^2_{\Omega_f^{\varepsilon,T}} &\leq C, & \text{if } \lambda \leq 0, \\
%\|f_f^\varepsilon\|^2_{\Omega_f^{\varepsilon,T}} &\leq C \varepsilon^{\lambda}, & \text{if } \lambda > 0.
%\end{aligned}
%\right.
%\]
%\end{minipage} \\[0.3cm]
%Moreover, we introduce the one-dimensional fracture domain for the effective models $\Gamma = \lbrace 0 \rbrace \times (0,1)$ and the function space for the fracture solution in the effective model
%
%\begin{equation}
%%\bar{\mathcal{V}}_f \subset W^{1,2}(\Gamma),
%\bar{\mathcal{V}}_f := \lbrace u \in L^2(\Gamma): \partial_y u \in L^2(\Gamma), u = 0 \text{ on } \partial \Gamma \rbrace = W^{1,2}_0(\Gamma).
%\end{equation}

Moreover, we introduce the one-dimensional fracture domain for the effective models $\Gamma = \lbrace 0 \rbrace \times (0,1)$ and the function space for the fracture solution in the effective model
\begin{equation}
\bar{\mathcal{V}}_f := \lbrace u \in L^2(\Gamma): \partial_y u \in L^2(\Gamma), u = 0 \text{ on } \partial \Gamma \rbrace = W^{1,2}_0(\Gamma).
\end{equation}
Figure \ref{fig:Figure_limit} illustrates the geometry of the problem in rescaled variables and the geometry of the effective models, in which the fracture has become one-dimensional. 
%%%%%%%%%%%%% Figure %%%%%%%%%%%%%%%
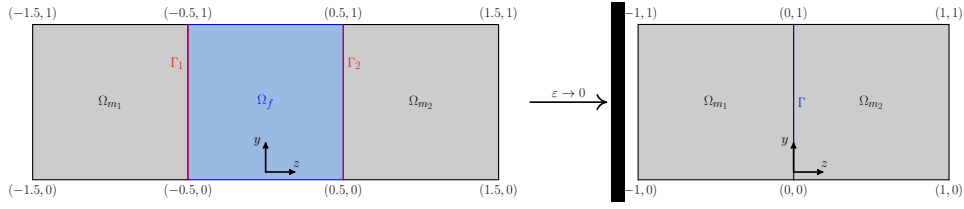
\begin{figure}[!h]
\begin{center}
\begin{minipage}{0.49\textwidth}
\resizebox{1.27\textwidth}{!}{%
\definecolor{myblue}{rgb}{0.6,0.73,0.89}
\begin{tikzpicture}[
pile/.style={ultra thick, ->, >=stealth', shorten <=-1pt, shorten
    >=2pt}]
{\Huge
\draw [line width = 2pt, fill=black!20] (-15,0) rectangle (-5,10);
\draw [line width = 2pt, fill=black!20] (5,0) rectangle (15,10);
\draw [blue, line width = 2pt, fill=myblue] (-5,0) rectangle (5,10);
\node [below](a) at (-15,0) {$(-1.5,0)$};
\node [below](b1) at (-5,0) {$(-0.5,0)$};
\node [below](b) at (-5,0) {};
\node [below](c1) at (5,0) {$(0.5,0)$};
\node [below](c) at (5,0) {};
\node [below](d) at (15,0) {$(1.5,0)$};
\node [above](e) at (-15,10) {$(-1.5,1)$};
\node [above](f1) at (-5,10) {$(-0.5,1)$};
\node [above](f) at (-5,10) {};
\node [above](g1) at (5,10) {$(0.5,1)$};
\node [above](g) at (5,10) {};
\node [above](h) at (15,10) {$(1.5,1)$};
\node(i) at (-10,5) {$\Omega_{m_1}$};
\node(j) at (10,5) {$\Omega_{m_2}$};
\node(k) [blue] at (0,5) {$\Omega_f$};
\draw [red, line width = 2pt] (b) -- (f);
\draw [red, line width = 2pt] (c) -- (g);
\node(l) [red, left] at (-5,7.5) {$\Gamma_1$};
\node(m) [red, right] at (5,7.5) {$\Gamma_2$};
\draw[pile, line width = 3pt] (0.0,0.5) -- (0.0,2.5) node [pos = 1, left] {$y$};
\draw[pile, line width = 3pt] (0.0,0.5) -- (2.0,0.5) node [pos = 1, above] {$z$};
\draw [-{>[scale=10, length=2, width=3]}, line width = 3pt] (17,5) -- (22,5) node [above, pos = 0.5] {$\varepsilon \to 0$};
}
\end{tikzpicture}
}
\end{minipage}
\begin{minipage}{0.49\textwidth}
\hspace{1.5cm}
\resizebox{0.75\textwidth}{!}{%
\definecolor{myblue}{rgb}{0.6,0.73,0.89}
\definecolor{mygray}{rgb}{0.15,0.15,0.15}
\begin{tikzpicture}[
pile/.style={ultra thick, ->, >=stealth', shorten <=-1pt, shorten
    >=2pt}]
{\Huge
\draw [line width = 2pt, fill=black!20] (-10,0) rectangle (0,10);
\draw [line width = 2pt, fill=black!20] (0,0) rectangle (10,10);
\node [below](a) at (-10,0) {$(-1,0)$};
\node [below](b) at (0,0) {$(0,0)$};
\node [below](c) at (1,0) {};
\node [below](d) at (10,0) {$(1,0)$};
\node [above](e) at (-10,10) {$(-1,1)$};
\node [above](f) at (0,10) {$(0,1)$};
\node [above](h) at (10,10) {$(1,1)$};
\node(i) at (-5,5) {$\Omega_{m_1}$};
\node(j) at (5,5) {$\Omega_{m_2}$};
\node[right](k) [blue] at (0,5) {$\Gamma$};
\draw [blue, line width = 2pt] (b) -- (f);
\draw[pile, line width = 3pt] (0.0,0.5) -- (0.0,2.5) node [pos = 1, left] {$y$};
\draw[pile, line width = 3pt] (0.0,0.5) -- (2.0,0.5) node [pos = 1, above] {$z$};
}
\end{tikzpicture}
}
\end{minipage}
\end{center}
\caption[Geometry in rescaled variables and upscaled geometry]{Geometry with two-dimensional fracture in rescaled variables (left) and upscaled geometry with one-dimensional fracture (right)}
\label{fig:Figure_limit}
\end{figure} \\
%It will turn out that the pressure head in the effective model is continuous at the interface. Therefore, we expect the solution in the fracture to approach a constant value in $z$-direction as the fracture becomes narrower. 
For each $\varepsilon > 0$, we define the $z$-averaged quantities
\begin{equation}
\begin{aligned}
\bar{u}_f^\varepsilon(t,y) &:=  \int_{-\frac{1}{2}}^{\frac{1}{2}} \tilde{u}_f^\varepsilon(t,z,y) \ dz,  \qquad &
\bar{f}_f(t,y) &:=  \int_{-\frac{1}{2}}^{\frac{1}{2}} \tilde{f}_f(t,z,y) \ dz,  \\
\bar{b}_f(\tilde{u}_f^\varepsilon)(t,y) &:=  \int_{-\frac{1}{2}}^{\frac{1}{2}} b_f(\tilde{u}_f^\varepsilon)(t,z,y) \ dz,  \qquad &
\bar{b}_{f}(\tilde{u}_{f,I}^\varepsilon)(y) &:=  \int_{-\frac{1}{2}}^{\frac{1}{2}} b_f(\tilde{u}_{f,I}^\varepsilon)(z,y) \ dz,
\end{aligned}
\end{equation}
and
\begin{equation}
\begin{aligned}
\breve{u}_f^\varepsilon(t) &:=  \int_0^1 \bar{u}_f^\varepsilon(t,y) \ dy, \qquad &
\breve{f}_f(t) &= \int_0^1 \bar{f}_f(t,y) \ dy, \\
\breve{b}_f(\tilde{u}_f^\varepsilon)(t) &:= \int_0^1 \bar{b}_f(\tilde{u}_f^\varepsilon)(t,y) \ dy, \qquad &
\breve{b}_{f}(\tilde{u}_{f,I}^\varepsilon) &:= \int_0^1 \bar{b}_f(\tilde{u}_{f,I})(y) \ dy.
\end{aligned}
\end{equation}
We state the weak formulation in terms of the Kirchhoff transformed and rescaled variables. In terms of the rescaled variables, the geometry is $\varepsilon$-independent and the $x$-argument of the functions associated with the fracture becomes $\varepsilon$-dependent instead:
\begin{definition}[Weak solution in rescaled variables]
\label{def:weak_sol_upscaling_variant}
A triple $(u_{m_1}^\varepsilon, u_{m_2}^\varepsilon, \tilde{u}_f^\varepsilon)$ $\in L^2(0,T;\mathcal{V}_{m_1}) \times L^2(0,T;\mathcal{V}_{m_2}) \times L^2(0,T;\mathcal{V}_f)$ is called a weak solution to the Kirchhoff transformed formulation of Problem $\mathcal{P}_\varepsilon$ if
\begin{equation}
\begin{aligned}
\mathcal{K}_m^{-1} (u_{m_1}^\varepsilon) = \mathcal{K}_f^{-1} (\tilde{u}_f^\varepsilon) \ \text{ on } \Gamma_1  \text{ and } \quad \mathcal{K}_m^{-1} (u_{m_2}^\varepsilon) = \mathcal{K}_f^{-1} (\tilde{u}_f^\varepsilon) \ \text{ on } \Gamma_2 \text{ for a.e. } t \in [0,T],
\end{aligned}
\end{equation}
in the sense of traces, and	
\begin{equation}
\begin{aligned}
&\hspace{.1cm} - \sum_{j=1}^2 \scalOmjT{b_m(u_{m_j}^\varepsilon),\partial_t \phi_{m_j}} - \varepsilon^{\kappa+1} \scalOfT{b_f(\tilde{u}_f^\varepsilon),\partial_t \phi_f} \\
&\hspace{.1cm} + \sum_{j=1}^2 \scalOmjT{\nabla u_{m_j}^\varepsilon, \nabla \phi_{m_j}} + \varepsilon^{\lambda-1} \scalOfT{\partial_z \tilde{u}_f^\varepsilon, \partial_z \phi_f} + \varepsilon^{\lambda+1} \scalOfT{\partial_y \tilde{u}_f^\varepsilon, \partial_y \phi_f} \\
&= \sum_{j=1}^2 \scalOmjT{f_{m_j}, \phi_{m_j}} + \varepsilon \scalOfT{\tilde{f}_f, \phi_f} \\
&\hspace{.1cm} + \sum_{j=1}^2 \left(b_m(u_{m_j,I}), \phi_{m_j}(0)\right)_{\Omega_{m_j}} + \varepsilon^{\kappa+1} \left(b_f(\tilde{u}_{f,I}), \phi_{f}(0)\right)_{\Omega_{f}},
\end{aligned}
\label{eq:PDE_upscaling_variant}
\end{equation}
for all $(\phi_{m_1}, \phi_{m_2}, \phi_f) \in W^{1,2}(0,T;\mathcal{V}_{m_1}) \times W^{1,2}(0,T;\mathcal{V}_{m_2}) \times W^{1,2}(0,T;\mathcal{V}_f)$ satisfying 
\begin{equation}
\phi_{m_1} = \phi_f \ \text{ on } \Gamma_1 \quad \text{and} \quad \phi_{m_2} = \phi_f \ \text{ on } \Gamma_2 \quad \text{for a.e. } t \in [0,T],
\end{equation}
and
\begin{equation}
\phi_\rho(T) = 0 \qquad \text{for } \rho \in \lbrace m_1, m_2, f \rbrace.
\end{equation}
\end{definition}
The formulation in Definition \ref{def:weak_sol_upscaling_variant} is the starting point for deriving limit models for all choices of $\kappa$ and $\lambda$. 
% Henceforth, we present the case $\kappa = \lambda = -1$. 
\subsection{Uniform estimates with respect to $\varepsilon$}
In order to prove the convergence of Problem $\mathcal{P}_\varepsilon$ towards an effective model, we establish estimates for the solution and its derivatives independent of $\varepsilon$, similar to the uniform estimates with respect to $\Delta t$ in Section \ref{sec:existence}. \\
Testing with $z$-independent functions $\phi_f(t,z,y) = \phi_f(t,y)$ in the fracture (and hence $\phi_{m_1}$ and $\phi_{m_2}$ fulfilling $\phi_{m_1}(t,z,y)|_{\Gamma_1} = \phi_f(t,y) = \phi_{m_2}(t,z,y)|_{\Gamma_2}$ for a.e. $t \in [0,T]$) in Definition \ref{def:weak_sol_upscaling_variant} gives
% and setting $\kappa = \lambda = -1$ gives
\begin{equation}
\begin{aligned}
&\hspace{.1cm} -\sum_{j=1}^2 \scalOmjT{b_m(u_{m_j}^\varepsilon),\partial_t \phi_{m_j}} - \varepsilon^{\kappa+1} (\bar{b}_f(\tilde{u}_f^\varepsilon),\partial_t \phi_f)_{\Gamma^T} + \sum_{j=1}^2 \scalOmjT{\nabla u_{m_j}^\varepsilon, \nabla \phi_{m_j}}  \\
&\hspace{0cm}+ \varepsilon^{\lambda+1} (\partial_y \bar{u}_f^\varepsilon, \partial_y \phi_f)_{\Gamma^T} 
= \sum_{j=1}^2 \scalOmjT{f_{m_j}, \phi_{m_j}} + \varepsilon (\bar{f}_f, \phi_f)_{\Gamma^T}\\ 
&\hspace{0cm} + \sum_{j=1}^2 \left(b_m(u_{m_j,I}), \phi_{m_j}(0)\right)_{\Omega_{m_j}} 
+ \varepsilon^{\kappa+1} \left(\bar{b}_f(\tilde{u}_{f,I}), \phi_{f}(0)\right)_{\Gamma},
\end{aligned}
\label{eq:integrated_variant}
\end{equation}
for all $(\phi_{m_1}, \phi_{m_2}, \phi_f) \in W^{1,2}(0,T;\mathcal{V}_{m_1}) \times W^{1,2}(0,T;\mathcal{V}_{m_2}) \times W^{1,2}(0,T;\bar{\mathcal{V}}_f)$ satisfying $\phi_{m_1}(t,z,y)|_{\Gamma_1} = \phi_f(t,y) = \phi_{m_2}(t,z,y)|_{\Gamma_2}$ for a.e. $t \in [0,T]$ and $\phi_\rho(T) = 0$ for $\rho \in \lbrace m_1, m_2, f \rbrace$. \\
Based on the {\it a priori} estimate in Lemma \ref{lemma:a_priori_estimate_discrete}, one shows that the solution and its derivatives can be bounded uniformly in $\varepsilon$ for $\kappa \geq -1$, $\lambda \leq -1$, and $\varepsilon > 0$ sufficiently small; in addition, we get uniform essential bounds for the solution since the constant $M_\psi$ in Lemma \ref{lemma:l_infty_estimate_discrete} is $\varepsilon$-independent:
\begin{lemma}
\label{lemma:upscaling_estimate}
There exists a $C > 0$ independent of $\varepsilon$ such that
\begin{equation}
\begin{aligned}
\sum_{j=1}^2 \|u_{m_j}^\varepsilon\|^2_{L^2(0,T;\mathcal{V}_{m_j})} + \varepsilon^{\kappa+1} \|\tilde{u}_{f}^\varepsilon\|^2_{\Omega_f^T} + \varepsilon^{\lambda+1} \|\partial_y \tilde{u}_f^\varepsilon\|^2_{\Omega_f^T} + \varepsilon^{\lambda-1} \|\partial_z \tilde{u}_f^\varepsilon\|^2_{\Omega_f^T} &\leq C, \\
%\sum_{j=1}^2 \|b_m(u_{m_j}^\varepsilon)\|^2_{W^{1,2}(0,T;W^{-1,2}(\Omega_{m_j}))} + \|b_f(\tilde{u}_f^\varepsilon)\|^2_{W^{1,2}(0,T;W^{-1,2}(\Omega_{f}))} &\leq C, \\ 
\sum_{j=1}^2 \|u_{m_j}^\varepsilon\|_{L^\infty(\Omega_{m_j}^T)} + \|\tilde{u}_f^\varepsilon\|_{L^\infty(\Omega_f^T)} &\leq C.
\end{aligned}
\end{equation}
\end{lemma}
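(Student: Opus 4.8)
The plan is to transfer the discrete a priori bounds of Lemma~\ref{lemma:a_priori_estimate_discrete} and the essential-sup bound of Lemma~\ref{lemma:l_infty_estimate_discrete} to the continuous weak solution $u_\rho^\varepsilon$, and then to rewrite every fracture contribution in the rescaled variable $z = x/\varepsilon$, keeping careful track of the resulting powers of $\varepsilon$. Since the Kirchhoff transform $\mathcal{K}_\rho$ is bi-Lipschitz and $K_\rho$ is bounded from above and below by $m_K,M_K$ (Assumption $(A_K)$), one may pass freely between $\psi_\rho$- and $u_\rho$-norms and between $\nabla\psi_\rho$- and $\nabla u_\rho$-norms at the cost of $\varepsilon$-independent constants. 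First I would fix $\varepsilon$ and recall that Lemma~\ref{lemma:a_priori_estimate_discrete} holds for the time-discrete iterates uniformly in $\Delta t$; passing $\Delta t \to 0$ along the subsequence of \eqref{eq:convergence} and invoking weak lower semicontinuity of the $L^2$-norms, the same estimate is inherited by $u_\rho^\varepsilon$, still carrying the explicit powers of $\varepsilon$ attached to the fracture terms in that lemma.

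Next I would perform the anisotropic change of variables on the fracture integrals. Because $dx = \varepsilon\,dz$ and $\partial_x = \varepsilon^{-1}\partial_z$ while $\partial_y$ is untouched, one has, for a.e.\ $t$,
\[
\|u_f^\varepsilon\|_{\Omega_f^\varepsilon}^2 = \varepsilon\,\|\tilde u_f^\varepsilon\|_{\Omega_f}^2, \qquad \|\partial_x u_f^\varepsilon\|_{\Omega_f^\varepsilon}^2 = \varepsilon^{-1}\|\partial_z\tilde u_f^\varepsilon\|_{\Omega_f}^2, \qquad \|\partial_y u_f^\varepsilon\|_{\Omega_f^\varepsilon}^2 = \varepsilon\,\|\partial_y\tilde u_f^\varepsilon\|_{\Omega_f}^2.
\]
Substituting these (integrated in time) into the fracture terms turns the storage term $\varepsilon^\kappa\int_{\Omega_f^\varepsilon}\mathcal{W}_f$ into a multiple of $\varepsilon^{\kappa+1}\|\tilde u_f^\varepsilon\|_{\Omega_f^T}^2$ (using $\mathcal{W}_f \geq m_S\psi^2/2$ from Lemma~\ref{lemma:W_properties} and the Kirchhoff equivalence), and the diffusion term $\varepsilon^\lambda\|\nabla\psi_f^\varepsilon\|^2$ into the sum $\varepsilon^{\lambda-1}\|\partial_z\tilde u_f^\varepsilon\|_{\Omega_f^T}^2 + \varepsilon^{\lambda+1}\|\partial_y\tilde u_f^\varepsilon\|_{\Omega_f^T}^2$. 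The matrix terms are unaffected by the rescaling and deliver the full $L^2(0,T;\mathcal{V}_{m_j})$-norm: the gradient part directly and the $L^2$ part via Remark~\ref{rmrk:non-degenerate}. Hence the left-hand side of the claimed estimate is controlled precisely by the left-hand side of the transferred a priori estimate.

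It then remains to bound the right-hand side independently of $\varepsilon$. The matrix initial-energy and source contributions carry no power of $\varepsilon$ and are controlled by Assumptions $(A_\rho)$ and $(A_f)$ together with Lemma~\ref{lemma:W_properties}. For the fracture, the same rescaling produces a factor $\varepsilon^{\kappa+1}$ in front of $\int_{\Omega_f}\mathcal{W}_f(\tilde u_{f,I})$ and a factor $\varepsilon^{-\lambda}\cdot\varepsilon = \varepsilon^{1-\lambda}$ in front of $\|\tilde f_f\|_{\Omega_f^T}^2$; both prefactors remain bounded as $\varepsilon\to 0$ under the standing hypotheses $\kappa \geq -1$ and $\lambda \leq -1$ (for the source term alone $\lambda \leq 1$ would already suffice), using that $|\psi_{f,I}|\leq M_\rho$ and $|\tilde f_f| \leq M_f$ are preserved by the rescaling. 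This yields the first inequality. Finally, the $L^\infty$ bound follows from Lemma~\ref{lemma:l_infty_estimate_discrete}, whose constant $M_\psi = \max\{M_\rho, M_f/m_S\}$ is $\varepsilon$-independent: the bound $\|\psi_\rho^\varepsilon\|_{L^\infty(\Omega_\rho^T)} \leq M_\psi(T+1)$ survives $\Delta t \to 0$, transfers to $u_\rho = \mathcal{K}_\rho(\psi_\rho)$ via $|\mathcal{K}_\rho(\psi)| \leq M_K|\psi|$, and is left unchanged by the rescaling.

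The computations themselves are routine; the crux is the bookkeeping of the $\varepsilon$-powers through the anisotropic change of variables, since the Jacobian factor $\varepsilon$ and the chain-rule factor $\varepsilon^{-1}$ combine differently in the storage, the $z$-diffusion, and the $y$-diffusion terms. It is exactly this combination that produces the mismatched exponents $\varepsilon^{\kappa+1}$, $\varepsilon^{\lambda-1}$, $\varepsilon^{\lambda+1}$ on the left and pins down $\kappa \geq -1$, $\lambda \leq -1$ as the range in which the right-hand side stays uniformly bounded. I would therefore treat the three fracture contributions separately and verify each exponent explicitly, rather than manipulating $\nabla$ as a single object.
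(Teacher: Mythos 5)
Your proposal is correct and follows essentially the same route as the paper, which proves this lemma exactly by transferring the $\Delta t$-uniform estimates of Lemmas \ref{lemma:a_priori_estimate_discrete} and \ref{lemma:l_infty_estimate_discrete} (the latter with its $\varepsilon$-independent constant $M_\psi$) to the weak solution, passing through the Kirchhoff transform, and tracking the powers of $\varepsilon$ produced by the anisotropic rescaling $z = x/\varepsilon$; your explicit bookkeeping of the Jacobian factor $\varepsilon$ against the chain-rule factor $\varepsilon^{-1}$ is precisely the computation the paper leaves implicit. Your parenthetical observation that the right-hand side actually stays bounded for all $\lambda \leq 1$ (not just $\lambda \leq -1$) is also consistent with how the paper later uses these bounds in the regime $\lambda \in (-1,1)$.
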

These estimates are directly carried over to the averages of fracture solution by virtue of Jensen's inequality:
\begin{lemma}
\label{lemma:upscaling_estimate_2}
There exists a $C > 0$ independent of $\varepsilon$ such that
\begin{equation}
\varepsilon^{\kappa+1} \|\bar{u}_{f}^\varepsilon\|^2_{\Gamma^T} + \varepsilon^{\lambda+1} \|\partial_y \bar{u}_f^\varepsilon\|^2_{\Gamma^T} 
%+ {\color{red} \|\bar{b}_f(\tilde{u}_f^\varepsilon)\|_{W^{1,2}(0,T;W^{-1,2}(\Omega_f))} }
+ \|\bar{u}_f^\varepsilon\|_{L^\infty(\Gamma^T)} + \varepsilon^{\kappa+1} \|\breve{u}_f^\varepsilon\|^2_{(0,T)} + \|\breve{u}_f^\varepsilon\|_{L^\infty(0,T)} \leq C.
\end{equation}
\end{lemma}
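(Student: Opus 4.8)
The plan is to push each bound from Lemma~\ref{lemma:upscaling_estimate} through the two averaging operators $\tilde{u}_f^\varepsilon \mapsto \bar{u}_f^\varepsilon$ (integration in $z$ over $(-\tfrac12,\tfrac12)$) and $\bar{u}_f^\varepsilon \mapsto \breve{u}_f^\varepsilon$ (integration in $y$ over $(0,1)$), exploiting that both averaging intervals have unit length. Because the length is exactly one, each averaging integral coincides with a mean over a probability space, so Jensen's inequality applied to the convex map $s \mapsto s^2$ gives, for almost every $(t,y)$,
\[
|\bar{u}_f^\varepsilon(t,y)|^2 = \left| \int_{-1/2}^{1/2} \tilde{u}_f^\varepsilon(t,z,y)\,dz\right|^2 \le \int_{-1/2}^{1/2} |\tilde{u}_f^\varepsilon(t,z,y)|^2\,dz.
\]
Integrating in $t$ and $y$ then yields $\|\bar{u}_f^\varepsilon\|_{\Gamma^T}^2 \le \|\tilde{u}_f^\varepsilon\|_{\Omega_f^T}^2$, so multiplying by $\varepsilon^{\kappa+1}$ and invoking the first bound of Lemma~\ref{lemma:upscaling_estimate} controls the first term.

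For the gradient term I would first note that $\partial_y$ commutes with the $z$-integration, since $z$ and $y$ are independent variables and $\tilde{u}_f^\varepsilon \in L^2(0,T;\mathcal{V}_f)$; hence $\partial_y \bar{u}_f^\varepsilon = \int_{-1/2}^{1/2} \partial_y \tilde{u}_f^\varepsilon\,dz$. Applying the same Jensen estimate to $\partial_y \tilde{u}_f^\varepsilon$ gives $\|\partial_y \bar{u}_f^\varepsilon\|_{\Gamma^T}^2 \le \|\partial_y \tilde{u}_f^\varepsilon\|_{\Omega_f^T}^2$, and multiplication by $\varepsilon^{\lambda+1}$ together with the third summand in Lemma~\ref{lemma:upscaling_estimate} closes this bound. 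The two $L^\infty$ bounds are even simpler: since $|\bar{u}_f^\varepsilon(t,y)| \le \int_{-1/2}^{1/2} |\tilde{u}_f^\varepsilon|\,dz \le \|\tilde{u}_f^\varepsilon\|_{L^\infty(\Omega_f^T)}$ (again using unit length), one directly transfers the essential bound on $\tilde{u}_f^\varepsilon$ to $\bar{u}_f^\varepsilon$.

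Finally, for the terms involving $\breve{u}_f^\varepsilon$ I would iterate the argument once more over $y \in (0,1)$: Jensen gives $\|\breve{u}_f^\varepsilon\|_{(0,T)}^2 \le \|\bar{u}_f^\varepsilon\|_{\Gamma^T}^2$ and $\|\breve{u}_f^\varepsilon\|_{L^\infty(0,T)} \le \|\bar{u}_f^\varepsilon\|_{L^\infty(\Gamma^T)}$, so the $\varepsilon^{\kappa+1}$-weighted bound and the $L^\infty$ bound already obtained for $\bar{u}_f^\varepsilon$ carry over verbatim. Collecting the five estimates and taking $C$ to be the maximum of the constants produced completes the proof. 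I do not anticipate a genuine obstacle here, as averaging over unit-length intervals introduces no $\varepsilon$-dependent constant; the only point requiring a word of justification is the interchange of $\partial_y$ and the $z$-average in the gradient term, which is licensed by Fubini together with the Sobolev regularity of $\tilde{u}_f^\varepsilon$.
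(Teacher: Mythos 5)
Your proof is correct and follows exactly the route the paper takes: the paper justifies this lemma in one line, saying the estimates of Lemma~\ref{lemma:upscaling_estimate} ``are directly carried over to the averages of fracture solution by virtue of Jensen's inequality,'' which is precisely the argument you have spelled out (including the unit-length averaging intervals and the harmless interchange of $\partial_y$ with the $z$-average). No discrepancy to report.
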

As remarked before (see Remark \ref{remark:time_derivative}), it remains to show that the time derivative estimate in the fracture $\partial_t b(u_f^\varepsilon) \in L^2(0,T; W^{-1,2}(\Omega_f))$ as obtained in Lemma \ref{lemma:partial_t_estimate} is independent of $\varepsilon$. We have the following result. 

\begin{lemma}\label{lemma:time_derivative_improved}
Under the assumption that the source term $\varepsilon^{-2 \kappa} \, \|\tilde{f}_f\|_{\Omega_f^T}^2 \leq C$ and the initial data $\varepsilon^{\lambda- \kappa - 2} \, \mathcal W(\tilde{u}^\varepsilon_{f,I})  \leq C$, the functional $\partial_t b_\rho$ is uniformly bounded with respect to $\varepsilon$ in $L^{2}(0,T;W^{-1,2}(\Omega_\rho))$ for $\rho \in \lbrace m_1, m_2, f \rbrace$.
\end{lemma}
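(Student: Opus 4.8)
Here is how I would approach the proof.

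The plan is to separate the matrix blocks from the fracture, because only the latter carry $\varepsilon$-dependent coefficients. For $\rho \in \{m_1, m_2\}$ I would simply redo Lemma \ref{lemma:partial_t_estimate} with the $\varepsilon$-uniform bounds now available. Restricting the test functions to $\phi_{m_j} \in W^{1,2}_0(\Omega_{m_j})$ and setting the other components to zero decouples the $j$-th matrix equation, giving for a.e. $t$ the bound $\|\partial_t b_m(u_{m_j}^\varepsilon)\|_{W^{-1,2}(\Omega_{m_j})} \le \|\nabla u_{m_j}^\varepsilon\|_{\Omega_{m_j}} + \|f_{m_j}\|_{\Omega_{m_j}}$. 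Squaring, integrating in time, and invoking the $\varepsilon$-uniform estimate of Lemma \ref{lemma:upscaling_estimate} together with $(A_f)$ closes this case and makes Remark \ref{remark:time_derivative} precise.

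For the fracture I would start from the isolated identity \eqref{eq:weak_solution_improved}, rewritten in the Kirchhoff and rescaled variables: for $\phi_f \in L^2(0,T;W^{1,2}_0(\Omega_f))$,
\[\varepsilon^{\kappa+1}\scalOfT{\partial_t b_f(\tilde u_f^\varepsilon),\partial \phi_f}^{\!\!} + \varepsilon^{\lambda-1}\scalOfT{\partial_z \tilde u_f^\varepsilon,\partial_z \phi_f} + \varepsilon^{\lambda+1}\scalOfT{\partial_y \tilde u_f^\varepsilon,\partial_y \phi_f} = \varepsilon\scalOfT{\tilde f_f,\phi_f}.\]
Dividing by $\varepsilon^{\kappa+1}$ and passing to the dual norm yields
\[\|\partial_t b_f(\tilde u_f^\varepsilon)\|_{W^{-1,2}(\Omega_f)} \le \varepsilon^{\lambda-\kappa-2}\|\partial_z \tilde u_f^\varepsilon\|_{\Omega_f} + \varepsilon^{\lambda-\kappa}\|\partial_y \tilde u_f^\varepsilon\|_{\Omega_f} + \varepsilon^{-\kappa}\|\tilde f_f\|_{\Omega_f},\]
so that, after squaring and integrating, the whole problem reduces to controlling $\varepsilon^{2(\lambda-\kappa-2)}\|\partial_z \tilde u_f^\varepsilon\|_{\Omega_f^T}^2$, $\varepsilon^{2(\lambda-\kappa)}\|\partial_y \tilde u_f^\varepsilon\|_{\Omega_f^T}^2$ and $\varepsilon^{-2\kappa}\|\tilde f_f\|_{\Omega_f^T}^2$. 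The last is exactly the source hypothesis and is therefore immediate.

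The two gradient weights are one power of $\varepsilon$ stronger than what the energy estimate of Lemma \ref{lemma:upscaling_estimate} supplies: inserting $\varepsilon^{\lambda-1}\|\partial_z \tilde u_f^\varepsilon\|^2 \le C$ produces, for the transverse term, the power $\varepsilon^{\lambda-2\kappa-3}$, whose exponent is negative throughout $\kappa \ge -1$, $\lambda \le -1$, and an analogous shortfall affects the longitudinal term. To recover the missing power I would derive a sharper a priori estimate by testing the fracture equation against the deviation $\tilde u_f^\varepsilon - \bar u_f^\varepsilon$ from its transverse average. Since $\partial_z(\tilde u_f^\varepsilon - \bar u_f^\varepsilon) = \partial_z \tilde u_f^\varepsilon$, the transverse Dirichlet form is reproduced, the boundary term on $\Gamma_1 \cup \Gamma_2$ equals — by the flux transmission condition — the matrix normal flux, and the transverse Poincaré inequality $\|\tilde u_f^\varepsilon - \bar u_f^\varepsilon\|_{\Omega_f} \le C\|\partial_z \tilde u_f^\varepsilon\|_{\Omega_f}$, combined with the already $\varepsilon$-uniform matrix bounds, turns this into an estimate that gains one power of $\varepsilon$. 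The initial data enter this refined estimate precisely through $\varepsilon^{\lambda-\kappa-2}\mathcal{W}(\tilde u_{f,I}^\varepsilon)$, which is bounded by the initial-data hypothesis.

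The hard part is exactly this refined transverse-gradient estimate, compounded by the non-linearity of the storage term $\partial_t b_f$: since $b_f$ is merely Lipschitz it cannot be integrated by parts, so the estimate has to be carried through the energy functional $\mathcal{W}_f$, and it is here that both scalings are indispensable — the source hypothesis to absorb the right-hand side after Young's inequality, the initial-data hypothesis to dominate the initial energy at the rate dictated by the transverse diffusion weight. I expect the rigorous meaning of the interface pairing on $\Gamma_1, \Gamma_2$, which must be read in $H^{-1/2}(\Gamma) \times H^{1/2}(\Gamma)$ using the matrix equation and the bound on $\partial_t b_m$ just obtained, to be the most delicate point; it is the same non-linearity of the time-derivative term that forces the companion case $\kappa = -1$, $\lambda \in (-1,1)$ to be left open in the main result.
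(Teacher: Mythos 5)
Your treatment of the matrix blocks coincides with the paper's (the proof of Lemma \ref{lemma:partial_t_estimate} carries over verbatim, with Lemma \ref{lemma:upscaling_estimate} supplying the $\varepsilon$-uniform bounds), and you correctly identify that in the fracture the naive step --- dividing the isolated fracture identity by $\varepsilon^{\kappa+1}$ and estimating the dual norm by the weighted gradients --- falls short of what the energy estimate supplies. The gap lies in your proposed repair. First, the quantitative claim is off: the shortfall you yourself computed is $\varepsilon^{\lambda-2\kappa-3}$, i.e. \emph{two} powers of $\varepsilon$ in the squared transverse-gradient norm at $\kappa=\lambda=-1$, and it worsens as $\kappa$ grows (e.g. $\varepsilon^{-4}$ for $\kappa=0$, $\lambda=-1$); a fixed gain of ``one power of $\varepsilon$'' therefore cannot close the argument over the whole parameter range, and nothing in your sketch produces a $\kappa$-dependent gain. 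Second, and more fundamentally, testing with the deviation $\tilde u_f^\varepsilon - \bar u_f^\varepsilon$ breaks down at the storage term: it produces $\langle\partial_t b_f(\tilde u_f^\varepsilon),\tilde u_f^\varepsilon\rangle - \langle\partial_t b_f(\tilde u_f^\varepsilon),\bar u_f^\varepsilon\rangle$, and while the first pairing integrates in time to the energy $\int_{\Omega_f}\mathcal{W}_f$, the second does not: it is not the time derivative of any functional (one would need control of $\partial_t\bar u_f^\varepsilon$, which is unavailable), and bounding it instead by $\|\partial_t b_f\|_{W^{-1,2}(\Omega_f)}\|\bar u_f^\varepsilon\|_{W^{1,2}}$ is circular, since that dual norm is exactly what the lemma asserts. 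The interface pairing you flag is a further unresolved obstruction: the trace of $\tilde u_f^\varepsilon-\bar u_f^\varepsilon$ on $\Gamma_j$ is controlled in $L^2$ (Lemma \ref{lemma:upscaling_vanDuijn_new}) but not in $H^{1/2}(\Gamma_j)$, which the $H^{-1/2}$ matrix flux would require.

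The idea you are missing is the paper's duality argument, which never decouples the elliptic terms from the time term and thus needs no refined gradient estimate at all. The paper introduces $A^\varepsilon=\mathrm{diag}(\varepsilon^{\lambda-1},\varepsilon^{\lambda+1})$, lets $G$ solve $-\nabla\cdot(A^\varepsilon\nabla G)=\partial_t b_f$ with $G=0$ on $\partial\Omega_f$, and works with the weighted dual norm $\|\partial_t b_f\|_{-1,\varepsilon}=\|\sqrt{A^\varepsilon}\nabla G\|_{\Omega_f}$. Choosing $\phi_f=G$ in the isolated fracture identity converts the weighted Dirichlet form acting on $\tilde u_f^\varepsilon$ into exactly $\langle\partial_t b_f,\tilde u_f^\varepsilon\rangle$ (by the variational formulation of the dual problem), which integrates in time to $\int_{\Omega_f}\mathcal{W}_f(\tilde u_f^\varepsilon(t))-\int_{\Omega_f}\mathcal{W}_f(\tilde u_{f,I}^\varepsilon)$ and is then discarded by positivity; the source term is absorbed by Young's inequality with the choice $\alpha=-(\kappa+\lambda)$ into $\frac12\varepsilon^{\kappa+1}\int_0^t\|\partial_\tau b_f\|^2_{-1,\varepsilon}\,d\tau$. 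Only at the very end is the $\varepsilon$-independent norm recovered via $\|\cdot\|^2_{W^{-1,2}(\Omega_f)}\le\varepsilon^{\lambda-1}\|\cdot\|^2_{-1,\varepsilon}$, which yields the bound $\varepsilon^{-2\kappa}\|\tilde f_f\|^2_{\Omega_f^T}+2\,\varepsilon^{\lambda-\kappa-2}\int_{\Omega_f}\mathcal{W}(\tilde u_{f,I}^\varepsilon)\,d\vec{x}$ --- precisely the two quantities your hypotheses control. In short: your reduction replaces the statement by a family of gradient estimates that are stronger than anything available (and, for $\kappa>-1$, most likely false), whereas the paper's cancellation makes those estimates unnecessary.
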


\begin{proof}
For $\Omega_\rho, \rho = \{m_1, m_2\}$ the proof in the Lemma \ref{lemma:partial_t_estimate}, carries unchanged to show that $\partial_t b_\rho, \rho = \{m_1, m_2\}$ is bounded uniformly in $\varepsilon$ in the $L^{2}(0,T;W^{-1,2}(\Omega_\rho))$ norm. 
What remains is to consider the fracture case.  We use a duality technique for deducing the $\varepsilon$ independence.  In \eqref{eq:PDE_upscaling_variant},  choose $\phi_f \in L^2(0,T; W^{1,2}(\Omega_f)), \phi_f= 0$ on $\partial \Omega_f$ and $\phi_{m_j} =0, j \in \lbrace 1, 2 \rbrace$. Note that putting the time derivative back to $b_f$ is justified in view of Remark \ref{rem:partial_time_derivative}. This leads to, 
  
\begin{equation}
\begin{aligned}
&\hspace{.1cm}   \varepsilon^{\kappa+1} \scalOfT{\partial_t b_f(\tilde{u}_f^\varepsilon), \phi_f} 
+ \varepsilon^{\lambda-1} \scalOfT{\partial_z \tilde{u}_f^\varepsilon, \partial_z \phi_f} + \varepsilon^{\lambda+1} \scalOfT{\partial_y \tilde{u}_f^\varepsilon, \partial_y \phi_f} \\
&=  \varepsilon \scalOfT{\tilde{f}_f, \phi_f} 
\end{aligned} \label{eq:fracture_time_derivative}
\end{equation}

Our approach is to use duality technique in the above equation to deduce the $L^2(0,T;W^{-1,2}(\Omega_f))$ estimate in the fracture. However, the presence of $\varepsilon$ dependent coefficients require us to make precise this dependency. This is achieved by using the equivalency of $W^{1,2}$ norms. Let $G$ solve the following elliptic problem for a.e. $t$,

\begin{equation}
\begin{aligned}
- \nabla \cdot (A^{\varepsilon} \nabla G) &=  \partial_t b_f,  &\text{ in } &\Omega_f, \\ 
%- \varepsilon^{\lambda-1} \partial_{zz} G - \varepsilon^{\lambda+1} \partial_{yy} G &=  \partial_t b_f,  &\text{ in } &\Omega_f, \\
G &= 0, &\text{ on } & \partial \Omega_f,
\end{aligned}
\end{equation}
where 
\begin{align*}
A^\varepsilon = \left [
\begin{array}{lllll}
\varepsilon^{\lambda-1} & 0 \\
0 & \varepsilon^{\lambda+1}
\end{array}
\right ].
\end{align*}
For notational ease, we have suppressed dependence of $G$ on $\varepsilon$. Define an $W^{1,2}_0 (\Omega_f)$ equivalent norm, 
\begin{align*}
\| v \|_{1,\varepsilon} := \| \sqrt{A^\varepsilon} \nabla v \|_{\Omega_f},
\end{align*}
for all $v \in W^{1,2}_0(\Omega_f)$.
In terms of variational formulation, 
\begin{align*}
(\sqrt{A^\varepsilon} \nabla G, \sqrt{A^\varepsilon} \nabla v )_{\Omega_f} =  \langle \partial_t b_f, v \rangle_{W^{-1,2}(\Omega_f),W^{1,2}_0(\Omega_f)}
\end{align*}
for all $v \in W^{1,2}_0(\Omega_f)$. For the norm of the dual space to $W^{1,2}_0 (\Omega_f)$, a simple exercise gives, 
\[ \|\partial_t b_f \|_{-1,\varepsilon} =  \| \sqrt{A^\varepsilon} \nabla G \|_{\Omega_f}.\]
On the other hand, we have for the $W^{1,2}_0 (\Omega_f)$ equivalence of norms, 
\begin{align}
\label{eq:equivalentnorm} \varepsilon^{\frac{\lambda+1}{2}} \|\nabla G\|_{\Omega_f}  \leq \| \sqrt{A^\varepsilon} \nabla G \|_{\Omega_f} \leq \varepsilon^{\frac{\lambda-1}{2}} \|\nabla G\|_{\Omega_f}. 
\end{align}
Note that the $L^2$ norm for the middle term depends on the $\varepsilon$ whereas the norms on the left and right are independence of $\varepsilon$. 
With the $\varepsilon$-independent norm defined as,
\begin{align*}
\|\partial_t b_f \|_{{W^{-1,2}(\Omega_f)}} = \sup_{\phi \in W^{1,2}(\Omega_f)} \frac{{\langle \partial_t b_f, \phi \rangle_{W^{-1,2}(\Omega_f),W^{1,2}_0(\Omega_f)}}} {{ \| \nabla \phi \|_{\Omega_f}}} 
\end{align*}
 we relate using \eqref{eq:equivalentnorm}
\begin{equation*}
\begin{aligned}
\|\partial_t b_f \|_{W^{-1,2}(\Omega_f)} &= \sup_{\phi \in W^{1,2}(\Omega_f)} \frac{\langle \partial_t b_f, \phi \rangle_{W^{-1,2}(\Omega_f),W^{1,2}_0(\Omega_f)}}{ \|\nabla \phi\|_{\Omega_f}}  \\
&\leq \sup_{\phi \in W^{1,2}(\Omega_f)} \|\partial_t b_f\|_{{-1}, \varepsilon} \frac{ \|\phi\|_{1,\varepsilon}} {\| \nabla \phi \|_{\Omega_f}} \\ 
&\leq  \|\partial_t b_f\|_{{-1}, \varepsilon} \ \varepsilon^{\frac{\lambda-1}{2}}. 
\end{aligned}
\end{equation*}

%\begin{equation}
%\begin{aligned}
%- \varepsilon^{\lambda-1} \partial_{zz} G - \varepsilon^{\lambda+1} \partial_{yy} G &=  \partial_t b_f,  &\text{ in } &\Omega_f, \\
%G &= 0, &\text{ on } & \partial \Omega_f.
%\end{aligned}
%\end{equation}
%
%and note that for any $\phi_f \in W^{1,2}_0(\Omega_f)$, we get
%\begin{equation}
%\begin{aligned}
%\varepsilon^{\lambda-1} \scalOf{\partial_z G, \partial_z \phi_f} + \varepsilon^{\lambda+1} \scalOf{\partial_y G, \partial_y \phi_f}  &=  \scalOf{\partial_t b_f, \phi_f},  &\text{ in } \Omega_f,
%\end{aligned}
%\end{equation}
%and 
%\begin{equation}
%\begin{aligned}
%\varepsilon^{\lambda-1} \|\partial_z G\|^2 + \varepsilon^{\lambda+1} \|\partial_y G\|^2  &=  \|\partial_t b_f\|^2_{H^{-1}(\Omega_f)}
%\end{aligned}
%\end{equation}

Choosing $\phi_f = G$ in \eqref{eq:fracture_time_derivative} and integrating in time from $0$ to $t$, we obtain
\begin{equation}
\begin{aligned}
&\varepsilon^{\kappa+1}  \int_0^t \|\partial_\tau b_f \|^2_{{-1},\varepsilon} \, d\tau + \int_0^t \langle \partial_\tau b_f(\tilde{u}_f^\varepsilon), \tilde{u}_f^\varepsilon \rangle_{W^{-1,2}(\Omega_f),W^{1,2}_0(\Omega_f)} \, d\tau = \varepsilon \int_0^t \scalOf{\tilde{f}_f, G} \, d\tau\\
%& \hspace{0.4cm} \leq \varepsilon^{2 + \alpha} \frac{1}{2} \|\tilde{f}_f\|^2_{L^2(0,t; H_0^{-1}{(\Omega_f)})}  +\frac{1}{2} \varepsilon^{-\alpha} \|\partial_z G\|^2_{L^2(0,t; L^{2}{(\Omega_f)})} \\
& \hspace{0.4cm} \leq \varepsilon^{2 + \alpha} \frac{1}{2} \|\tilde{f}_f\|^2_{\Omega_f^t}  +\frac{1}{2} \varepsilon^{-\alpha-\lambda+1} \left (\varepsilon^{\lambda-1} \|\partial_z G\|^2_{\Omega_f^t} \right), \\
\end{aligned} 
\end{equation}
where we choose $\alpha = -(\kappa+\lambda)$ to get
\begin{equation*}
\begin{aligned}
&\varepsilon^{\kappa+1}  \int_0^t \|\partial_\tau b_f \|^2_{{-1},\varepsilon} \, d\tau + \int_0^t \langle \partial_\tau b_f(\tilde{u}_f^\varepsilon), \tilde{u}_f^\varepsilon \rangle_{W^{-1,2}(\Omega_f),W^{1,2}_0(\Omega_f)} \, d\tau 
 \leq \frac{1}{2} \varepsilon^{2 - \kappa- \lambda}  \|\tilde{f}_f\|^2_{\Omega_f^t}  \\
& \hspace{5.5cm} +\frac{1}{2} \varepsilon^{\kappa+1} \int_0^t \|\partial_t b_f\|^2_{-1,\varepsilon} \, d\tau. 
\end{aligned} 
\end{equation*}
This implies,
\begin{equation}
\begin{aligned}
&\varepsilon^{\kappa+1}  \int_0^t \|\partial_\tau b_f \|^2_{{-1},\varepsilon} \, d\tau + 2 \int_0^t \langle \partial_\tau b_f(\tilde{u}_f^\varepsilon), \tilde{u}_f^\varepsilon \rangle_{W^{-1,2}(\Omega_f),W^{1,2}_0(\Omega_f)} \, d\tau \\
& \leq \varepsilon^{2 - \kappa- \lambda}  \|\tilde{f}_f\|^2_{\Omega_f^T}.
\end{aligned} \label{eq:time_derivative_eps} 
\end{equation}
In order to treat the second term in the integral above, recall $\mathcal{W}_f$  defined as
\begin{equation*}
\mathcal{W}_f(\tilde u_f) = \int_0^{\tilde u_f} b_f'(\varphi) \, \varphi \ d\varphi,
\end{equation*}
and recall the positivity of the above function as stated in Lemma \ref{lemma:W_properties}. Further, note that the second term in \eqref{eq:time_derivative_eps} can be written down as 

\begin{align*}
  \int_0^t \langle \partial_\tau b_f(\tilde{u}_f^\varepsilon), \tilde{u}_f^\varepsilon \rangle_{W^{-1,2}(\Omega_f),W^{1,2}_0(\Omega_f)} d \tau &=  \int_0^t  \dfrac{d}{d \tau} \left ( \int_{\Omega_f} \mathcal{W}_f(\tilde u_f (\tau)) \, d\vec{x} \right ) d\tau \\
 & = \int_{\Omega_f} \mathcal{W}_f(\tilde u_f (t)) \, d\vec{x} - \int_{\Omega_f} \mathcal{W}_f(\tilde u_{f, I}) \, d\vec{x}. 
\end{align*}

%be the inverse of $b_f$ so that $b_f^{-1}(b_f(\tilde{u}^\varepsilon_f)) = \tilde{u}^\varepsilon_f$. Note that the strict monotonicity of $b_f$ with respect to $\tilde{u}^\varepsilon_f$ ensures that such an inverse exists. Define, 
%\[ B(\tilde{u}^\varepsilon_f) = \int_0^{b_f(\tilde{u}^\varepsilon_f)} b_f^{-1}(\xi) d\xi. \] 
%Note that $B$ is positive since $b_f^{-1}, b_f(\tilde{u}^\varepsilon_f)$ are positive. 
%The second term in \eqref{eq:time_derivative_eps} can be written down as 
%\begin{align}
%\nonumber  \int_0^t (\partial_t b_f(\tilde{u}^\varepsilon_f), \tilde{u}^\varepsilon_f) d\tau  = \int_0^t (\partial_t b_f(\tilde{u}^\varepsilon_f), b_f^{-1} (b_f(\tilde{u}^\varepsilon_f)) d\tau  &= \int_0^t \partial_\tau B(\tilde{u}^\varepsilon_f(\tau)) d\tau \\
% \nonumber &=  B(\tilde{u}^\varepsilon_f(t)) - B(\tilde{u}^\varepsilon_f(0)).
%\end{align}
Using above in \eqref{eq:time_derivative_eps}, we get
\begin{equation}
\begin{aligned}
\varepsilon^{\kappa+1}  \int_0^t \|\partial_\tau b_f \|^2_{{-1},\varepsilon} \, d\tau + 2 \int_{\Omega_f} \mathcal{W}(\tilde{u}^\varepsilon_f(t)) \, d\vec{x}  &\leq \varepsilon^{2 - \kappa- \lambda}  \|\tilde{f}_f\|^2_{\Omega_f^T} + 2 \int_{\Omega_f} \mathcal W(\tilde{u}^\varepsilon_{f,I}) \, d\vec{x}.
\end{aligned} \label{eq:time_derivative_eps2}
\end{equation}
Further, we use the equivalence of norms to obtain,
\begin{equation}
\begin{aligned}
  \int_0^t \|\partial_\tau b_f \|^2_{W^{-1,2}(\Omega_f)} \, d\tau  & \leq \varepsilon^{\lambda-1} \int_0^t \|\partial_\tau b_f \|^2_{{-1},\varepsilon} \, d\tau \leq  \varepsilon^{ - 2 \kappa} \|\tilde{f}_f\|^2_{\Omega_f^t} \\
  &+ 2 \, \varepsilon^{\lambda - \kappa - 2} \int_{\Omega_f} \mathcal W(\tilde{u}^\varepsilon_{f,I}) \, d\vec{x}.
\end{aligned} \label{eq:time_derivative_eps3}
\end{equation}
As we assume that the source term $\varepsilon^{-2 \kappa} \|\tilde{f}_f\|_{\Omega_f^T}^2 \leq C$ and the initial data satisfies $\varepsilon^{\lambda- \kappa - 2} \, \mathcal W(\tilde{u}^\varepsilon_{f,I})  \leq C$, the right hand side is uniformly bounded. The positivity of the second term in \eqref{eq:time_derivative_eps} proves the lemma.  
\end{proof}

From these estimates, we obtain the following convergent subsequences using compactness arguments as $\varepsilon \to 0$:
\begin{equation}
\begin{aligned}
u_{m_j}^\varepsilon &\to U_{m_j} \qquad &\text{strongly in } & L^2(0,T;L^2(\Omega_{m_j})), \\
u_{m_j}^\varepsilon &\rightharpoonup U_{m_j} \qquad &\text{weakly in } & L^2(0,T;\mathcal{V}_{m_j}), \\
%\bar{u}_f^\varepsilon &\to \bar{U}_f \qquad &\text{strongly in } & L^2(0,T;L^2(\Gamma)), \\
%\bar{u}_f^\varepsilon &\rightharpoonup \bar{U}_f \qquad &\text{weakly in } & L^2(0,T;\bar{\mathcal{V}}_f).
\bar{u}_f^\varepsilon &\rightharpoonup \bar{U}_f \qquad &\text{weakly in } & L^2(0,T;L^2(\Gamma)).
\end{aligned}
\label{eq:upscaling_convergence}
\end{equation}
%Moreover, if $\lambda = -1$ we obtain a uniform bound for $\|\partial_y \bar{u}_f^\varepsilon\|^2_{\Gamma^T}$ in Lemma \ref{lemma:upscaling_estimate_2}, implying that for a subsequence
%\begin{equation}
%\bar{u}_f^\varepsilon \rightharpoonup \bar{U}_{f} \qquad \text{weakly in } L^2(0,T;\bar{\mathcal{V}}_f).
%\label{eq:upscaling_weak_convergence_2}
%\end{equation}
For $\lambda \leq -1$, the gradient of the fracture solution $\nabla \tilde{u}_f^\varepsilon$ is bounded in $L^2(0,T;L^2(\Omega_f))$ according to Lemma \ref{lemma:upscaling_estimate}. From this, we infer just as for the matrix block solutions in equation \eqref{eq:upscaling_convergence} that there exists a subsequence of $\varepsilon \to 0$ along which
\begin{equation}
\begin{aligned}
\tilde{u}_f^\varepsilon &\to U_f, \qquad &\text{strongly in } & L^2(0,T;L^2(\Omega_f)), \\
\bar{u}_f^\varepsilon &\to \bar{U}_f, \qquad &\text{strongly in } & L^2(0,T;L^2(\Gamma)), \\
\bar{u}_f^\varepsilon &\rightharpoonup \bar{U}_{f} \qquad &\text{weakly in } & L^2(0,T;\bar{\mathcal{V}}_f), \\
\breve{u}_f^\varepsilon &\to \breve{U}_{f} \qquad &\text{strongly in } &L^2(0,T).
\end{aligned}
\label{eq:upscaling_convergence_2}
\end{equation}
We will make use of Proposition 4.3 in \cite{vanDuijnPop}:
\begin{proposition}
Let $\Omega = (-\frac{1}{2},\frac{1}{2}) \times (0,L)$, $f \in W^{1,2}(\Omega)$, and let $\bar{f}: [0,L] \to \mathbb{R}$ be defined as $\bar{f}(y) = \int_{-\frac{1}{2}}^{\frac{1}{2}} f(\xi,y) \ d\xi$. Then, in the sense of traces
\begin{equation}
\|f(\xi_0,\cdot) - \bar{f}\|_{(0,L)} \leq \|\partial_\xi f\|_{\Omega},
\end{equation}
for each $\xi_0 \in [-\frac{1}{2},\frac{1}{2}]$.
\end{proposition}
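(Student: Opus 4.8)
The plan is to establish the inequality first for smooth functions and then extend to all of $W^{1,2}(\Omega)$ by density together with continuity of the trace operator. The heart of the matter is a one-dimensional Poincaré--Wirtinger estimate in the $\xi$-variable, and the reason the constant comes out to be exactly $1$ is that the cross-section $(-\tfrac12,\tfrac12)$ has unit length.

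So, I would take $f \in C^1(\overline{\Omega})$ and fix $y \in (0,L)$. Using that the measure of $(-\tfrac12,\tfrac12)$ equals one, I would record the identity
\begin{equation*}
f(\xi_0,y) - \bar{f}(y) = \int_{-1/2}^{1/2}\bigl(f(\xi_0,y)-f(\eta,y)\bigr)\,d\eta = \int_{-1/2}^{1/2}\int_{\eta}^{\xi_0}\partial_\xi f(s,y)\,ds\,d\eta.
\end{equation*}
Then I would bound the inner integral by enlarging it to the whole cross-section and applying Cauchy--Schwarz,
\begin{equation*}
\Bigl|\int_{\eta}^{\xi_0}\partial_\xi f(s,y)\,ds\Bigr| \leq \int_{-1/2}^{1/2}|\partial_\xi f(s,y)|\,ds \leq \Bigl(\int_{-1/2}^{1/2}|\partial_\xi f(s,y)|^2\,ds\Bigr)^{1/2},
\end{equation*}
where the unit length of the interval absorbs the would-be constant. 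Integrating the outer $\eta$-integral (again of unit length) leaves the pointwise-in-$y$ estimate $|f(\xi_0,y)-\bar{f}(y)|^2 \leq \int_{-1/2}^{1/2}|\partial_\xi f(s,y)|^2\,ds$.

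Integrating this over $y \in (0,L)$ and taking square roots then yields directly $\|f(\xi_0,\cdot)-\bar{f}\|_{(0,L)} \leq \|\partial_\xi f\|_\Omega$. To pass to general $f \in W^{1,2}(\Omega)$ I would approximate by $f_n \in C^1(\overline{\Omega})$ in the $W^{1,2}(\Omega)$-norm: the right-hand side converges because $\partial_\xi f_n \to \partial_\xi f$ in $L^2(\Omega)$, while on the left both $f_n(\xi_0,\cdot) \to f(\xi_0,\cdot)$ and $\bar{f}_n \to \bar{f}$ in $L^2(0,L)$, the former by continuity of the trace onto the line $\{\xi=\xi_0\}\times(0,L)$ and the latter by the continuity of cross-sectional averaging.

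The only genuinely delicate point is the trace interpretation when $\xi_0$ is interior rather than on $\partial\Omega$: one must read $f(\xi_0,\cdot)$ as the trace of $f$ on the internal line $\{\xi_0\}\times(0,L)$, which is well defined for $W^{1,2}(\Omega)$ by restricting $f$ to either of the two subrectangles $\{\xi < \xi_0\}$ and $\{\xi > \xi_0\}$ and invoking the standard trace theorem there. The inequality is stable under the approximation precisely because this trace operator is continuous from $W^{1,2}$. Beyond keeping this trace bookkeeping consistent, I expect no real obstacle; the remainder is the routine Cauchy--Schwarz computation above.
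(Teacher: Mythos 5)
Your proof is correct and complete: the identity $f(\xi_0,y)-\bar f(y)=\int_{-1/2}^{1/2}\int_{\eta}^{\xi_0}\partial_\xi f(s,y)\,ds\,d\eta$, the Cauchy--Schwarz bound exploiting the unit-length cross-section, and the density/trace-continuity argument (including the careful treatment of interior lines $\{\xi_0\}\times(0,L)$ via a subrectangle) are all sound. Note that the paper itself gives no proof of this statement; it simply imports it as Proposition 4.3 of the cited work of van Duijn and Pop, and your argument is essentially the standard one underlying that reference, so there is nothing to reconcile.
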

This proposition and Lemma \ref{lemma:upscaling_estimate} yield the following estimate:
\begin{lemma}
\label{lemma:upscaling_vanDuijn_new}
There exists a $C > 0$ independent of $\varepsilon$ such that for any $z_0 \in [-\frac{1}{2},\frac{1}{2}]$ it holds
\begin{equation}
\|\tilde{u}_f^\varepsilon(\cdot,z_0,\cdot) - \bar{u}_f^\varepsilon\|_{\Gamma^T} \leq \varepsilon^{\frac{1-\lambda}{2}} C.
\label{eq:vanDuijn}
\end{equation}
\end{lemma}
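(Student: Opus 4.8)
The plan is to apply the quoted Proposition pointwise in time and then integrate in $t$. For a.e.\ fixed $t \in (0,T)$ the slice $\tilde{u}_f^\varepsilon(t,\cdot,\cdot)$ belongs to $W^{1,2}(\Omega_f)$, since $\tilde{u}_f^\varepsilon \in L^2(0,T;\mathcal{V}_f)$ and $\mathcal{V}_f \subset W^{1,2}(\Omega_f)$. Hence the Proposition applies with $\Omega = \Omega_f = (-\tfrac12,\tfrac12)\times(0,1)$ (so $L=1$), transversal variable $\xi = z$, and $f = \tilde{u}_f^\varepsilon(t,\cdot,\cdot)$; its transversal average is exactly $\bar{u}_f^\varepsilon(t,\cdot)$ by definition of the $z$-average. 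This yields, in the sense of traces,
\[
\|\tilde{u}_f^\varepsilon(t,z_0,\cdot) - \bar{u}_f^\varepsilon(t,\cdot)\|_{(0,1)} \leq \|\partial_z \tilde{u}_f^\varepsilon(t,\cdot,\cdot)\|_{\Omega_f}
\]
for every $z_0 \in [-\tfrac12,\tfrac12]$, with a right-hand side that does not depend on $z_0$.

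Next I would square this inequality and integrate over $t$ from $0$ to $T$. Since $t \mapsto \|\partial_z \tilde{u}_f^\varepsilon(t)\|_{\Omega_f}^2$ is integrable (again by $\tilde{u}_f^\varepsilon \in L^2(0,T;\mathcal{V}_f)$), the left-hand side becomes $\|\tilde{u}_f^\varepsilon(\cdot,z_0,\cdot) - \bar{u}_f^\varepsilon\|_{\Gamma^T}^2$ after identifying $\{z_0\}\times(0,1)$ with $\Gamma$ through the $y$-coordinate, and the right-hand side becomes $\|\partial_z \tilde{u}_f^\varepsilon\|_{\Omega_f^T}^2$. This gives
\[
\|\tilde{u}_f^\varepsilon(\cdot,z_0,\cdot) - \bar{u}_f^\varepsilon\|_{\Gamma^T}^2 \leq \|\partial_z \tilde{u}_f^\varepsilon\|_{\Omega_f^T}^2 .
\]

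Finally I would invoke the uniform estimate from Lemma \ref{lemma:upscaling_estimate}, which controls the weighted quantity $\varepsilon^{\lambda-1}\|\partial_z \tilde{u}_f^\varepsilon\|_{\Omega_f^T}^2 \leq C$ with $C$ independent of $\varepsilon$. Rearranging gives $\|\partial_z \tilde{u}_f^\varepsilon\|_{\Omega_f^T}^2 \leq C\,\varepsilon^{1-\lambda}$, and taking square roots produces the claimed bound $\|\tilde{u}_f^\varepsilon(\cdot,z_0,\cdot) - \bar{u}_f^\varepsilon\|_{\Gamma^T} \leq \varepsilon^{(1-\lambda)/2}\sqrt{C}$, after renaming the constant.

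There is no serious obstacle: the result is essentially a weighted transversal Poincar\'{e}/averaging inequality, and the only point requiring attention is the bookkeeping that makes the $\varepsilon$-power come out as $(1-\lambda)/2$. This exponent is precisely the one carried by the $\partial_z$ term in Lemma \ref{lemma:upscaling_estimate}, reflecting the anisotropic rescaling $z = x/\varepsilon$, so everything is consistent. I would also record that the bound is uniform in $z_0$, since the right-hand side of the Proposition is independent of $z_0$; this uniformity is what later permits comparing the trace on $\Gamma$ with the transversal average.
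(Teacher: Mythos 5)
Your proposal is correct and follows exactly the route the paper intends: the paper states that Proposition 4.3 of \cite{vanDuijnPop} together with Lemma \ref{lemma:upscaling_estimate} yield the estimate, which is precisely your argument of applying the proposition slice-wise in time, integrating the squared inequality over $(0,T)$, and then absorbing the $\varepsilon^{\lambda-1}$ weight from the uniform bound $\varepsilon^{\lambda-1}\|\partial_z \tilde{u}_f^\varepsilon\|^2_{\Omega_f^T} \leq C$ to produce the exponent $\frac{1-\lambda}{2}$. Your additional remark that the bound is uniform in $z_0$ is also consistent with how the lemma is used later in the upscaling theorem.
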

Lemma \ref{lemma:upscaling_vanDuijn_new} shows that $\lambda \leq 1$ is sufficient in order to keep the left hand side in equation \eqref{eq:vanDuijn} bounded when $\varepsilon$ vanishes (and convergence is achieved for $\lambda < 1$). \\

\subsection{Upscaling theorem}
It remains to show that the limit functions are a solution to the respective effective model:
%\begin{theorem}[Upscaling theorem]
%The triple $(U_{m_1}, U_{m_2}, \bar{U}_f)$ is a solution to the effective model in the sense of Definition \ref{def:effective_model_weak}.
%\begin{theorem}[Upscaling theorem]
%For the following ranges of $\kappa$ and $\lambda$, these tupels are a solution to the following effective models:
%\begin{equation}
%\begin{aligned}
%& \kappa = -1,& \lambda &= -1:& \qquad & (U_{m_1}, U_{m_2}, \bar{U}_f)& &{\color{color1}\text{Effective model I}}, \\
%&\kappa \in (-1,\infty),& \lambda &= -1:& \qquad & (U_{m_1}, U_{m_2}, \bar{U}_f)& &{\color{color2}\text{Effective model II}}, \\
%&\kappa = -1,& \lambda &\in (-1,1):& \qquad & (U_{m_1}, U_{m_2}, \bar{U}_f)& &{\color{color3}\text{Effective model III}}, \\
%&\kappa \in (-1,\infty),& \lambda &\in (-1,1):& \qquad & (U_{m_1}, U_{m_2}, \bar{U}_f)& &{\color{color4}\text{Effective model IV}}, \\
%&\kappa = -1,& \lambda &= 1:& \qquad & (U_{m_1}, U_{m_2}, U_f)& &{\color{color5}\text{Effective model V}}, \\
%&\kappa \in (-1,\infty),& \lambda &= 1:& \qquad & (U_{m_1}, U_{m_2}, U_f)& &{\color{color6}\text{Effective model VI}}, \\
%&\kappa = -1,& \lambda &\in (1,\infty):& \qquad & (U_{m_1}, U_{m_2}, U_f)& &{\color{color7}\text{Effective model VII}}, \\
%&\kappa \in (-1,\infty),& \lambda &\in (1,\infty):& \qquad & (U_{m_1}, U_{m_2})& &{\color{color7star}\text{Effective model VII$^*$}}.
%\end{aligned}
%\end{equation}
\begin{theorem}[Upscaling theorem]
For the following ranges of $\kappa$ and $\lambda$, these tupels are a solution to the following effective models:
\begin{equation}
\begin{aligned}
& \kappa = -1,& \lambda &= -1:& \qquad & (U_{m_1}, U_{m_2}, \bar{U}_f)& &{\color{color1} \textnormal{Effective model I}}, \\
& \kappa \in (-1,\infty),& \lambda &= -1:& \qquad & (U_{m_1}, U_{m_2}, {U}_f)& &{\color{color2}\textnormal{Effective model II}},\\
& \kappa = -1, & \lambda &\in (-\infty, -1):& \qquad & (U_{m_1}, U_{m_2}, \breve{U}_f) & &{\color{color3}\textnormal{Effective model III}}, \\
& \kappa \in (-1,\infty), & \lambda &\in (-\infty, -1):& \qquad & (U_{m_1}, U_{m_2}, \breve{U}_f) & & {\color{color4}\textnormal{Effective model IV}},\\
& \kappa \in (-1,\infty), & \lambda &\in (-1, 1):& \qquad & (U_{m_1}, U_{m_2}, \bar{U}_f) & & {\color{color5}\textnormal{Effective model V}}.
\end{aligned}
\end{equation}
%For no flow Neumann conditions in the fracture, that is $\mathcal{V}_f = W^{1,2}(\Omega_f)$, this result holds additionally for the following parameter ranges and respective tupels and effective models:
%\begin{equation}
%\begin{aligned}
%\end{aligned}
%\end{equation}
\begin{proof}
\noindent 
We give the proof for $\lambda \geq -1$, the other cases are treated similarly, where one uses spatially constant test functions for the fracture in $\eqref{eq:integrated_variant}$, i.e. $\phi_f(t,z,y) = \phi_f(t)$. Note that for the choice $\mathcal{V}_f = W^{1,2}_0(\Omega_f)$, the only  spatially constant test function is the zero function, but the extension to more general boundary conditions is straightforward. \\

%Note that due to $\mathcal{V}_f = W^{1,2}(\Omega_f)$, the space $\mathcal{V}_f$ contains constant non-zero test functions.
%For the case $\lambda < 1$ we will use the a priori estimates for the $z$-averaged function $\bar{u}_f^\varepsilon$. 
Choose arbitrary test functions $(\phi_{m_1}, \phi_{m_2}, \phi_f) \in W^{1,2}(0,T;\mathcal{V}_{m_1}) \times W^{1,2}(0,T;\mathcal{V}_{m_2}) \times W^{1,2}(0,T;\bar{\mathcal{V}}_f)$ satisfying $\phi_{m_1}(t,z,y)|_{\Gamma_1} = \phi_f(t,y) = \phi_{m_2}(t,z,y)|_{\Gamma_2}$ for a.e. $t \in [0,T]$ and $\phi_\rho(T) = 0$ for $\rho \in \lbrace m_1, m_2, f \rbrace$ and denote the terms in equation \eqref{eq:integrated_variant} by $I_1, \ldots, I_{8}$. For all values of $\kappa$ and $\lambda$, the term $I_6$ vanishes in the limit $\varepsilon \to 0$ due to
\begin{equation*}
|I_6| = \varepsilon \ \Big{|}\! \left(\bar{f}_f^\varepsilon, \phi_f\right)_{\Gamma^T}\!\Big{|} \leq \varepsilon \|\bar{f}_f^\varepsilon\|_{\Gamma^T} \|\phi_f\|_{\Gamma^T} \to 0.
\end{equation*}
The terms $I_5$ and $I_7$ do not depend on $\varepsilon$ and remain unchanged as $\varepsilon$ approaches zero.
The strong $L^2$ convergence from equation \eqref{eq:upscaling_convergence}$_1$ and the Lipschitz continuity of $b_m$ give
\begin{equation*}
I_1 = - \sum_{j=1}^2 \left(b_m(u_{m_j}^\varepsilon), \partial_t \phi_{m_j}\right)_{\Omega_{m_j}^T} \to - \sum_{j=1}^2 \left(b_m(U_{m_j}), \partial_t \phi_{m_j}\right)_{\Omega_{m_j}^T}.
\end{equation*}
Making use of the weak convergence from equation \eqref{eq:upscaling_convergence}$_2$, we obtain
\begin{equation*}
I_3 = \sum_{j=1}^2 \left(\nabla u_{m_j}^\varepsilon, \nabla \phi_{m_j}\right)_{\Omega_{m_j}^T} \to \left(\nabla U_{m_j}, \nabla \phi_{m_j}\right)_{\Omega_{m_j}^T}.
\end{equation*}
As regards the fracture solution, we distinguish the cases $\lambda = -1$ and $\lambda > -1$: in case of $\lambda = -1$, the weak convergence from equation \eqref{eq:upscaling_convergence_2}$_3$ yields
\begin{equation*}
I_4 = \left(\partial_y \bar{u}_f^\varepsilon, \partial_y \phi_f\right)_{\Gamma^T} \to \left(\partial_y \bar{U}_f, \partial_y \phi_f\right)_{\Gamma^T},
\end{equation*}
whereas for $\lambda > -1$, we get
\begin{equation*}
|I_4| = \varepsilon^{\lambda+1} \ \Big{|}\!\left(\partial_y \bar{u}_f^\varepsilon, \partial_y \phi_f\right)_{\Gamma^T}\!\Big{|} \leq \varepsilon^{\lambda+1} \|\partial_y \bar{u}_f^\varepsilon\|_{\Gamma^T} \|\partial_y \phi_f\|_{\Gamma^T} \leq \varepsilon^{\frac{\lambda+1}{2}} C \|\partial_y \phi_f\|_{\Gamma^T} \to 0,
\end{equation*}
where we made use of the estimate for $\partial_y \bar{u}_f^\varepsilon$ in Lemma \ref{lemma:upscaling_estimate_2}. \\
It remains to consider the terms $I_2$ and $I_{8}$. Here, we make a distinction between the cases $\kappa = -1$ and $\kappa > -1$. First, consider the case $\kappa = -1$, where $I_2 = -\left(\bar{b}_f(\tilde{u}_f^\varepsilon), \partial_t \phi_f\right)_{\Gamma^T}$ and $I_{8}$ is independent of $\varepsilon$.  \\
For the term $I_2$, we start by estimating
\begin{equation*}
\begin{aligned}
\Bigg{|}\left(\bar{b}_f(\tilde{u}_f^\varepsilon) - b_f(\bar{U}_f), \partial_t \phi_f\right)_{\Gamma^T}\Bigg{|} &\leq \Bigg{|}\left(\bar{b}_f(\tilde{u}_f^\varepsilon)-b_f(\bar{u}_f^\varepsilon), \partial_t \phi_f \right)_{\Gamma^T}\Bigg{|} \\
&+ \Bigg{|}\left(b_f(\bar{u}_f^\varepsilon)-b_f(\bar{U}_f), \partial_t \phi_f\right)_{\Gamma^T} \Bigg{|}.
\end{aligned}
\end{equation*}
For the first term on the right hand side we obtain using the Lipschitz continuity of $b_f$ and Lemma \ref{lemma:upscaling_vanDuijn_new}
\begin{equation*}
\begin{aligned}
\Bigg{|}\left(\bar{b}_f(\tilde{u}_f^\varepsilon)-b_f(\bar{u}_f^\varepsilon), \partial_t \phi_f \right)_{\Gamma^T}\Bigg{|} &\leq
\|\bar{b}_f(\tilde{u}_f^\varepsilon)-b_f(\bar{u}_f^\varepsilon)\|_{\Gamma^T} \|\partial_t \phi_f\|_{\Gamma^T} \\ &=
\Big{\|}\int_{-\frac{1}{2}}^{\frac{1}{2}} \left( b_f(\tilde{u}_f^\varepsilon) - b_f(\bar{u}_f^\varepsilon) \right) dz \, \Big{\|}_{\Gamma^T} \|\partial_t \phi_f\|_{\Gamma^T} \\ &\leq 
M_S \Big{\|}\int_{-\frac{1}{2}}^{\frac{1}{2}} \Big{|} \tilde{u}_f^\varepsilon - \bar{u}_f^\varepsilon \Big{|} \, dz \, \Big{\|}_{\Gamma^T} \|\partial_t \phi_f\|_{\Gamma^T} \\ &\leq
M_S C \, \varepsilon^{\frac{1-\lambda}{2}} \|\partial_t \phi_f\|_{\Gamma^T},
\end{aligned}
\end{equation*}
which goes to zero as $\varepsilon \to 0$. \\
The second term vanishes due to
\begin{equation*}
\Bigg{|}\left(b_f(\bar{u}_f^\varepsilon)-b_f(\bar{U}_f), \partial_t \phi_f\right)_{\Gamma^T} \Bigg{|} \leq M_S \|\bar{u}_f^\varepsilon - \bar{U}_f\|_{\Gamma^T} \|\partial_t \phi_f\|_{\Gamma^T},
\end{equation*}
and the strong convergence in equation \eqref{eq:upscaling_convergence_2}$_2$. This shows that
\begin{equation*}
I_2 = -\left(\bar{b}_f(\tilde{u}_f^\varepsilon), \partial_t \phi_f\right)_{\Gamma^T} \to -\left(b_f(\bar{U}_f), \partial_t \phi_f\right)_{\Gamma^T}.
\end{equation*}
%}
For $\kappa > -1$, we estimate
\begin{equation*}
|I_2| = \varepsilon^{\kappa+1} \ \Big{|}\! \left(\bar{b}_f(\tilde{u}_f^\varepsilon),\partial_t \phi_f\right)_{\Gamma^T}\!\Big{|} \leq \varepsilon^{\kappa+1} M_S \|\tilde{u}_f^\varepsilon\|_{\Omega_f^T} \|\partial_t \phi_f\|_{\Gamma^T} \leq \varepsilon^{\kappa+1} M_S C  \|\partial_t \phi_f\|_{\Gamma^T},
\end{equation*}
which vanishes in view of Lemma \ref{lemma:upscaling_estimate_2}, and similarly, we get
\begin{equation*}
|I_{8}| = \varepsilon^{\kappa+1} \Big{|}\! \left(\bar{b}_f(\tilde{u}_{f,I}),\phi_f(0)\right)_{\Gamma}\!\Big{|} \leq  \varepsilon^{\kappa+1} M_S \|\tilde{u}_{f,I}\|_{\Omega_f} \|\phi_f(0)\|_{\Gamma} \leq \varepsilon^{\kappa+1} M_S C \|\phi_f(0)\|_{\Gamma}
\end{equation*}
vanishing in the limit. Finally, the Dirichlet interface condition for the pressure head has to be proven. It turns out that a weakly convergent subsequence in $L^2(0,T;L^2(\Gamma))$ in the fracture suffices for this purpose. %, and Assumption $(A_\nabla)$ is not necessary at this point. 
%We present the proof for the interface $\Gamma_1$, the proof for the interface $\Gamma_2$ follows along the same lines. \\
As the weak convergence of $\bar{u}_f^\varepsilon$ towards $\bar{U}_f$ does not directly imply the weak convergence of $\mathcal{K}_f^{-1}(\bar{u}_f^\varepsilon)$ towards $\mathcal{K}_f^{-1}(\bar{U}_f)$, we define the function $\mathcal{R}(u_{m_j}) := (\mathcal{K}_f \circ \mathcal{K}_m^{-1})(u_{m_j})$ in order to transform the interface condition $\mathcal{K}_m^{-1} (U_{m_j}) = \mathcal{K}_f^{-1} (\bar{U}_f)$ on $\Gamma_j$ into a linear expression in $\bar{U}_f$, namely
\begin{equation*}
\mathcal{R}(U_{m_j}) = \bar{U}_f \qquad \text{on } \Gamma_j.
\end{equation*}
Now we take an arbitrary test function $\phi \in L^2(0,T;L^2(\Gamma_j))$ and estimate
\begin{equation*}
\begin{aligned}
\Big{|}\left(\mathcal{R}(U_{m_j}) - \bar{U}_f, \phi\right)_{\Gamma_j^T}\Big{|} &\leq \Big{|}\left(\mathcal{R}(U_{m_j}) - \mathcal{R}(u_{m_j}^\varepsilon), \phi\right)_{\Gamma_j^T}\Big{|} + \Big{|}\left(\mathcal{R}(u_{m_j}^\varepsilon) - \tilde{u}_f^\varepsilon, \phi\right)_{\Gamma_j^T}\Big{|} \\
&\hspace{0.2cm} + \Big{|}\left(\tilde{u}_f^\varepsilon - \bar{u}_f^\varepsilon, \phi\right)_{\Gamma_j^T}\Big{|} + \Big{|}\left(\bar{u}_f^\varepsilon - \bar{U}_f, \phi\right)_{\Gamma_j^T}\Big{|}.
\end{aligned}
\end{equation*}
Let us denote the terms on the right hand side by $J_1, \ldots, J_4$. As $u_{m_j}^\varepsilon$ and $\tilde{u}_f^\varepsilon$ satisfy the interface condition, we immediately get $J_2 = 0$. Note that Assumption $(A_K)$ implies the Lipschitz continuity of $\mathcal{R}$ with Lipschitz constant $\frac{M_K}{m_K}$. Making use of this and the Cauchy--Schwarz inequality yields
\begin{equation*}
J_1 \leq \frac{M_K}{m_K} \|U_{m_j} - u_{m_j}^\varepsilon\|_{\Gamma_j^T} \|\phi\|_{\Gamma_j^T},
\end{equation*}
and one shows as in the proof of Theorem \ref{thrm:existence} that $J_1 \to 0$ in the limit $\varepsilon \to 0$ using the trace inequality, the strong convergence in equation \eqref{eq:upscaling_convergence}$_1$ and the boundedness of the gradient due to the weak convergence in equation \eqref{eq:upscaling_convergence}$_2$. For the term $J_3$, we obtain
\begin{equation*}
J_3 \leq \|\tilde{u}_f^\varepsilon - \bar{u}_f^\varepsilon\|_{\Gamma_j^T} \|\phi\|_{\Gamma_j^T},
\end{equation*}
and from Lemma \ref{lemma:upscaling_vanDuijn_new} we infer that $J_3 \to 0$. Finally, the weak convergence in equation \eqref{eq:upscaling_convergence_2}$_3$ yields $J_4 \to 0$. Since $\phi \in L^2(0,T;L^2(\Gamma_j))$ was arbitrary, one has $\mathcal{R}(U_{m_j}) = \bar{U}_f$ on $\Gamma_j$ and therefore $\mathcal{K}_m^{-1} (U_{m_j}) = \mathcal{K}_f^{-1} (\bar{U}_f)$ on $\Gamma_j$ in the sense of traces, which concludes the proof.% in the case $\lambda < 1$. \\

%We make a special mention of the case when $\lambda \in (-1,1)$. Note that in equation \eqref{eq:integrated_variant},  the term ${I}_2$ drops in the limit due to $\kappa > -1$. The term in the fracture that we need to check is $I_4$. Due to the Cauchy Schwarz inequality, 
%
%\[ |\varepsilon^{\lambda+1} (\partial_y \bar{u}_f^\varepsilon, \partial_y \phi_f)_{\Gamma^T}| \leq \varepsilon^{\frac{\lambda+1} {2}} \|\partial_y \bar{u}_f^\varepsilon\| \varepsilon^{\frac{\lambda+1} {2}} \|\partial_y \phi_f\|\] and using the estimate  for $\partial_y \bar{u}_f^\varepsilon$ in Lemma \ref{lemma:upscaling_estimate_2} implies the vanishing of the term. \\
 
\end{proof}
\end{theorem}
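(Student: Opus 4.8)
The plan is to pass to the limit $\varepsilon \to 0$ directly in the integrated weak formulation \eqref{eq:integrated_variant}, handling each of its eight terms separately; denote them by $I_1,\dots,I_8$ in the order written ($I_1$ matrix storage, $I_2$ fracture storage, $I_3$ matrix diffusion, $I_4$ fracture diffusion, $I_5$ matrix source, $I_6$ fracture source, $I_7,I_8$ the initial-data terms). The whole argument is organised around the two dichotomies $\lambda = -1$ versus $\lambda \neq -1$ and $\kappa = -1$ versus $\kappa > -1$, because the prefactors $\varepsilon^{\lambda+1}$ and $\varepsilon^{\kappa+1}$ multiplying $I_4$ and $I_2$, $I_8$ decide whether these fracture contributions survive, vanish, or must be identified with a genuine one-dimensional operator. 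For $\lambda < -1$ (Effective models III and IV) I would additionally restrict to spatially constant test functions $\phi_f(t,z,y) = \phi_f(t)$, which collapses $I_4$ entirely and leaves only an ODE, respectively an algebraic flux-balance, for $\breve{U}_f$.

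For the benign terms I would argue as follows. The fracture source $I_6 = \varepsilon (\bar f_f, \phi_f)_{\Gamma^T}$ is $O(\varepsilon)$ by Cauchy--Schwarz and Lemma \ref{lemma:upscaling_estimate_2}, hence vanishes. The matrix terms $I_1$ and $I_3$ pass to their limits via the strong $L^2$ convergence \eqref{eq:upscaling_convergence}$_1$ together with the Lipschitz continuity of $b_m$, and via the weak gradient convergence \eqref{eq:upscaling_convergence}$_2$ respectively; here the Kirchhoff transformation is decisive, as it renders the bulk flux $\nabla u_{m_j}^\varepsilon$ \emph{linear}, so no nonlinear weak-limit identification is needed in the matrix. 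The $\varepsilon$-independent terms $I_5, I_7$ are carried over unchanged. For the fracture diffusion $I_4 = \varepsilon^{\lambda+1} (\partial_y \bar u_f^\varepsilon, \partial_y \phi_f)_{\Gamma^T}$, when $\lambda = -1$ the prefactor is one and the weak convergence \eqref{eq:upscaling_convergence_2}$_3$ produces the limit $(\partial_y \bar U_f, \partial_y \phi_f)_{\Gamma^T}$, the $1$D Laplacian of Effective models I and II; when $\lambda > -1$ the bound $\varepsilon^{\lambda+1} \|\partial_y \bar u_f^\varepsilon\|_{\Gamma^T} \le \varepsilon^{(\lambda+1)/2} C$ from Lemma \ref{lemma:upscaling_estimate_2} forces $I_4 \to 0$, reflecting the loss of transmissivity in Effective model V.

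The crux is the nonlinear storage term $I_2 = -\varepsilon^{\kappa+1} (\bar b_f(\tilde u_f^\varepsilon), \partial_t \phi_f)_{\Gamma^T}$. For $\kappa > -1$ both $I_2$ and $I_8$ vanish through their prefactors and the uniform bounds on $\|\tilde u_f^\varepsilon\|_{\Omega_f^T}$ and $\|\tilde u_{f,I}\|_{\Omega_f}$, yielding the models with no storage. The delicate case is $\kappa = -1$, where I must show $\bar b_f(\tilde u_f^\varepsilon) \to b_f(\bar U_f)$ in $L^2(\Gamma^T)$, i.e.\ that transversal averaging and the nonlinearity commute in the limit. The device is to insert $b_f(\bar u_f^\varepsilon)$ and split into $\bar b_f(\tilde u_f^\varepsilon) - b_f(\bar u_f^\varepsilon)$ and $b_f(\bar u_f^\varepsilon) - b_f(\bar U_f)$: the first piece is bounded, via the Lipschitz constant $M_S$ of $b_f$ and the transversal-oscillation estimate \eqref{eq:vanDuijn} of Lemma \ref{lemma:upscaling_vanDuijn_new}, by $M_S C\,\varepsilon^{(1-\lambda)/2}$, which vanishes precisely because we assume $\lambda < 1$; the second piece vanishes by the strong convergence \eqref{eq:upscaling_convergence_2}$_2$ of $\bar u_f^\varepsilon$. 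This is the step I expect to be the main obstacle, since it is the only place where transversal averaging, the saturation nonlinearity, and the scaling exponents must be balanced quantitatively, and it is exactly what singles out the threshold $\lambda = 1$.

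Finally I would establish the Dirichlet interface condition. Since the Kirchhoff transform is nonlinear, the weak convergence $\bar u_f^\varepsilon \rightharpoonup \bar U_f$ does not transfer through $\mathcal{K}_f^{-1}$, so I would first linearise the trace identity by composing with $\mathcal{R} := \mathcal{K}_f \circ \mathcal{K}_m^{-1}$, reducing $\mathcal{K}_m^{-1}(U_{m_j}) = \mathcal{K}_f^{-1}(\bar U_f)$ to $\mathcal{R}(U_{m_j}) = \bar U_f$ on $\Gamma_j$. Testing the difference against arbitrary $\phi \in L^2(0,T;L^2(\Gamma_j))$ and splitting into four pieces $J_1,\dots,J_4$, I would use: that $u_{m_j}^\varepsilon$ and $\tilde u_f^\varepsilon$ satisfy the interface condition exactly, so $J_2 = 0$; the Lipschitz continuity of $\mathcal{R}$ (constant $M_K/m_K$ from Assumption $(A_K)$) with the trace inequality and the matrix convergences \eqref{eq:upscaling_convergence}$_{1,2}$ to kill $J_1$ as in Theorem \ref{thrm:existence}; the oscillation estimate \eqref{eq:vanDuijn} for $J_3$; and the weak convergence \eqref{eq:upscaling_convergence_2}$_3$ of $\bar u_f^\varepsilon$ for $J_4$. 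As $\phi$ is arbitrary, the identity $\mathcal{R}(U_{m_j}) = \bar U_f$ holds in the sense of traces, completing the identification of each effective model.
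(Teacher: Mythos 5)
Your proposal is correct and follows essentially the same route as the paper's proof: the same term-by-term decomposition $I_1,\dots,I_8$ of \eqref{eq:integrated_variant}, the same case split on $\lambda=-1$ versus $\lambda>-1$ and $\kappa=-1$ versus $\kappa>-1$, the same insertion of $b_f(\bar{u}_f^\varepsilon)$ combined with Lemma \ref{lemma:upscaling_vanDuijn_new} and the strong convergence \eqref{eq:upscaling_convergence_2}$_2$ for the nonlinear storage term, and the same linearisation $\mathcal{R}=\mathcal{K}_f\circ\mathcal{K}_m^{-1}$ with the $J_1,\dots,J_4$ splitting for the interface condition. The only cosmetic difference is that you state the spatially constant test-function reduction for $\lambda<-1$ up front, whereas the paper defers it to a remark at the start of its proof.
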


\begin{remark}
The porous matrix domain $\Omega_{m_1}$ has the  interface at $x = 0$ with fracture domain that corresponds to $z = -1$ from the fracture domain side. Similarly, for $\Omega_{m_2}$ has the  interface at $x = 0$ with fracture domain boundary at $z = 1$. In the case when the solution in the fracture domain  is independent of $z$, the fracture collapses as an interface (see the effective equations 1 -- 5 above) and the two interfaces from the porous matrix sides coincide. 
\end{remark}

\begin{remark}
We remark now the reason for leaving out the case  when $\kappa =  -1, \lambda \in (-1,1)$ in our analysis. From \eqref{eq:upscaling_convergence} we get the boundedness of $u_f^\varepsilon$ and the smoothness of   $\bar{b}_f(u_f^\varepsilon)$ implies the existence of a weak limit for $\bar{b}_f^\varepsilon$. However, the identification of this limit in \eqref{eq:integrated_variant} to $\bar b_f(u_f)$ requires a strong convergence of $\bar u_f^\varepsilon$. As the estimates in Lemma \ref{lemma:upscaling_estimate_2} show, the gradients of $\bar{u}_f^\varepsilon$ is not bounded uniformly with respect to $\varepsilon$ and therefore, the strong convergence of $\bar u_f^\varepsilon$ cannot be deduced. We therefore exclude this case in our analysis.  
\end{remark}

\section{Numerical simulation}
\label{sec:simulations}
This section is dedicated to a numerical study aiming at the numerical validation of the theoretical upscaling result. The simulation is carried out using a standard finite volume scheme implemented in MATLAB. The code solves the model in physical variables as stated in Problem $\mathcal{P}_\varepsilon$. We use a matching grid, composed of uniform rectangular cells for partitioning the two-dimensional subdomains, and intervals of equal size for the one-dimensional fracture in the effective model. The flux is computed with a two-point flux approximation (TPFA) scheme. We use an implicit Euler discretisation in time with fixed time step, and the modified Picard scheme for the linearisation. We employ a monolithic approach and solve the system of equations for the entire domain at once. 

\subsection{Realistic example ($\kappa = \lambda = -1$)}
Our numerical example deals with the injection of water into an aquifer, which is crossed by a fracture featuring a higher permeability. Boundary and initial conditions are illustrated in Figure \ref{fig:BC}. Although our analysis was limited to homogeneous Dirichlet conditions, we expect the theoretical results to hold for the more interesting boundary conditions in our numerical example as well. In the simulations with a two-dimensional fracture, the dimensionless fracture width takes the values $\varepsilon \in \lbrace 1, 0.1, 0.01, 0.001, 0.0001 \rbrace$. We impose no flow conditions on the boundary except for the inflow region in the lower edge of the left matrix block subdomain and the right upper boundary, where a Dirichlet condition allows for outflow. Thus, the water must enter or cross the fracture to leave the domain. The parameters of the van Genuchten parametrisation of the saturation and the hydraulic conductivity are listed in Figure \ref{fig:BC}. These parameters are taken from \cite{vanGenuchten}, and correspond to silt loam and Touchet silt loam in the matrix blocks and in the fracture, respectively. 
%{\color{red} Note that we fix the parameter $\alpha$, which is of dimension $[L^{-1}]$, and hence implicitly assume a fixed reference length $L$.} 
We fix the reference length $L = 1 \, [\text{m}]$ and take the end time of the simulation to be $T = 0.45$. The time step is chosen as $0.015$. The grid size is taken as $\Delta x = \Delta y = 1/160$ in the matrix blocks and $\Delta y = 1/160$ and $\Delta x \in \lbrace 1/160, 1/800, 1/4000, 1/20000, 1/100000 \rbrace$ in the fracture, corresponding to the different fracture widths $\varepsilon$. \\ 
%%%%%%%%%%%%% Figure %%%%%%%%%%%%%%%
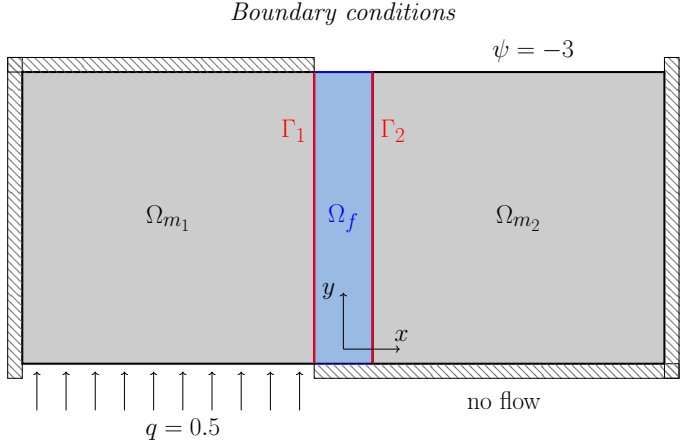
\begin{figure}[!h]
%\begin{center}
%\hspace{-8cm}
\resizebox{0.8\textwidth}{!}{%
\usetikzlibrary{patterns}
\definecolor{myblue}{rgb}{0.6,0.73,0.89}
\def\leftindent{-14}
\begin{tikzpicture}[
pile/.style={thick, ->, =>', shorten <=0pt, shorten
    >=2pt}]
{\Huge

\draw [line width = 2pt, fill=black!20] (\leftindent-11,0) rectangle (\leftindent-1,10);
\draw [line width = 2pt, fill=black!20] (\leftindent+1,0) node (v1) {} rectangle (\leftindent+11,10);
\draw [blue, line width = 2pt, fill=myblue] (\leftindent-1,0) rectangle (\leftindent+1,10);

\node(a) at (\leftindent-1,10.3) {};
\node(b) at (\leftindent-1,-0.3) {};
\node(c) at (\leftindent+1,-0.3) {};
\node(d) at (\leftindent+1,10.3) {};
\draw [red, line width = 2pt] (a) -- (b);
\draw [red, line width = 2pt] (c) -- (d);

\node(i) at (\leftindent-6,5) {$\Omega_{m_1}$};
\node(j) at (\leftindent+6,5) {$\Omega_{m_2}$};
\node(k) [blue] at (\leftindent,5) {$\Omega_f$};

\node(l) [red, left] at (\leftindent-1,8) {$\Gamma_1$};
\node(m) [red, right] at (\leftindent+1,8) {$\Gamma_2$};

\draw[pile] (\leftindent+0.0,0.5) -- (\leftindent+0.0,2.5) node [pos = 1, left] {$y$};
\draw[pile] (\leftindent+0.0,0.5) -- (\leftindent+2.0,0.5) node [pos = 1, above] {$x$};

\node(z) [] at (-7,22) {\emph{Van Genuchten parameters}};
\node(zz) [] at (\leftindent-7,22) {\emph{Geometry}};
\node(zzz) [] at (\leftindent,+12) {\emph{Boundary conditions}};

% No flow
\draw [pattern=north west lines, pattern color=gray] (\leftindent-11.5,10.5) rectangle (\leftindent-11,-0.5);
\draw [pattern=north west lines, pattern color=gray] (\leftindent-1,0) rectangle (\leftindent+11.5,-0.5);
\draw [pattern=north west lines, pattern color=gray] (\leftindent-11.5,10) rectangle (\leftindent-1,10.5);
\draw [pattern=north west lines, pattern color=gray] (\leftindent+11.5,10.5) rectangle (\leftindent+11,-0.5);

\node(q) at (\leftindent-7,19) {
$
\begin{aligned} 
\Omega_{m_1} &= (-1-\varepsilon/2,-\varepsilon/2) \times (0,1) \\
\Omega_{m_2} &= (\varepsilon/2,1+\varepsilon/2) \times (0,1) \\
\Omega_{f}^\varepsilon &= (-\varepsilon/2,\varepsilon/2) \times (0,1) \\[1cm]
\end{aligned}
$
};

\node(zzzz) [] at (\leftindent-7, 16.75) {\emph{Initial condition and source term}};
\node(y) [] at (\leftindent-7, 15.5) {$\psi_I \equiv -3$};
\node(ya) [] at (\leftindent-7.12, 14.2) {$f \equiv 0$};

% Text
\node(n) [] at (\leftindent+5.5,-1.25) {no flow};
\node(o) [] at (\leftindent+6.5,10.75) {$\psi = -3$};

% Arrows for Neumann BC
\foreach \x in {0,...,9}
\draw[pile] (\leftindent-10.5+\x*1,-1.6) -- (\leftindent-10.5+\x*1,-0.2);

\node [below] at (\leftindent-5.5, -1.6) {$q = 0.5$};

% Table
\node(z) at (-7, 17.5) {
\begin{tabular}{@{}lll@{}}
\toprule
                    & Fracture           & Solid matrix \\
\midrule 
$\alpha$               & $0.500$                & $0.423$ \\
$\theta_S$             & $0.469$                & $0.396$ \\
$\theta_R$             & $0.190$                & $0.131$ \\
$n$                    & $7.09$                 & $2.06$ \\
$K_S$				   & $3.507 \times 10^{-5}$ & $5.74 \times 10^{-7}$ \\
%$K_{S,f} / K_{S,m}$    & \multicolumn{2}{l}{$\ \ \ \ \ \ \, 61.10$} \\
\bottomrule
\end{tabular}
};

}
\end{tikzpicture}
}
%\end{center}
\caption{Simulation parameters for the realistic example: geometry, initial and boundary conditions, and van Genuchten parameters}
\label{fig:BC}
\end{figure}\\
%%%%%%%%%%%%%%%%%%%%%%%%%%%%%%%%%%%%
The van Genuchten--Mualem parametrisation in our dimensionless setting writes as
\begin{equation}
\begin{aligned}
S_\rho(\psi_\rho) &=
\begin{cases}
\frac{\theta_{R,\rho}}{\theta_{S,\rho}} + (1 - \frac{\theta_{R,\rho}}{\theta_{S,\rho}}) \left[\frac{1}{1+(-\alpha_\rho \psi_\rho)^{n_\rho}}\right]^{\frac{{n_\rho}-1}{n_\rho}}, & \psi_\rho \leq 0, \\
\theta_{S,\rho}, & \psi_\rho > 0,
\end{cases} \\
K_\rho(S_\rho(\psi_\rho)) &= 
\begin{cases}
\Theta_{\text{eff},\rho}(\psi_\rho)^{\frac{1}{2}} \left[1-\left(1-\Theta_{\text{eff},\rho}(\psi_\rho)^{\frac{n_\rho}{n_\rho-1}}\right)^{\frac{n_\rho-1}{n_\rho}}\right]^2, & \psi_\rho \leq 0, \\
1, & \psi_\rho > 0,
\end{cases}
\end{aligned}
\label{eq:Van_Genuchten}
\end{equation}
where $\theta_S$ stands for the water content of the fully saturated porous medium, $\theta_R$ denotes the residual water content, $\alpha$ and $n$ are curve fitting parameters expressing the soil properties, and $\Theta_{\text{eff}}(\psi) := \frac{\theta(\psi) - \theta_{R}}{\theta_{S} - \theta_{R}}$ is the effective saturation. \\
The porosity ratio and the ratio of the reference hydraulic conductivities shall be given by 
\begin{equation}
\frac{\phi_f}{\phi_m} = \frac{\theta_{S,f}}{\theta_{S,m}} \varepsilon^{-1}, \qquad \text{and} \qquad
\frac{\bar{K}_f}{\bar{K}_m} = \frac{K_{S,f}}{K_{S,m}} \varepsilon^{-1},
\end{equation}
respectively, where $K_S$ denotes the saturated hydraulic conductivity.

%Moreover, we take
%\begin{equation}
%C_{S,m} = 1, \qquad C_{S,f} = \frac{K_{S,f}}{K_{S,m}},
%\end{equation}
%$\bar{K}_f / \bar{K}_m = \varepsilon^{-1}$, and $\bar{K}_m = K_{S,m}$. For the porosities, we set $\phi_m = S_{S,m}$ and $\phi_f = S_{S,f} \varepsilon^{-1}$. \\

In the limit $\varepsilon \to 0$, we expect the solution to converge towards Effective model I with the one-dimensional Richards' equation governing the flow in the fracture. \\
Figure \ref{fig:model_effective.png} depicts the pressure head and the saturation of the effective model at final time $t = T$. One observes that the pressure head in the lower left matrix block and in the lower fracture has risen, whereas the pressure head in the right matrix block has little increased. Due to the different parametrisations of the hydraulic quantities, the fracture is less saturated than the surrounding matrix blocks regardless of the pressure continuity at the fracture. \\
Figure \ref{fig:errors} shows that the averages of the pressure head across the fracture width only slightly differ from the fracture solution of the effective model, even for large $\varepsilon$. For $\varepsilon \leq 10^{-2}$, the $L^2$ errors in all subdomains lie below $10^{-4}$, and for $\varepsilon \leq 10^{-3}$, the $L^2$ errors are smaller than $10^{-5}$, which equals the tolerance of our non-linear solver. Whereas the errors in the matrix blocks are larger than the error in the fracture for wide fractures, the errors in the matrix blocks converge faster towards zero for vanishing fracture width.
%%%%%%%%%%%%% Figure %%%%%%%%%%%%%%%
\begin{figure}[!h]
\begin{center}
\includegraphics[scale=0.3]{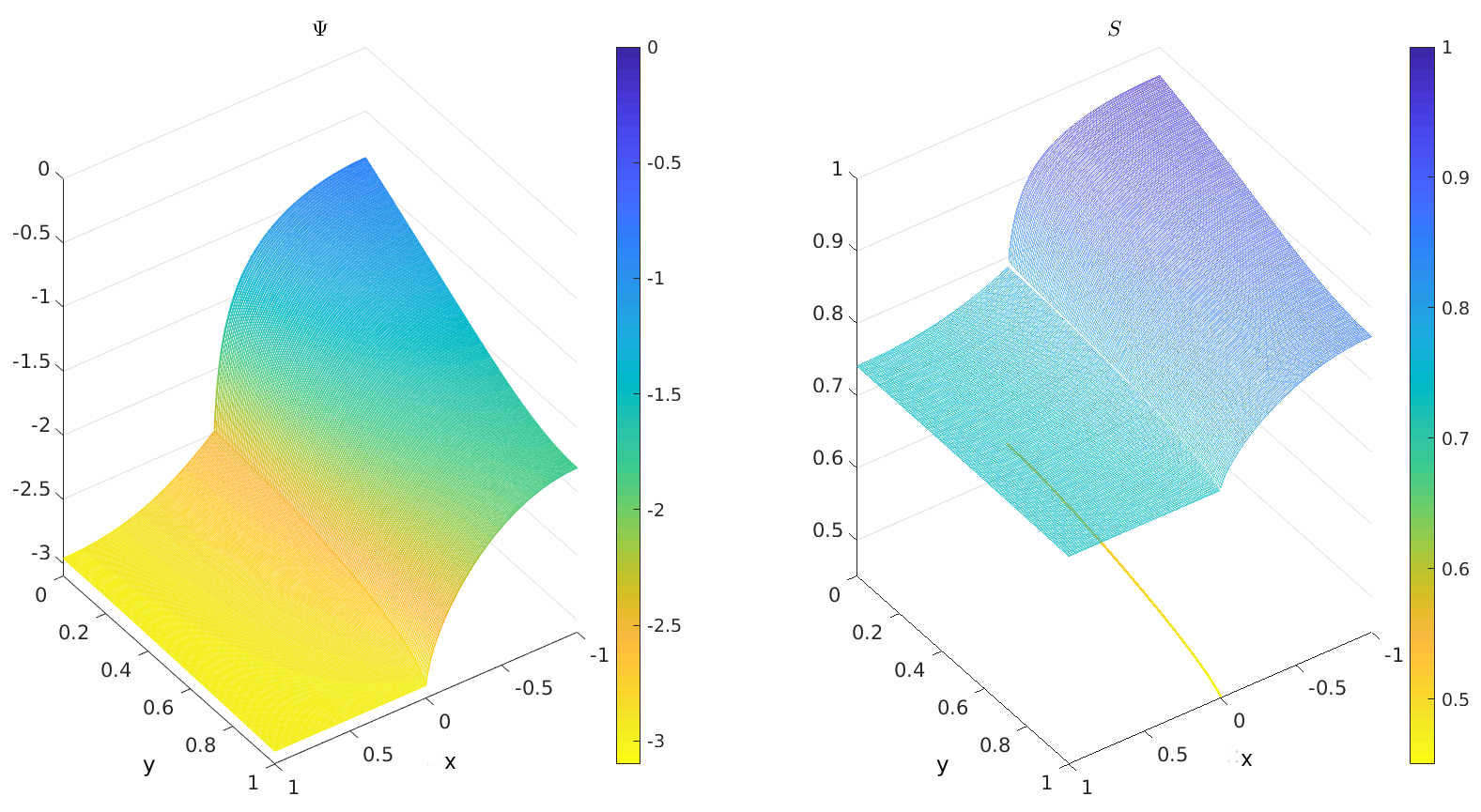}
\end{center}
\caption{Solution to the effective model at $t = 0.18$, pressure head (left) and saturation (right)}
\label{fig:model_effective.png}
\end{figure}
%%%%%%%%%%%%%%%%%%%%%%%%%%%%%%%%%%%%
%%%%%%%%%%%%% Figure %%%%%%%%%%%%%%%
\begin{figure}[!h]
\begin{center}
%\begin{minipage}{0.49\textwidth}
\includegraphics[scale=0.2]{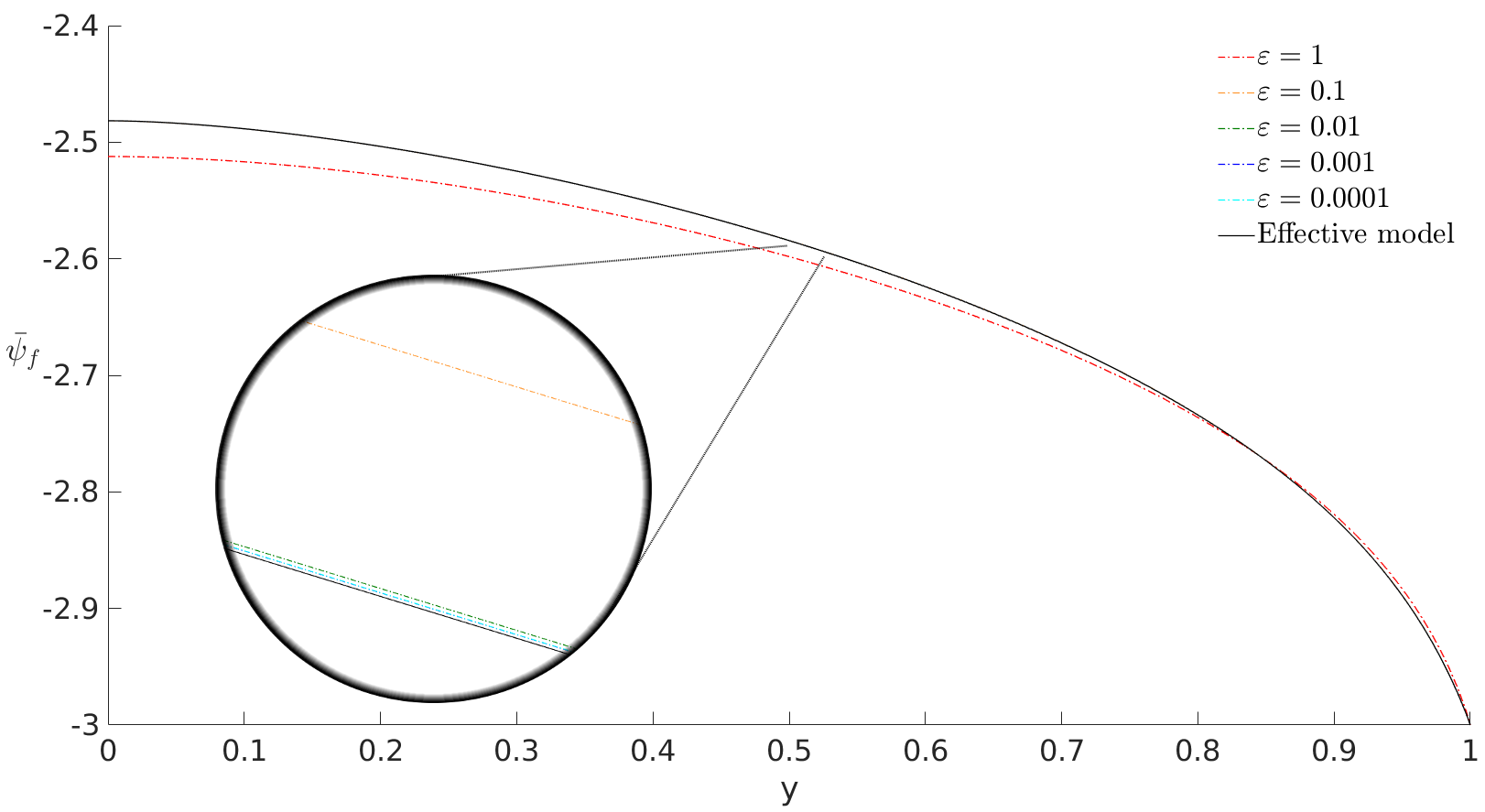}\\
%\end{minipage}
%\begin{minipage}{0.49\textwidth}
\hspace{0.0cm}
\includegraphics[scale=0.2]{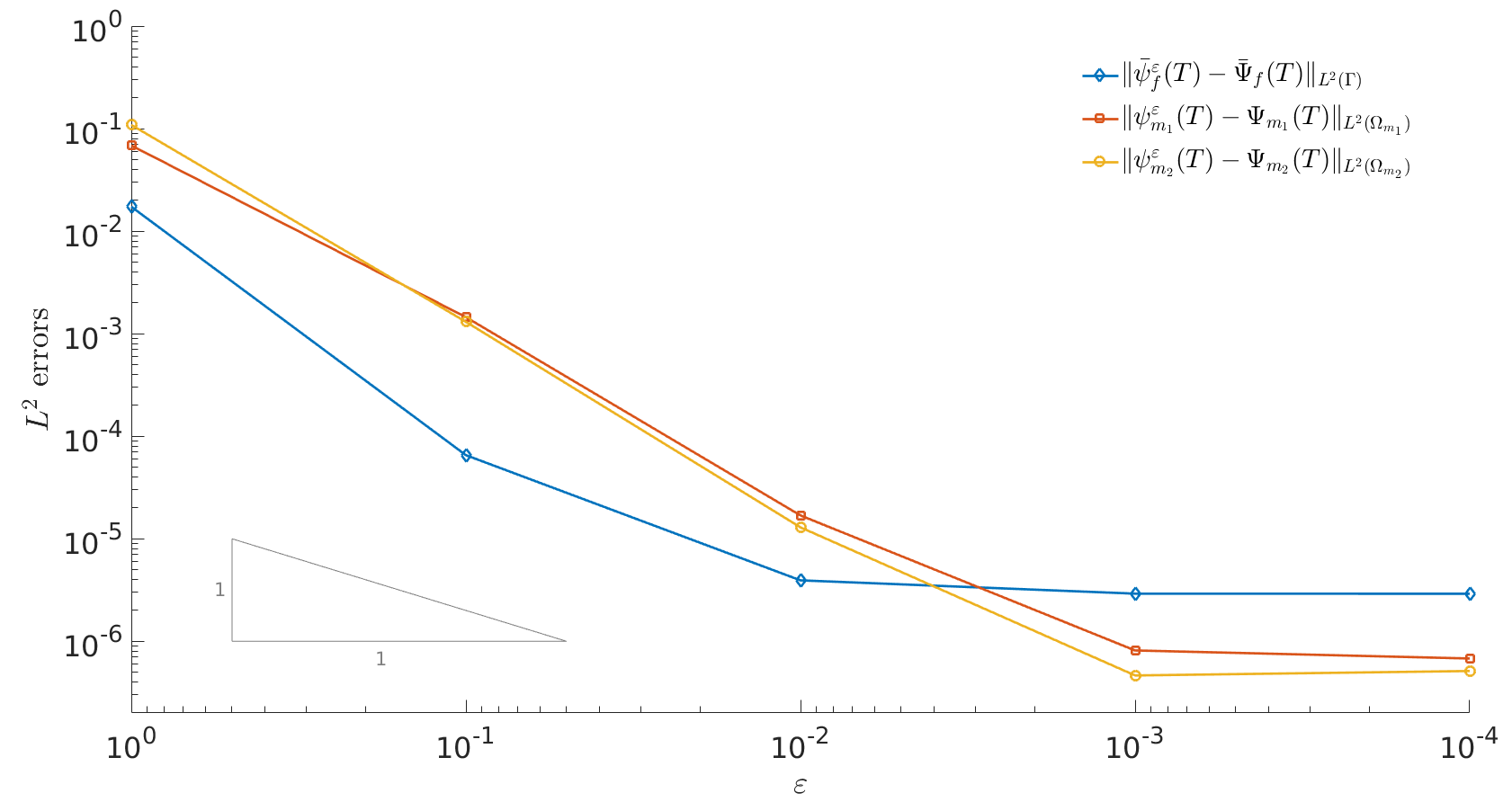}
%\end{minipage}
\end{center}
\caption{$x$-averaged fracture solution along the fracture for different $\varepsilon$ and for the effective model at $t = T$ (left), $L^2$ error in the fracture and the matrix blocks for different $\varepsilon$ at $t = T$ (right)}
\label{fig:errors}
\end{figure}
\\

\section{Conclusion}
We have developed effective equations for replacing a fracture by an interface for Richards' equation. The starting geometry is a fracture of small thickness $\varepsilon$ in a porous medium. The effective models are derived as the limit of $\varepsilon \rightarrow 0$.  The ratios of porosity and absolute permeability of the fracture and the porous matrix are characterized by $\varepsilon^\kappa$ and $\varepsilon^\lambda$, respectively. The effective equations depend on the two parameters $\kappa$ and $\lambda$ and we cover the cases $\kappa > -1, \lambda < 1$. The numerical examples show that the upscaled models approximate the $\varepsilon$ problem in a satisfactory manner.  Further exploration of the numerical tests for the different upscaled models will be carried out elsewhere. 

\section{Acknowledgements}
The work of K. Kumar and F. A. Radu was partially supported by the Research Council of Norway through the projects Lab2Field no. 811716, IMMENS no. 255426, CHI  no. 25510 and Norwegian Academy of Science and Statoil through VISTA AdaSim no. 6367. I. S. Pop was supported by the Research Foundation-Flanders (FWO) through the Odysseus programme (project GOG1316N) and by Statoil through the Akademia agreement. 

\end{document}